\newcommand{\secfont}{\sc}
\renewcommand{\section}{\refstepcounter{section}\vskip0.5em\goodbreak%
\secdef\mysecA\mysecB}
\newcommand{\mysecA}[2][default]{\begin{center}{\secfont \thesection.~#2}
\end{center}
\addcontentsline{toc}{subsection}{\protect%
      \numberline{\thesection. }#2}
}
\newcommand{\mysecB}[1]{\addtocounter{section}{-1}%
\begin{center}{\secfont #1}\end{center}\vskip1em\nobreak}
\DeclareMathAlphabet{\fcal}{FMS}{futm}{m}{n}
\newcommand{\thmfont}{\sc}
\newtheorem{theorem}[subsection]{\thmfont Theorem}
\newtheorem{definition}[subsection]{\thmfont Definition}
\newtheorem{lemma}[subsection]{\thmfont Lemma}
\newtheorem{proposition}[subsection]{\thmfont Proposition}
\newtheorem{PreExamples}[subsection]{\thmfont Examples}
\newtheorem{PreExample}[subsection]{\thmfon Example}
\newtheorem{PreRemarks}[subsection]{\thmfont Remarks}
\newtheorem{PreRemark}[subsection]{\thmfont Remark}
{\begin{PreExamples}\rm}{\end{PreExamples}}
{\begin{PreExample}\rm}{\end{PreExample}}
{\begin{PreRemarks}\rm}{\end{PreRemarks}}
\newenvironment{remark}%
{\begin{PreRemark}\rm}{\end{PreRemark}}
\newtheorem{PreState}[subsection]{\thmfont\statetype}
{\def\statetype{#1} \begin{PreState}\rm}{\end{PreState}}
\newcommand{\carre}{{\vrule height5pt width5pt depth0pt}}
\newenvironment{proof}%
{\noindent{\it Proof} --\nobreak}{\hfill\carre\vskip1em}
\newenvironment{proofof}[1]%
{\noindent{\it #1} -- \nobreak}{\hfill\carre\vskip1em}
\newcommand{\highlightfont}{\em}
\newcommand{\highlight}[1]{{\highlightfont #1}}
\newcommand{\transpose}[1]{{\vphantom{#1}}^{\mathit t}{#1}}
\def\Id{\mathop{\rm Id{}}\nolimits}
\newcommand{\tildeC}{\widetilde{\mathcal{C}}}
\newcommand{\tildeF}{\widetilde{\mathcal{F}}}
\newcommand{\tildesigma}{\widetilde{\sigma}}
\newcommand{\tildetau}{\widetilde{\tau}}
\newcommand{\barC}{\overline{\mathcal{C}}}
\newcommand{\mathbfit}[1]{\textbf{\textit{#1}}}
\newcommand{\mathbfsc}[1]{\textbf{\textsc{#1}}}
\newcommand{\pfont}[1]{\textbf{\textsf{\textit{#1}}}}
\newcommand{\pS}{\pfont{S}}
\newcommand{\pT}{\pfont{T}}
\newcommand{\pA}{\pfont{A}}
\newcommand{\pM}{\pfont{M}}
\begin{document}

\title{On compositions associated to Frobenius parabolic and seaweed subalgebras of $\mathrm{sl}_{n}(\Bbbk )$}
\author{Michel Duflo and Rupert W.T. Yu\footnote{Research supported by the ANR Project GERCHER Grant 
number ANR-2010-BLAN-110-02.}}

\maketitle

\begin{abstract}
By using a free monoid of operators on the set of compositions (resp. pairs of compositions), 
we establish in this paper a bijective correspondence between Frobenius standard parabolic (resp. seaweed) subalgebras 
and certain elements of this monoid. We prove via this correspondence a conjecture of one of the authors on the number of 
Frobenius standard parabolic  (resp. seaweed) subalgebras of $\mathrm{sl}_{n}(\Bbbk )$ associated to compositions 
(resp. pairs of compositions) with $n-t$ parts (resp. parts in total).
\end{abstract}

\section{Introduction}

Throughout this paper, $\Bbbk$ is an algebraically closed field of characteristic zero.

Let  $n\in \mathbb{N}^{*}$, and let us denote by
$$
\mathcal{C}_{n} = \{ (a_{1},\dots ,a_{r}) \ ; \ r \in \mathbb{N}^{*} \ , \ a_{1} , \dots ,a_{r} \in \mathbb{N}^{*} \hbox{ and } 
a_{1}+ \cdots + a_{r}=n \}
$$
the set of compositions of $n$.

Seaweed subalgebras of $\mathrm{sl}_{n}(\Bbbk)$ were introduced by Dergachev and Kirillov
\cite{DK} to study the index of parabolic subalgebras of $\mathrm{sl}_{n}(\Bbbk)$ \cite{E1}. Like parabolic
subalgebras which can be put in a standard form as block upper triangular matrices of trace zero parametrized
by compositions of $n$, seaweed subalgebras can be put in a standard form parametrized by pairs of 
compositions of $n$. More precisely, the standard seaweed subalgebra of $\mathrm{sl}_{n}(\Bbbk)$ associated 
to the pair $\mathbfsc{a} = (\mathbf{a}^{+}, \mathbf{a}^{-}) \in \mathcal{C}_{n}\times \mathcal{C}_{n}$ is 
$$
\mathfrak{p}_{\mathbfsc{a}} = \mathfrak{p}_{\mathbf{a}^{+}} \cap \transpose{\mathfrak{p}_{\mathbf{a}^{-}}}
$$
where $\mathfrak{p}_{\mathbf{a}^{\pm}}$ denotes the set of block upper triangular matrices of trace zero associated
to $\mathfrak{a}^{\pm}$, and $\transpose{\mathfrak{p}_{\mathbf{a}^{-}}}$ the set of transpose of elements of
$\mathfrak{p}_{\mathbf{a}^{-}}$. For example, for $n=7$, $\mathbf{a}^{+}=(2,3,2)$ and $\mathbf{a}^{-} = (4,3)$, we
have 
$$
\mathfrak{p}_{\mathbf{a}^{+}}= 
\hbox{\scriptsize$
\setlength{\arraycolsep}{1pt}
\renewcommand{\arraystretch}{0.75}
\left(
\begin{array}{ccccccc}
*& * & * & * & * & * & * \\
* & * & * & * & * & * & * \\
0 & 0 & * & * & * & * & * \\
0 & 0 & * & * & * & * & * \\
0 & 0 & * & * & * & * & * \\
0 & 0 & 0 & 0 & 0 & * & * \\
0 & 0 & 0 & 0 & 0 & * & * 
\end{array}
\right)$
}
\cap \mathrm{sl}_{n}(\Bbbk )
\ , \
\mathfrak{p}_{\mathbf{a}^{-}}= 
\hbox{\scriptsize$
\setlength{\arraycolsep}{1pt}
\renewcommand{\arraystretch}{0.75}
\left(
\begin{array}{ccccccc}
*& * & * & * & * & * & * \\
* & * & * & * & * & * & * \\
* & * & * & * & * & * & * \\
* & * & * & * & * & * & * \\
0 & 0 & 0 & 0 & * & * & * \\
0 & 0 & 0 & 0 & * & * & * \\
0 & 0 & 0 & 0 & * & * & * 
\end{array}
\right)$
}
\cap \mathrm{sl}_{n}(\Bbbk )
\ \hbox{ and } \
\mathfrak{p}_{\mathbfsc{a}}= 
\hbox{\scriptsize$
\setlength{\arraycolsep}{1pt}
\renewcommand{\arraystretch}{0.75}
\left(
\begin{array}{ccccccc}
*& * & * & * & 0 & 0 & 0 \\
* & * & * & * & 0 & 0 & 0 \\
0 & 0 & * & * & 0 & 0 & 0 \\
0 & 0 & * & * & 0 & 0 & 0 \\
0 & 0 & * & * & * & * & * \\
0 & 0 & 0 & 0 & 0 & * & * \\
0 & 0 & 0 & 0 & 0 & * & * 
\end{array}
\right)$
}
\cap \mathrm{sl}_{n}(\Bbbk ).
$$
Observe that $\mathfrak{p}_{(\mathbf{a},(n))} = \mathfrak{p}_{\mathbf{a}}$, so 
standard parabolic subalgebras are standard seaweed subalgebras.

Recall from \cite{TYbook} that the index of a Lie algebra $\mathfrak{g}$ is the integer
$$
\mathrm{ind}(\mathfrak{g}) = \min \{ \dim \mathfrak{g}^{f} \, ;\,  f\in \mathfrak{g}^{*}\}
$$ 
where $\mathfrak{g}^{f}$ denotes the annihilator of $f$ in $\mathfrak{g}$ under the coadjoint action.
In \cite{DK}, the authors associate to a seaweed subalgebra $\mathfrak{p}_{\mathbfsc{a}}$, 
a graph called {\it meander graph},  and expressed the index of $\mathfrak{p}_{\mathbfsc{a}}$
in terms of the type of connected components of this graph (which are cycles, segments and points).
When the meander graph of $\mathfrak{p}_{\mathbfsc{a}}$ is a single segment, its index is zero.
Consequently, we obtain many new examples of index zero Lie algebras, also called Frobenius Lie algebras, 
which are objects of interest in representation theory (see for example \cite{E2, O}).

For $\mathbf{a}=(a_{1},\dots ,a_{r}) \in \mathcal{C}_{n}$, let us denote by $p (\mathbf{a}) = r$
the number of parts of the composition $\mathbf{a}$.  The first author computed the number $F_{n,p}$ of 
Frobenius standard parabolic subalgebras $\mathfrak{p}_{\mathbf{a}}$ in $\mathrm{sl}_{n}(\Bbbk )$ with a fixed 
number of parts $p$ up to $n=25$, and he observed that $F_{n,n-t}$ appears to have a nice asymptotic behaviour 
in $n$ for a fixed parity of $n$. He conjectured that $F_{2n,2n-t}$ and $F_{2n+1,2n+1-t}$ are polynomial
in $n$ for large $n$. 

Similar computations for standard seaweed subalgebras in $\mathrm{sl}_{n}(\Bbbk )$ lead to a similar conjecture (without
the condition on the parity of $n$)  on the number $\widetilde{F}_{n,p}$ of Frobenius standard seaweed subalgebras 
$\mathfrak{p}_{\mathbfsc{a}}$ in $\mathrm{sl}_{n}(\Bbbk )$ such that $\mathbfsc{a}=(\mathbf{a}^{+},\mathbf{a}^{-})$ 
and $p(\mathbf{a}^{+}) + p(\mathbf{a}^{-} )=p$. 

In this paper, we prove these conjectures and provide the degree of these polynomials. The main results are:

\begin{theorem}
Let $\varepsilon \in \{ 0 , 1\}$ and $t\in \mathbb{N}$. 
\begin{itemize}
\item[a)] There exists $P_{\varepsilon, t} \in \mathbb{Q}[T]$ of degree $\left[ \frac{t}{2} \right]$ with positive dominant
coefficient such that 
$$
F_{2n+\varepsilon , n+1 -t} = P_{\varepsilon,t} (n)
$$
for large $n$.
\item[b)] There exists $P_{t}  \in \mathbb{Q}[T]$ of degree $\left[ \frac{t}{2} \right]$ 
with positive dominant coefficient such that 
$$
\widetilde{F}_{n , n+1 -t} = P_{t} (n)
$$
for large $n$.
\end{itemize}
\end{theorem}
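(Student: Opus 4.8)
The plan is to combine the meander-graph criterion of Dergachev--Kirillov with the free-monoid bijection announced in the abstract, and then to turn the enumeration into a generating-function computation in which polynomiality and the degree $[t/2]$ emerge from a single dominant pole.

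First I would record the criterion in usable form: by \cite{DK}, $\mathfrak{p}_{\mathbfsc{a}}$ is Frobenius exactly when its meander graph is a single segment, i.e. a path through all of the vertices (there are $N=n$ of them in part b) and $N=2n+\varepsilon$ in part a)). Since each vertex has degree at most two, such a graph is a single path iff it is connected with exactly $N-1$ edges. The edge count is immediate: a composition contributes $\sum_i\lfloor a_i/2\rfloor=(N-e)/2$ arcs, where $e$ is its number of odd parts, so a seaweed has $N-\tfrac12(e^{+}+e^{-})$ edges, and the requirement $N-1$ forces $e^{+}+e^{-}=2$. This single identity explains the shape of the theorem: for a seaweed it is parity-free, giving one polynomial $P_t$ in b); for a parabolic the second composition is the block $(N)$, whose number of odd parts is $0$ or $1$ according to the parity of $N$, so the constraint becomes $e^{+}=2$ for $N$ even and $e^{+}=1$ for $N$ odd --- exactly the split recorded by $\varepsilon$ in a). In all cases the remaining content is connectivity.

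Next I would translate the counts into weighted words. Write $p$ for the number of parts and $E$ for the excess, with $E=2N-2p$ for a seaweed (whose parts sum to $2N$) and $E=N-2p$ for a parabolic; then the hypotheses behind $\widetilde{F}_{n,n+1-t}$ and $F_{2n+\varepsilon,n+1-t}$ become $E=2t-2$ and $E=2t-2+\varepsilon$ respectively, a constant once $t$ and $\varepsilon$ are fixed. Under the bijection of the paper a Frobenius configuration is a word in a free monoid, and I would isolate the distinguished generator that inserts a \emph{generic} part of size $2$: it adds $2$ to the size and $1$ to the number of parts, so that $E$ is preserved, and by freeness it may be repeated arbitrarily. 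Hence every Frobenius configuration of the relevant type arises uniquely from a \emph{reduced} configuration --- one carrying no removable generic part --- by distributing $m$ copies of the generic generator (a number growing linearly in $n$) among a finite set of insertion sites. The number of reduced configurations is finite, their size being bounded in terms of $E$ and hence of $t$; and for a reduced configuration with $s$ sites the number of admissible insertions is $\binom{m+s-1}{s-1}$, a polynomial in $m$, hence in $n$, of degree $s-1$. Summing these finitely many polynomials shows that both $F_{2n+\varepsilon,n+1-t}$ and $\widetilde{F}_{n,n+1-t}$ agree with a polynomial in $n$ for large $n$. The degree is the maximum of $s-1$ over reduced configurations, and positivity of the dominant coefficient is then automatic, each $\binom{m+s-1}{s-1}$ having positive leading term; for the degenerate value $t=0$ one checks separately that the alternating composition is Frobenius, so the constant polynomial is positive.

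The hard part will be the determination of this maximum, namely that it equals $[t/2]$, together with the verification that the free-monoid factorization really is ``reduced core plus repeated generic letters''. Concretely I expect the difficulty to lie in showing that inserting a generic $2$-block at an admissible site preserves the single-path property and acts additively on the two statistics, so that the factorization is genuinely free and nothing is overcounted; and then in an extremal analysis of which reduced Frobenius pieces of excess $E$ admit the most insertion slots under connectivity. The naive budget $E=2t-2$ would permit on the order of $t-1$ independent sites, but I expect the single-path constraint to force admissible sites to occur essentially in pairs, halving the count to $[t/2]$ --- it is exactly this interaction of connectivity with the excess budget that should yield the degree $[t/2]$ rather than $t-1$. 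The parabolic/seaweed distinction and the parity bookkeeping by $\varepsilon$ should then follow uniformly from the edge-count identity $e^{+}+e^{-}=2$ established at the outset.
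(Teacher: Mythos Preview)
Your high-level strategy matches the paper's: both reduce the count to ``core data plus repeated generic insertions'', where the generic insertions are strings of operators adding parts of size $2$ (the paper's $\zeta$-words $\zeta_k^{\pm}$ in the seaweed case, $\tilde\sigma_0^k$ in the parabolic case), and the core data is what the paper calls the $\Upsilon$-sequence. The binomial $\binom{m+s-1}{s-1}$ you anticipate is exactly the cardinality of the paper's set $\mathcal{K}_n$. Your edge-count identity $e^{+}+e^{-}=2$ is correct and attractive, though the paper does not use it; the parity split in part a) arises there instead from the fact that all monoid generators change $s(\mathbf{a})$ by an even amount.

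The genuine gap is the part you flag yourself: you do not prove that the maximal number of insertion sites is $[t/2]+1$. Your heuristic that ``admissible sites occur in pairs'' is not quite the mechanism. What actually happens in the paper is this: after the first decomposition separating out the generic $\zeta$-strings, each intermediate core piece $w_i$ must begin with a $\tau_0^{\pm}$-letter (Lemma~\ref{(1)-step}), contributing $1$ to $\tau(w)$; and after the second decomposition, each intermediate core $v_i$ also forces some entry $m_j>1$ in the $w$-sequence (Lemma~\ref{(1,1,1)-step}), contributing $1$ to $\beta(\mathbf{m})$. Since the basic inequality gives $\tau(w)+\beta(\mathbf{m})\le t$, these two contributions together yield $2(s-1)\le t$, hence $s\le[t/2]+1$. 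This is not a pairing of sites but a double cost per separator, and it requires the precise operator calculus rather than naive insertion of $2$'s (which, as you can check, does not preserve the Frobenius property at arbitrary positions: $(1,2,1)$ is not Frobenius though $(1,1)$ is). You also need an explicit construction realising $s=[t/2]+1$, which the paper provides via the word $v=\tau_0^{-}\sigma_0^{+}$ (resp.\ $\tilde\tau_0\tilde\sigma_0\sigma_0$); without it the degree claim is only an upper bound.
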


The main ingredients in proving these results are certain ``index preserving'' operators on the 
set of all compositions (resp. all pairs of compositions). These are the inverses of the reduction 
operations obtained from the inductive formulas of Panyushev in \cite[Theorem 4.3]{Pan}. The 
monoid generated by these operators is free, and we obtain a bijection between the set of 
compositions (resp. pairs of compositions) corresponding to Frobenius parabolic (resp.
seaweed) subalgebras and a subset of this monoid. The determination of $F_{2n+\varepsilon , n+1 -t}$
and $\widetilde{F}_{n , n+1 -t}$ then becomes a word problem in this monoid, which is solved in a purely 
combinatorial way. 

These operators are known to Elashvili \cite{E3} who used them to compute the number of Frobenius
standard seaweed subalgebras in $\mathrm{sl}_{n}(\Bbbk )$. In \cite{CMW}, the authors used a much larger family 
of index preserving operators (on meander graphs) to generate all seaweed subalgebras.  
The operators used here optimize in a certain way the generation of Frobenius parabolic and seaweed 
subalgebras. Note that it is possible to generate all seaweed subalgebras by adding left concatenation operators 
to these operators.

The paper is organized as follows. In Section 2, we define the index preserving operators for pairs
of compositions. We prove that the monoid they generate is free, and establish the bijection between 
the set of pairs of compositions corresponding to Frobenius seaweed subalgebras and a subset of this
monoid. Section 3 investigates certain combinatorial properties of elements of the monoid. These properties
are used in Section 4 to determine $\widetilde{F}_{n,n+1-t}$. Section 5 treats the case of Frobenius
parabolic subalgebras.

We shall conserve the notations introduced in this section throughout this paper. 
\paragraph{Acknowledgements.}
The authors would like to thank Alexander Elashvili for many interesting discussions on Frobenius seaweed
subalgebras.

\section{Index preserving operators on the set of pairs of compositions} 

Set 
$$
\mathcal{C} = \bigcup_{n\in\mathbb{N}^{*}} \mathcal{C}_{n}  \ , \
\tildeC_{n} = \mathcal{C}_{n} \times \mathcal{C}_{n}  \ \hbox{ and } \
\tildeC = \bigcup_{n\in \mathbb{N}^{*}} \tildeC_{n}.
$$
For $\mathbf{a}=(a_{1},\dots ,a_{r}) \in \mathcal{C}$, we set 
$$
p (\mathbf{a}) = r \ , \ s (\mathbf{a}) = a_{1}+\cdots + a_{r} = n 
$$
respectively the number of parts and the sum of the composition.

For $\mathbfsc{a}\in \tildeC_{n}$, we shall write $\mathbfsc{a} = (\mathbf{a}^{+},\mathbf{a}^{-})$ where 
$$
\mathbf{a}^{+} = (a_{1}^{+},\dots ,a_{p(\mathbf{a}^{+})}^{+})
\ \hbox{ and } \ 
\mathbf{a}^{-} = (a_{1}^{-},\dots ,a_{p(\mathbf{a}^{-})}^{-}) 
\in \mathcal{C}_{n}.
$$
We set 
$$
\mathbfit{p} (\mathbfsc{a}) = \bigl( p(\mathbf{a}^{+}) , p(\mathbf{a}^{-}) \bigr) \ , \
p(\mathbfsc{a}) = p(\mathbf{a}^{+})+p(\mathbf{a}^{-}) \ , \
s(\mathbfsc{a}) = s(\mathbf{a}^{\pm}) = n.
$$

For convenience, we shall add a \highlight{null} element denoted by $\mathbfsc{o}$, to obtain 
$\barC = \tildeC \cup \{ \mathbfsc{o} \}$. 
By convention, we set $\mathbfit{p}(\mathbfsc{o}) = (0,0)$ and $s(\mathbfsc{o})=0=p(\mathbfsc{o})$.

Let $m\in\mathbb{N}$.
We define operators $\rho$, $\sigma^{\pm}_{m}$ and $\tau_{m}^{\pm}$ on $\barC$ as follows.
First,
$$
\rho (\mathbfsc{a}) =\left\{
\begin{array}{ll} 
(\mathbf{a}^{-} , \mathbf{a}^{+}) &  \hbox{if } \mathbfsc{a} \in\tildeC, \\
\mathbfsc{o} & \hbox{if }  \mathbfsc{a} =\mathbfsc{o}.
\end{array}
\right.
$$
Next, 
$$
\sigma_{m}^{+} (\mathbfsc{a}) = \left\{
\begin{array}{ll}
\mathbfsc{b} = (\mathbf{b}^{+},\mathbf{b}^{-}) &  \hbox{if } \mathbfsc{a} \in\tildeC, \\
\mathbfsc{o} & \hbox{if }  \mathbfsc{a} =\mathbfsc{o},
\end{array}
\right.
$$
where
$$
\mathbf{b}^{+} = \bigl( (m+2) a_{1}^{+} , a_{2}^{+} , \dots , a_{p(\mathbf{a}^{+})}^{+} \bigr) \ \hbox{ and } \  
\mathbf{b}^{-} = \bigl( (m+1)a_{1}^{+}, a_{1}^{-},\dots ,a_{p(\mathbf{a}^{-})}^{-} \bigr).
$$
Thirdly,
$$
\tau_{m}^{+} (\mathbfsc{a}) = \left\{
\begin{array}{ll}
\mathbfsc{b} = (\mathbf{b}^{+},\mathbf{b}^{-}) &  \hbox{if } \mathbfsc{a} \in\tildeC \hbox{ and } p(\mathbf{a}^{+}) > 1, \\
\mathbfsc{o} & \hbox{if }  \mathbfsc{a}=\mathbfsc{o} \hbox{ or } p(\mathbf{a}^{+})=1,
\end{array}
\right.
$$
where
$$
\mathbf{b}^{+} = \bigl( (m+1) a_{1}^{+} + (m+2) a_{2}^{+} , a_{3}^{+} , \dots , a_{p(\mathbf{a}^{+})}^{+} \bigr)  \ \hbox{ and } \  
\mathbf{b}^{-} = \bigl( m a_{1}^{+} + (m+1)a_{2}^{+} , a_{1}^{-},\dots ,a_{p(\mathbf{a}^{-})}^{-} \bigr).
$$
Finally, we define the compositions
$$
\sigma_{m}^{-} = \rho \, \sigma_{m}^{+} \, \rho \ \hbox{ and } \
\tau_{m}^{-} = \rho \, \tau_{m}^{+} \, \rho
$$
which are analogues by exchanging the roles of the two compositions.

We observe immediately from the definitions the following equalities for $\mathbfsc{a}\in \tildeC$.
\begin{itemize}
\item[{\bf (Ob1)}] We have
$$
\begin{array}{c}
\mathbfit{p}  \bigl( \rho (\mathbfsc{a} ) \bigr) = \mathbfit{p}(\mathbfsc{a}) \ , \ 
\mathbfit{p}  \bigl(  \sigma_{m}^{+} ( \mathbfsc{a}) \bigr) = \mathbfit{p}(\mathbfsc{a}) + ( 0 , 1 )  \ , \
\mathbfit{p}  \bigl(  \sigma_{m}^{-} ( \mathbfsc{a}) \bigr) = \mathbfit{p}(\mathbfsc{a}) + ( 1 ,0 )    \ , \\ [0.3em]
\mathbfit{p}  \bigl( \tau_{m}^{+}  (\mathbfsc{a})  \bigr) =   \left\{
\begin{array}{ll}
\mathbfit{p}(\mathbfsc{a}) + ( -1 , 1 ) & \hbox{if } p(\mathbf{a}^{+}) > 1, \\
(0,0) & \hbox{if } p(\mathbf{a}^{+}) = 1 , 
\end{array}\right. \\ [1em]
\mathbfit{p}  \bigl( \tau_{m}^{-}  (\mathbfsc{a})  \bigr) =   \left\{
\begin{array}{ll}
\mathbfit{p}(\mathbfsc{a}) + ( 1 , -1 ) & \hbox{if } p(\mathbf{a}^{-}) > 1, \\
(0,0) & \hbox{if } p(\mathbf{a}^{-}) = 1.
\end{array}\right.
\end{array}
$$
In particular, 
$$
p \bigl( \rho (\mathbfsc{a} ) \bigr) = p (\mathbfsc{a})  \ , \
p \bigl(  \sigma_{m}^{\pm } ( \mathbfsc{a}) \bigr) = p  (\mathbfsc{a}) + 1\ , \
p \bigl( \tau_{m}^{\pm }  (\mathbfsc{a})  \bigr) = p (\mathbfsc{a}) .
$$
\item[{\bf (Ob2)}] We have 
$$
s \bigl( \rho ( \mathbfsc{a}) \bigr)= s (\mathbfsc{a} ) \ , \
s \bigl( \sigma_{m}^{\pm} ( \mathbfsc{a}) \bigr)= s (\mathbfsc{a} ) + (m+1)a_{1}^{\pm} \ , \
s \bigl( \tau_{m}^{\pm} ( \mathbfsc{a}) \bigr)= s (\mathbfsc{a} ) + ma_{1}^{\pm} + (m+1)a_{2}^{\pm}.
$$
\end{itemize}

Let us denote by $\fcal{S}^{\pm}$ the set of $\sigma_{m}^{\pm}$ where $m\in\mathbb{N}$, 
$\fcal{T}^{\pm}$ the set of $\tau_{m}^{\pm}$ where $m\in\mathbb{N}$,
and 
$$
\fcal{S}= \fcal{S}^{+} \cup \fcal{S}^{-} \ , \ \fcal{T}= \fcal{T}^{+} \cup \fcal{T}^{-} \ \hbox{ and } \
\fcal{A} = \fcal{S} \cup \fcal{T}.
$$
We set  $\fcal{M}$ to be the submonoid generated by $\fcal{A}$ in the set of maps from 
$\barC$ to itself.

\begin{definition}
Let $\mathbfsc{a}\in\tildeC$ and $w\in \fcal{M}$. We say that $w$ is \highlight{$\mathbfsc{a}$-null} if
$w(\mathbfsc{a})=\mathbfsc{o}$. We denote by
$$
\fcal{M}_{\mathbfsc{a}} = \{ w \in \fcal{M}  \ ; \ w (\mathbfsc{a}) \neq \mathbfsc{o} \}
$$ 
the set of elements of $\fcal{M}$ which are not $\mathbfsc{a}$-null.
\end{definition}

From the definitions of the operators, we obtain that if $w=w'w'' \in \fcal{M}_{\mathbfsc{a}}$, then
$w'' \in \fcal{M}_{\mathbfsc{a}}$ and $w' \in \fcal{M}_{w''(\mathbfsc{a})}$.

\begin{proposition}\label{indexpreserving}
Let $\mathbfsc{a}\in\tildeC$ and $w\in \fcal{M}_{\mathbfsc{a}}$. Then the index of the standard
seaweed subalgebras associated to $\mathbfsc{a}$ and $w(\mathbfsc{a})$ are the same.
\end{proposition}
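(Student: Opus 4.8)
The plan is to reduce the statement to the generators of $\fcal{M}$, eliminate the operators carrying a minus sign by the symmetry $\rho$, and then identify each of $\sigma_m^+$ and $\tau_m^+$ with the inverse of an index-preserving reduction of Panyushev.

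First I would reduce to a single generator. Write $w=g_{1}g_{2}\cdots g_{k}$ with each $g_{i}\in\fcal{A}$. Since $w\in\fcal{M}_{\mathbfsc{a}}$, the observation recorded right after the definition of $\fcal{M}_{\mathbfsc{a}}$ shows that every suffix $g_{j}g_{j+1}\cdots g_{k}$ again lies in $\fcal{M}_{\mathbfsc{a}}$. Hence, setting $\mathbfsc{a}_{k+1}=\mathbfsc{a}$ and $\mathbfsc{a}_{j}=g_{j}(\mathbfsc{a}_{j+1})$ for $j=k,\dots,1$, all the intermediate pairs $\mathbfsc{a}_{j}=g_{j}\cdots g_{k}(\mathbfsc{a})$ belong to $\tildeC$ (they are never $\mathbfsc{o}$), and $\mathbfsc{a}_{1}=w(\mathbfsc{a})$. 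Arguing by induction on $k$, it therefore suffices to prove: for every $g\in\fcal{A}$ and every $\mathbfsc{a}\in\tildeC$ with $g(\mathbfsc{a})\neq\mathbfsc{o}$, the seaweed subalgebras $\mathfrak{p}_{\mathbfsc{a}}$ and $\mathfrak{p}_{g(\mathbfsc{a})}$ have the same index.

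Next I would dispose of the minus-sign operators using $\rho$. The map $\phi\colon X\mapsto-\transpose{X}$ is a Lie algebra automorphism of $\mathrm{sl}_{n}(\Bbbk)$, and from $\mathfrak{p}_{\mathbfsc{a}}=\mathfrak{p}_{\mathbf{a}^{+}}\cap\transpose{\mathfrak{p}_{\mathbf{a}^{-}}}$ one computes $\phi(\mathfrak{p}_{\mathbfsc{a}})=\transpose{(\mathfrak{p}_{\mathbfsc{a}})}=\transpose{\mathfrak{p}_{\mathbf{a}^{+}}}\cap\mathfrak{p}_{\mathbf{a}^{-}}=\mathfrak{p}_{\rho(\mathbfsc{a})}$. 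Thus $\mathfrak{p}_{\rho(\mathbfsc{a})}$ is isomorphic to $\mathfrak{p}_{\mathbfsc{a}}$, so $\rho$ preserves the index. Since $\sigma_{m}^{-}=\rho\,\sigma_{m}^{+}\,\rho$ and $\tau_{m}^{-}=\rho\,\tau_{m}^{+}\,\rho$, applying this three times reduces the problem to the two families $g=\sigma_{m}^{+}$ and $g=\tau_{m}^{+}$.

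For these two families I would invoke Panyushev's inductive formulas \cite[Theorem~4.3]{Pan}, which express the index of a standard seaweed subalgebra through reduction moves on the pair of leftmost blocks that strictly decrease $s$ while leaving the index unchanged. The aim is to check that $\sigma_{m}^{+}$ and $\tau_{m}^{+}$ are exactly inverse to these moves. Writing $\mathbfsc{b}=\sigma_{m}^{+}(\mathbfsc{a})$, so that $b_{1}^{+}=(m+2)a_{1}^{+}$ and $b_{1}^{-}=(m+1)a_{1}^{+}$, and similarly the analogous expressions for $\tau_{m}^{+}$, I would verify by direct comparison of the defining formulas, tracking parts and sums with (Ob1) and (Ob2), that applying Panyushev's reduction to $\mathbfsc{b}$ a suitable number of times (the number depending on $m$) returns $\mathbfsc{a}$. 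As each reduction step preserves the index, the equality $\mathrm{ind}\,\mathfrak{p}_{\mathbfsc{a}}=\mathrm{ind}\,\mathfrak{p}_{g(\mathbfsc{a})}$ follows, completing the induction. The excluded boundary case $p(\mathbf{a}^{+})=1$ for $\tau_{m}^{+}$ costs nothing, since there the operator already takes the value $\mathbfsc{o}$.

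The main obstacle is this last matching: the closed-form operators carry the free integer parameter $m$, which records how many elementary reductions must be unwound, and the bookkeeping is delicate. One must use that the leftmost blocks of $\mathbfsc{b}$ are forced into the consecutive-multiple pattern $b_{1}^{+}=(m+2)a_{1}^{+}$, $b_{1}^{-}=(m+1)a_{1}^{+}$ (respectively the analogue for $\tau_{m}^{+}$), and check that the reductions neither create nor destroy cycles in the meander graph, so that the index in $\mathrm{sl}_{n}(\Bbbk)$, and not merely in $\mathrm{gl}_{n}(\Bbbk)$, is literally preserved at each step.
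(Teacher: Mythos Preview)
Your proposal is correct and follows the same overall strategy as the paper: reduce to a single generator $g\in\fcal{A}$, then identify each generator with the inverse of an index-preserving reduction from \cite[Theorem~4.3]{Pan}. Two small differences are worth noting. First, your use of $\rho$ via the automorphism $X\mapsto -\transpose{X}$ to pass from the minus operators to the plus operators is a clean explicit argument; the paper instead handles both signs at once by observing that Panyushev's reduction already comes in two symmetric versions according to whether $b_{1}^{+}<b_{1}^{-}$ or $b_{1}^{+}>b_{1}^{-}$. Second, and more usefully for you, the version of Panyushev's theorem the paper quotes already carries the integer parameter $m$, determined by $\tfrac{m}{m+1}b_{1}^{-}<b_{1}^{+}\leqslant\tfrac{m+1}{m+2}b_{1}^{-}$, so each of $\sigma_{m}^{\pm}$ and $\tau_{m}^{\pm}$ is the exact inverse of a \emph{single} such reduction step (the two cases corresponding to whether $(m+1)b_{1}^{-}=(m+2)b_{1}^{+}$ or not). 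Your anticipated iteration ``a suitable number of times depending on $m$'' is therefore unnecessary, and the bookkeeping you flag as delicate in the last paragraph disappears entirely.
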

\begin{proof}
It suffices to prove the proposition for $w \in \fcal{M}_{\mathbfsc{a}} \cap \fcal{A}$.

Recall the following result from \cite[Theorem 4.3]{Pan}: let $\mathbfsc{b}\in \tildeC$ 
be such that $b_{1}^{+} < b_{1}^{-}$, and $m,r\in\mathbb{N}$ be the unique integers such that 
$b_{1}^{+} = ( b_{1}^{-}-b_{1}^{+} )m + r$ and $0<r\leqslant b_{1}^{-}-b_{1}^{+}$. In particular, 
$m$ is the unique integer such that 
$$
\frac{m}{m+1}b_{1}^{-}  < b_{1}^{+} \leqslant \frac{m+1}{m+2} b_{1}^{-}.
$$
Then 
$
\mathrm{ind}( \mathfrak{p}_{\mathbfsc{b}} )
= \mathrm{ind}( \mathfrak{p}_{\mathbfsc{c}} )
$
where 
$$
\mathbf{c}^{+} = \bigl( b_{2}^{+} , \dots , b_{p(\mathbf{b}^{+})}^{+}  \bigr)
\ , \ 
\mathbf{c}^{-} = \bigl( (m+1)b_{1}^{-}  - (m+2) b_{1}^{+} , (m+1)b_{1}^{+} - m b_{1}^{-} , b_{2}^{-}, \dots ,
b_{p(\mathbf{b}^{-})}^{-}
\bigr)
$$
where the first entry in $\mathbf{c}^{-}$ is void if $(m+1)b_{1}^{-}  = (m+2) b_{1}^{+}$.

The operators $\sigma_{m}^{-}$ and $\tau_{m}^{-}$ are just the inverse of this operation 
according to the case whether we have $(m+1) b_{1}^{-} = (m+2)b_{1}^{+}$ or not. 

The operators $\sigma_{m}^{+}$ and $\tau_{m}^{+}$ correspond to the case where 
$b_{1}^{+} > b_{1}^{-}$.
\end{proof}

\begin{lemma}\label{uniqueness}
Let $\mathbfsc{a},\mathbfsc{b}\in \tildeC$ and $\eta,\eta' \in \fcal{A}$ be
such that $\eta (\mathbfsc{a}) =\eta'(\mathbfsc{b}) \neq \mathbfsc{o}$. Then $\eta=\eta'$ and $\mathbfsc{a}=\mathbfsc{b}$.
\end{lemma}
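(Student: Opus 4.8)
The plan is to show that the whole datum $(\eta,\mathbfsc{a})$ can be reconstructed from the single pair of compositions $\mathbfsc{c}=\eta(\mathbfsc{a})$; this gives the lemma at once, since $\eta(\mathbfsc{a})=\eta'(\mathbfsc{b})=\mathbfsc{c}\neq\mathbfsc{o}$ forces both sides to decode to the same $(\eta,\mathbfsc{a})$. Writing $u=c_{1}^{+}$ and $v=c_{1}^{-}$, I would first record from the definitions how each generator acts on first entries: for $\eta=\sigma_{m}^{+}$ one gets $u=(m+2)a_{1}^{+}$, $v=(m+1)a_{1}^{+}$, hence $u-v=a_{1}^{+}\geq 1$; for $\eta=\tau_{m}^{+}$ one gets $u=(m+1)a_{1}^{+}+(m+2)a_{2}^{+}$, $v=ma_{1}^{+}+(m+1)a_{2}^{+}$, hence $u-v=a_{1}^{+}+a_{2}^{+}\geq 2$. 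In both $+$ cases $u>v$; since $\sigma_{m}^{-}=\rho\,\sigma_{m}^{+}\,\rho$, $\tau_{m}^{-}=\rho\,\tau_{m}^{+}\,\rho$ and $\rho^{2}=\Id$, the two $-$ generators satisfy $v>u$. As $u=v$ never occurs in an image, comparing $c_{1}^{+}$ and $c_{1}^{-}$ determines the sign of $\eta$, and applying $\rho$ to $\mathbfsc{c}$, $\mathbfsc{a}$, $\mathbfsc{b}$ (using that $\rho$ intertwines the $+$ and $-$ generators) reduces everything to the case $\eta,\eta'\in\fcal{S}^{+}\cup\fcal{T}^{+}$, i.e.\ $u>v$.

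Next, setting $d=u-v>0$, the key point is that divisibility of $v$ by $d$ separates $\fcal{S}^{+}$ from $\fcal{T}^{+}$. For $\sigma_{m}^{+}$ one has $d=a_{1}^{+}$ and $v=(m+1)a_{1}^{+}$, so $d\mid v$ with $v/d=m+1$. For $\tau_{m}^{+}$ one has $d=a_{1}^{+}+a_{2}^{+}$ and $v=md+a_{2}^{+}$ with $0<a_{2}^{+}<d$, so $d\nmid v$, and the Euclidean division of $v$ by $d$ returns quotient $m$ and remainder $a_{2}^{+}$. The conditions $d\mid v$ and $d\nmid v$ being complementary, the type of $\eta$ is read off from $\mathbfsc{c}$ alone.

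Finally I would recover $m$ and the preimage in each case. If $d\mid v$, then $\eta=\sigma_{m}^{+}$ with $m=v/d-1$ (and $v\geq d$ forces $m\geq 0$) and $a_{1}^{+}=d$, the remaining entries being read off by $a_{i}^{+}=c_{i}^{+}$ for $i\geq 2$ and $a_{j}^{-}=c_{j+1}^{-}$ for $j\geq 1$. If $d\nmid v$, then $\eta=\tau_{m}^{+}$ with $m=\lfloor v/d\rfloor$, $a_{2}^{+}=v-md$, $a_{1}^{+}=d-a_{2}^{+}$ (both $\geq 1$, matching the constraint $p(\mathbf{a}^{+})>1$), together with $a_{i}^{+}=c_{i-1}^{+}$ for $i\geq 3$ and $a_{j}^{-}=c_{j+1}^{-}$ for $j\geq 1$. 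Each quantity is forced, so the decoding is unique and the lemma follows. I expect the only delicate part to be the bookkeeping in the middle step: checking that $\tau_{m}^{+}$ genuinely yields a non-divisible $v$ and that the inequalities $0<a_{2}^{+}<d$ and $a_{1}^{+}\geq 1$ persist in all boundary cases (notably $m=0$), so that the divisibility dichotomy is clean and the reconstruction never leaves $\tildeC$.
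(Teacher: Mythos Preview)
Your argument is correct. Both you and the paper begin by observing that the superscript sign of $\eta$ is determined by the sign of $c_{1}^{+}-c_{1}^{-}$, reducing to the $+$ case. From there, however, the strategies diverge. The paper proceeds by a symmetric case analysis on the pair $(\eta,\eta')$: for $(\sigma_{p}^{+},\sigma_{q}^{+})$ and $(\tau_{p}^{+},\tau_{q}^{+})$ it compares the two expressions for $c_{1}^{+}$ and $c_{1}^{-}$ directly, and for the mixed pair $(\sigma_{p}^{+},\tau_{q}^{+})$ it derives $a_{1}^{+}=b_{1}^{+}+b_{2}^{+}$, then inverts the unimodular matrix $\left(\begin{smallmatrix} q+1 & q+2\\ q & q+1\end{smallmatrix}\right)$ to show $a_{1}^{+}$ divides both $b_{1}^{+}$ and $b_{2}^{+}$, a contradiction. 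Your approach is instead an explicit decoding: you observe that with $d=c_{1}^{+}-c_{1}^{-}$ one has $d\mid c_{1}^{-}$ exactly for $\sigma_{m}^{+}$ (since $c_{1}^{-}=(m+1)d$) and $d\nmid c_{1}^{-}$ exactly for $\tau_{m}^{+}$ (since $c_{1}^{-}=md+a_{2}^{+}$ with $0<a_{2}^{+}<d$), and then read off $m$ and the preimage from the Euclidean division of $c_{1}^{-}$ by $d$. This is essentially the Panyushev reduction step (used in the paper's Proposition~\ref{indexpreserving}) run forwards as an explicit inverse, and it handles the mixed case and the equal-type cases uniformly rather than one pair at a time. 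Your route is a bit more constructive and avoids the $\mathrm{SL}_{2}(\mathbb{Z})$ trick; the paper's route is slightly more hands-on but makes the individual cases transparent.
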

\begin{proof}
Let $\mathbfsc{c} = \eta (\mathbfsc{a}) = \eta'(\mathbfsc{b})$.

First, observe that if $\eta \in \fcal{S}^{+} \cup \fcal{T}^{+}$, then $c_{1}^{+} > c_{1}^{-}$, while
if $\eta' \in \fcal{S}^{-} \cup \fcal{T}^{-}$, then $c_{1}^{+} < c_{1}^{-}$. So the only possibilities
for $(\eta ,\eta')$ are 
$$
(\sigma_{p}^{+}, \tau_{q}^{+})  \ ,Ê\  (\tau_{p}^{+}, \sigma_{q}^{+}) \ , \ (\sigma_{p}^{+}, \sigma_{q}^{+}) \ , \
(\tau_{p}^{+}, \tau_{q}^{+}) 
$$
with $p,q\in\mathbb{N}$, and the same pairs with superscript $-$ instead of $+$. By symmetry, it suffices to prove
the lemma for the superscript $+$. 

Suppose that $(\eta ,\eta') = (\sigma_{p}^{+}, \tau_{q}^{+})$. Then  $p(\mathbf{b}^{+}) \geqslant 2$, and 
$$
c_{1}^{+} = (p+2)a_{1}^{+} = (q+1)b_{1}^{+} + (q+2)b_{2}^{+} \ , \
c_{1}^{-} = (p+1)a_{1}^{+} = q b_{1}^{+} + (q+1)b_{2}^{+}.
$$
By looking at the difference, we deduce that $a_{1}^{+} = b_{1}^{+} + b_{2}^{+}$. The case
$(\eta ,\eta') =(\tau_{p}^{+}, \sigma_{q}^{+})$ is analogue.

On the other hand, these equalities give
$$
\left(\begin{array}{cc}
q+1 & q+2 \\
q & q+1
\end{array}
\right)
\left(\begin{array}{c}
b_{1}^{+} \\
b_{2}^{+}
\end{array}
\right)
=
\left(\begin{array}{c}
(p+2)a_{1}^{+} \\
(p+1)a_{1}^{+}
\end{array}
\right)
\hbox{ and } 
\left(\begin{array}{cc}
q+1 & q+2 \\
q & q+1
\end{array}
\right)
\in \mathrm{SL}_{2}(\mathbb{Z}).
$$
So $a_{1}^{+}$ divides both $b_{1}^{+}$ and $b_{2}^{+}$ which is absurd because 
$a_{1}^{+} = b_{1}^{+} + b_{2}^{+}$.

Suppose that $(\eta ,\eta') =(\sigma_{p}^{+}, \sigma_{q}^{+})$. Then $p(\mathbf{a}^{\pm}) = p(\mathbf{b}^{\pm})$, and 
we have $a_{i}^{\varepsilon} = b_{i}^{\varepsilon}$ for all $(i,\varepsilon)\neq (1,+)$,
$$
(p+2)a_{1}^{+} = (q+2)b_{1}^{+}  \ , \ (p+1)a_{1}^{+} = (q+1)b_{1}^{+}.
$$
Again by considering the difference, we obtain that $a_{1}^{+}=b_{1}^{+}$, and hence $p=q$.

Suppose that $(\eta ,\eta') =(\tau_{p}^{+}, \tau_{q}^{+})$. Then $p(\mathbf{a}^{\pm}) = p(\mathbf{b}^{\pm})$, and 
we have $a_{i}^{\varepsilon} = b_{i}^{\varepsilon}$ for all $(i,\varepsilon)\not \in \{ (1,+), (2,+)\}$,
$$
(p+1)a_{1}^{+} + (p+2) a_{2}^{+} = (q+1)b_{1}^{+} + (q+2)b_{2}^{+} \ , \
p a_{1}^{+} + (p+1)a_{2}^{+} = q b_{1}^{+} + (q+1)b_{2}^{+}.
$$ 
The difference gives $a_{1}^{+} + a_{2}^{+} = b_{1}^{+} + b_{2}^{+}= m$, hence
$$
p m + a_{2}^{+} = p a_{1}^{+} + (p+1)a_{2}^{+} = q b_{1}^{+} + (q+1)b_{2}^{+} = qm+b_{2}^{+}.
$$
By the uniqueness of Euclidean division, we have $p=q$ and $a_{2}^{+} = b_{2}^{+}$, which 
in turn implies that $a_{1}^{+} = b_{1}^{+}$. So the proof is complete.
\end{proof}

\begin{proposition}\label{freemonoid}
\begin{itemize}
\item[a)] The monoid $\fcal{M}$ is free in the alphabet $\fcal{A}$. 
\item[b)] For $\mathbfsc{a}\in \tildeC$, the map 
$$
\mathrm{ev}_{\mathbfsc{a}} : \fcal{M}_{\mathbfsc{a}} \longrightarrow \tildeC \ , \ w \mapsto w(\mathbfsc{a}) \ ,
$$
is injective.
\item[c)] Let $\mathbfsc{a}, \mathbfsc{b} \in \tildeC$ be such that $\mathbfsc{b}\not\in 
\mathrm{im} (\mathrm{ev}_{\mathbfsc{a}})$.  Then $\mathrm{im}( \mathrm{ev}_{\mathbfsc{a}}) \cap 
\mathrm{im} (\mathrm{ev}_{\mathbfsc{b}})= \emptyset$.  
\end{itemize}
\end{proposition}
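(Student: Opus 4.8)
The plan is to deduce all three parts from Lemma~\ref{uniqueness} by induction on word length, treating the free-monoid statement and the injectivity of $\mathrm{ev}_{\mathbfsc{a}}$ together, since both express the same kind of unique-factorization phenomenon. First I would set up the combined statement: for $\mathbfsc{a}\in\tildeC$ and words $w=\eta_{k}\cdots\eta_{1}$, $w'=\eta'_{\ell}\cdots\eta'_{1}$ in $\fcal{M}_{\mathbfsc{a}}$ (with each $\eta_{i},\eta'_{j}\in\fcal{A}$), I claim that $w(\mathbfsc{a})=w'(\mathbfsc{a})\neq\mathbfsc{o}$ forces $k=\ell$ and $\eta_{i}=\eta'_{i}$ for all $i$. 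The key observation I would exploit is the remark recorded just after Definition, that any suffix of a non-$\mathbfsc{a}$-null word is again in $\fcal{M}_{\mathbfsc{a}}$ and the remaining prefix lies in $\fcal{M}_{w''(\mathbfsc{a})}$; this lets the induction proceed one letter at a time from the outside in.

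For part~b), the inductive step is exactly Lemma~\ref{uniqueness}: writing $w=\eta_{k}\,w''$ and $w'=\eta'_{\ell}\,w'''$ with $w''=\eta_{k-1}\cdots\eta_{1}$ and $w'''=\eta'_{\ell-1}\cdots\eta'_{1}$, I set $\mathbfsc{c}=w(\mathbfsc{a})=w'(\mathbfsc{a})$, $\mathbfsc{d}=w''(\mathbfsc{a})$ and $\mathbfsc{e}=w'''(\mathbfsc{a})$, both nonzero by the suffix remark. Then $\eta_{k}(\mathbfsc{d})=\eta'_{\ell}(\mathbfsc{e})=\mathbfsc{c}\neq\mathbfsc{o}$, so Lemma~\ref{uniqueness} gives $\eta_{k}=\eta'_{\ell}$ and $\mathbfsc{d}=\mathbfsc{e}$, i.e.\ $w''(\mathbfsc{a})=w'''(\mathbfsc{a})$; applying the induction hypothesis to the shorter words $w'',w'''\in\fcal{M}_{\mathbfsc{a}}$ finishes the step. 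The base case is when one word is empty: if $w(\mathbfsc{a})=\mathbfsc{a}=w'(\mathbfsc{a})$ with $w$ empty but $w'=\eta'_{\ell}w'''$ nonempty, I rule this out by a strict monotonicity argument—every generator in $\fcal{A}$ strictly increases the invariant $p$ (for $\sigma^{\pm}_{m}$, by (Ob1)) or, when $p$ is preserved by a $\tau$, strictly increases $s$ (by (Ob2), since $a_{1}^{\pm}\geqslant 1$ forces a strictly positive increment), so no nonempty word can fix $\mathbfsc{a}$. This same monotonicity shows that the evaluation map is well defined as a bijection onto its image and that length is recoverable, which I would note explicitly.

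Part~a) is then a formal consequence: freeness of $\fcal{M}$ means that the natural surjection from the free monoid on the alphabet $\fcal{A}$ onto $\fcal{M}$ is injective, i.e.\ if two words represent the same operator on $\barC$ then they are letter-by-letter equal. Since an operator equality $w=w'$ in $\fcal{M}$ in particular gives $w(\mathbfsc{a})=w'(\mathbfsc{a})$ for every $\mathbfsc{a}\in\tildeC$, I would pick any single $\mathbfsc{a}$ on which both are non-null—e.g.\ $\mathbfsc{a}=((1),(1))$, checking that every generator keeps this fixed point out of $\mathbfsc{o}$—and invoke the injectivity already established. Strictly, I should first confirm that equal operators share the same domain of non-nullity so that such an $\mathbfsc{a}$ exists; this is immediate since the generators never map a seaweed composition to $\mathbfsc{o}$ except through the $\tau$ parts-one degeneration, which $((1),(1))$ avoids after the first letter by the monotonicity above.

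For part~c), the plan is a clean contrapositive using unique factorization. Suppose $\mathbfsc{c}\in\mathrm{im}(\mathrm{ev}_{\mathbfsc{a}})\cap\mathrm{im}(\mathrm{ev}_{\mathbfsc{b}})$, say $\mathbfsc{c}=w(\mathbfsc{a})=w'(\mathbfsc{b})$ with $w\in\fcal{M}_{\mathbfsc{a}}$, $w'\in\fcal{M}_{\mathbfsc{b}}$. I would argue by induction on $\min$ of the lengths of $w,w'$ that $\mathbfsc{b}\in\mathrm{im}(\mathrm{ev}_{\mathbfsc{a}})$, contradicting the hypothesis. If $w'$ is empty then $\mathbfsc{b}=\mathbfsc{c}=w(\mathbfsc{a})\in\mathrm{im}(\mathrm{ev}_{\mathbfsc{a}})$, done. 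Otherwise both words are nonempty; peeling off the outermost letters and applying Lemma~\ref{uniqueness} to $\eta_{k}(w''(\mathbfsc{a}))=\eta'_{\ell}(w'''(\mathbfsc{b}))=\mathbfsc{c}$ gives $\eta_{k}=\eta'_{\ell}$ and $w''(\mathbfsc{a})=w'''(\mathbfsc{b})$, reducing to strictly shorter words and the same conclusion by induction. I expect the only real subtlety—and the main place to be careful—to be the base/degenerate cases of the induction, namely verifying that the chosen evaluation points are never sent to $\mathbfsc{o}$ and that the suffix remark genuinely keeps all intermediate images nonzero; the monotonicity of $p$ and $s$ under $\fcal{A}$ is what makes these degeneracies impossible once the first letter is applied, so I would state that monotonicity as a standalone observation early and reuse it throughout.
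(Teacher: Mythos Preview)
Your approach for parts a) and b) is exactly what the paper has in mind when it says ``direct consequences of Lemma~\ref{uniqueness}'': peel off the leftmost letter, apply the lemma, and induct. Two points deserve correction, however.

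First, in part~a) your test element $\mathbfsc{a}=\bigl((1),(1)\bigr)$ does not work: $\tau_{m}^{\pm}\bigl((1),(1)\bigr)=\mathbfsc{o}$, so any word whose rightmost letter lies in $\fcal{T}$ is $\mathbfsc{i}$-null, and your claim that ``$((1),(1))$ avoids'' the $\tau$ degeneration ``after the first letter'' is false precisely at the first letter. The fix is easy and in the spirit of your own monotonicity remark: given words $w,w'$ of lengths $k,\ell$, choose any $\mathbfsc{a}$ with $p(\mathbf{a}^{+}),p(\mathbf{a}^{-})>\max(k,\ell)$; by (Ob1) each $\tau^{\pm}$ decreases the relevant component of $\mathbfit{p}$ by at most $1$, so every suffix stays non-null and part~b) applies.

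Second, your induction for part~c) treats only the base case ``$w'$ empty''. The other base case, $w$ empty with $w'$ nonempty, yields $\mathbfsc{a}=w'(\mathbfsc{b})\in\mathrm{im}(\mathrm{ev}_{\mathbfsc{b}})$, which does \emph{not} give $\mathbfsc{b}\in\mathrm{im}(\mathrm{ev}_{\mathbfsc{a}})$; indeed $s(\mathbfsc{b})<s(\mathbfsc{a})$ makes that impossible. Concretely, take $\mathbfsc{b}=\mathbfsc{i}$ and $\mathbfsc{a}=\sigma_{0}^{+}(\mathbfsc{i})=\bigl((2),(1,1)\bigr)$: then $\mathbfsc{b}\notin\mathrm{im}(\mathrm{ev}_{\mathbfsc{a}})$ by the $s$-monotonicity, yet $\mathbfsc{a}$ lies in both images. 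So the literal statement of c) is too strong; what your argument (and Lemma~\ref{uniqueness}) actually yields is the symmetric version: if the images meet then one of $\mathbfsc{a},\mathbfsc{b}$ lies in the image of the other, equivalently $\mathrm{im}(\mathrm{ev}_{\mathbfsc{a}})$ and $\mathrm{im}(\mathrm{ev}_{\mathbfsc{b}})$ are either nested or disjoint. Your inductive peeling argument proves exactly this once you allow either base case. The paper's one-line proof glosses over this asymmetry; since c) is not used elsewhere, the intended content is surely the laminar/nested statement, and you should phrase your conclusion accordingly.
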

\begin{proof}
These are direct consequences of Lemma \ref{uniqueness}.
\end{proof}

\begin{remark}\label{meander}
It is a basic meander graph observation that if $\mathfrak{p}_{\mathbfsc{a}}$ is Frobenius, then
$a_{1}^{+}=a_{1}^{-}$ if and only if $\mathbfsc{a}=\bigl( (1),(1) \bigr)$. We can also recover this
from \cite[Theorem 4.2]{Pan} or \cite{TY, TYbook}.
\end{remark}

Let us denote by $\tildeF$ the set of elements of $\tildeC$ corresponding to Frobenius standard seaweed
subalgebras. Let $\mathbfsc{i} = \bigl( (1),(1) \bigr) \in \tildeF$.

\begin{theorem}\label{frobeniusbijection}
The map $\mathrm{ev}_{\mathbfsc{i}}$ defines a bijection from $\fcal{M}_{\mathbfsc{i}}$
to $\tildeF$.
\end{theorem}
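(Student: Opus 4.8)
The plan is to establish three facts: that $\mathrm{ev}_{\mathbfsc{i}}$ actually takes its values in $\tildeF$, that it is injective, and that it is surjective onto $\tildeF$; of these, well-definedness and injectivity are short, and surjectivity carries the weight. For well-definedness, let $w\in\fcal{M}_{\mathbfsc{i}}$; then $w(\mathbfsc{i})\in\tildeC$ by the very definition of $\fcal{M}_{\mathbfsc{i}}$, and since $\mathbfsc{i}\in\tildeF$ the seaweed $\mathfrak{p}_{\mathbfsc{i}}$ has index zero, so Proposition~\ref{indexpreserving} forces $\mathfrak{p}_{w(\mathbfsc{i})}$ to have index zero as well, i.e.\ $w(\mathbfsc{i})\in\tildeF$. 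Injectivity is then nothing more than Proposition~\ref{freemonoid}(b) applied with $\mathbfsc{a}=\mathbfsc{i}$.

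For surjectivity I would argue by strong induction on $s(\mathbfsc{a})$, over all $\mathbfsc{a}\in\tildeF$. The base case is $\mathbfsc{a}=\mathbfsc{i}$, which equals $\mathrm{ev}_{\mathbfsc{i}}(e)$ for $e$ the identity of $\fcal{M}$ (the empty word), and $e\in\fcal{M}_{\mathbfsc{i}}$ since $e(\mathbfsc{i})=\mathbfsc{i}\neq\mathbfsc{o}$. Now take $\mathbfsc{a}\in\tildeF$ with $\mathbfsc{a}\neq\mathbfsc{i}$. By Remark~\ref{meander} we have $a_1^+\neq a_1^-$; assume $a_1^+>a_1^-$, the opposite inequality being symmetric under the exchange of the two compositions effected by $\rho$ (note that $\sigma_m^-,\tau_m^-\in\fcal{A}$). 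Since $a_1^-<a_1^+\leqslant s(\mathbfsc{a})$ we have $a_1^-<s(\mathbfsc{a})$, hence $p(\mathbf{a}^-)\geqslant 2$. The proof of Proposition~\ref{indexpreserving} identifies $\sigma_m^+$ and $\tau_m^+$ as the inverses of Panyushev's reduction in exactly the situation $a_1^+>a_1^-$; applying that reduction to $\mathbfsc{a}$ therefore produces a pair $\mathbfsc{a}'\in\tildeC$ together with a letter $\eta\in\{\sigma_m^+,\tau_m^+\}\subset\fcal{A}$ (a $\sigma$ or a $\tau$ according to whether the relevant boundary equality holds) such that $\eta(\mathbfsc{a}')=\mathbfsc{a}$.

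It then remains to feed $\mathbfsc{a}'$ into the induction hypothesis. Since $\eta(\mathbfsc{a}')=\mathbfsc{a}\neq\mathbfsc{o}$ we have $\eta\in\fcal{M}_{\mathbfsc{a}'}\cap\fcal{A}$, so Proposition~\ref{indexpreserving} gives $\mathrm{ind}(\mathfrak{p}_{\mathbfsc{a}'})=\mathrm{ind}(\mathfrak{p}_{\mathbfsc{a}})=0$ and hence $\mathbfsc{a}'\in\tildeF$; moreover {\bf (Ob2)} shows that every letter of $\fcal{A}$ increases $s$ by a strictly positive amount, so $s(\mathbfsc{a}')<s(\mathbfsc{a})$. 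By the induction hypothesis $\mathbfsc{a}'=w'(\mathbfsc{i})$ for some $w'\in\fcal{M}_{\mathbfsc{i}}$, and therefore $\mathbfsc{a}=\eta\bigl(w'(\mathbfsc{i})\bigr)=(\eta w')(\mathbfsc{i})$, where $\eta w'\in\fcal{M}_{\mathbfsc{i}}$ because its value at $\mathbfsc{i}$ is $\mathbfsc{a}\neq\mathbfsc{o}$. This exhibits $\mathbfsc{a}$ in the image of $\mathrm{ev}_{\mathbfsc{i}}$ and closes the induction.

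I expect the delicate point to be the inductive step, specifically the claim that for a Frobenius seaweed with $a_1^+>a_1^-$ the Panyushev reduction produces a genuine element $\mathbfsc{a}'\in\tildeC$ (all parts positive, and the smaller side nonempty, which is why $p(\mathbf{a}^-)\geqslant 2$ was needed) that is the preimage of $\mathbfsc{a}$ under a single letter of $\fcal{A}$. This is precisely the identification of $\sigma_m^\pm,\tau_m^\pm$ with the inverses of Panyushev's reduction made in the proof of Proposition~\ref{indexpreserving}, combined with Remark~\ref{meander} (which rules out $a_1^+=a_1^-$ and so guarantees the reduction is applicable) and {\bf (Ob2)} (which guarantees termination of the induction). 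Everything else is bookkeeping within the free monoid $\fcal{M}$.
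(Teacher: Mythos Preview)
Your proof is correct and follows essentially the same route as the paper's: the image lies in $\tildeF$ by Proposition~\ref{indexpreserving}, injectivity is Proposition~\ref{freemonoid}(b), and surjectivity is obtained by repeatedly inverting Panyushev's reduction, with termination guaranteed because $s$ strictly decreases. Your version is in fact more careful than the paper's, since you explicitly check that the reduction is applicable (via Remark~\ref{meander} and the observation $p(\mathbf{a}^-)\geqslant 2$) and frame the descent as a strong induction rather than an informal iteration.
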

\begin{proof}
By Proposition \ref{indexpreserving}, we have $\mathrm{im}( \mathrm{ev}_{\mathbfsc{i}})\subset \tildeF$, 
and so the map $\mathrm{ev}_{\mathbfsc{i}}$ is injective by Proposition \ref{freemonoid}.

Let $\mathbfsc{a}\in \tildeF\setminus \{\mathbfsc{i}\}$. Then $a_{1}^{+}\neq a_{1}^{-}$ by Remark \ref{meander}. 
By the result of Panyushev stated in the proof of Proposition \ref{indexpreserving}, there exists $\mathbfsc{b}\in \tildeF$
and $\eta \in \fcal{A}$ such that $\eta (\mathbfsc{b}) = \mathbfsc{a}$. Since $s( \mathbfsc{b} ) < s (\mathbfsc{a})$,
by iterating the process, we arrive at $\mathbfsc{i}$, and the result follows.
\end{proof}

We shall denote by $\iota$ the identity element in $\fcal{M}$. 

Since $\fcal{M}$ is free in the alphabet $\fcal{A}$, if $w= \eta_{1}\cdots \eta_{k}\in\fcal{M}\setminus \{ \iota\}$  with
$\eta_{1},\dots, \eta_{k}\in\fcal{A}$, then the integers 
$$
\begin{array}{c}
\ell (w) = k \ , \ \sigma^{\pm} (w) = \sharp\{\ i \ ;\  \eta_{i}\in\fcal{S}^{\pm} \} \ , \ 
\sigma(w) =\sigma^{+}(w)+\sigma^{-}(w) \ , \\ [0.3em]
\tau^{\pm}  (w)= \sharp\{ \ i \ ; \ \eta_{i}\in\fcal{T}^{\pm} \} \ , \
\tau (w) =\tau^{+}(w)+\tau^{-}(w) 
\end{array}
$$
are well-defined.  By convention, we define these integers to be zero when $w=\iota$.

\begin{itemize}
\item[{\bf (Ob3)}] It follows immediately from (Ob1) that  for any $\mathbfsc{a}\in\tildeC$ and $w\in \fcal{M}_{\mathbfsc{a}}$, 
we have 
$$
\begin{array}{c}
\ell (w) = \sigma (w)+ \tau (w) \ , \ 
p\bigl(w(\mathbfsc{a}) \bigr) =  p(\mathbfsc{a}) + \sigma (w) \ ,  \\ [0.3em]
\mathbfit{p} \bigl(w(\mathbfsc{a}) \bigr) =  \mathbfit{p}(\mathbfsc{a}) 
+ \bigl( \sigma^{-}(w)+\tau^{-}(w) - \tau^{+}(w) \  , \  \sigma^{+}(w)+\tau^{+}(w) - \tau^{-}(w)\bigr).
\end{array}
$$ 
\end{itemize}

\section{Properties of these operators relative to $w$-sequences}

Let $\mathbfsc{a}\in \tildeC$ and $w = \eta_{1}\cdots \eta_{k} \in \fcal{M}_{\mathbfsc{a}}\setminus \{ \iota \}$ with
$\eta_{1},\dots, \eta_{k}\in\fcal{A}$. Set $m_{k} = s\bigl( \eta_{k}(\mathbfsc{a}) \bigr) - s(\mathbfsc{a})$, and
for $1\leqslant i\leqslant k-1$, we set
$$
m_{i} = s\bigl( ( \eta_{i} \cdots \eta_{k} )(\mathbfsc{a}) \bigr) - s\bigl( (\eta_{i+1} \cdots \eta_{k} )(\mathbfsc{a}) \bigr).
$$ 
The sequence $\mathbf{m} = (m_{1},\dots ,m_{k})$ will be called the \highlight{$w$-sequence of $\mathbfsc{a}$}.
It codes the progression of the sum of the composition of the successive application of $\eta_{i}$ to $\mathbfsc{a}$.
By convention, the $\iota$-sequence of $\mathbfsc{a}$ is empty.

\begin{itemize}
\item[{\bf (Ob4)}] For all $1\leqslant i\leqslant k$, we have $m_{i} \geqslant 1$, and 
$$
\ell (w) + \beta (\mathbf{m}) \leqslant m_{1} + \cdots + m_{k} = 
s\bigl( w(\mathbfsc{a}) \bigr) - s (\mathbfsc{a})
$$
where $\beta (\mathbf{m}) = \sharp \{ \ i \ ; \ m_{i} > 1\}$.
\end{itemize}

\begin{proposition}\label{identical}
Let $\mathbfsc{a}, \mathbfsc{b} \in \tildeC$ and $w\in \fcal{M}$
be such that 
\begin{itemize}
\item[i)] $p(\mathbf{a}^{\pm}) \geqslant \ell (w)+1$ and $p(\mathbf{b}^{\pm}) \geqslant \ell (w)+1$, and
\item[ii)] we have 
$a_{i}^{\varepsilon} = b_{i}^{\varepsilon}$ for all $1\leqslant i \leqslant \ell (w)+1$ and $\varepsilon \in \{ +, - \}$.
\end{itemize}
Then $w\in \fcal{M}_{\mathbfsc{a}}\cap \fcal{M}_{\mathbfsc{b}}$, and 
the $w$-sequence of $\mathbfsc{a}$ and the $w$-sequence of $\mathbfsc{b}$ are identical. It is completely
determined by $a_{1}^{\pm}, \dots ,a_{\ell (w) + 1}^{\pm}$. In particular, we have
$$
s\bigl( w(\mathbfsc{a}) \bigr) - s(\mathbfsc{a})=s\bigl( w(\mathbfsc{b}) \bigr) - s(\mathbfsc{b})
\hbox{ and } 
c_{1}^{\varepsilon} = d_{1}^{\varepsilon}
$$
where $\mathbfsc{c} = w(\mathbfsc{a})$, $\mathbfsc{d} = w(\mathbfsc{b})$ and $\varepsilon \in \{ + , - \}$.
\end{proposition}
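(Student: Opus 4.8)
The plan is to argue by induction on $k=\ell(w)$, stripping off the innermost (first-applied) letter of $w=\eta_{1}\cdots\eta_{k}$, where $\eta_{1},\dots,\eta_{k}\in\fcal{A}$. The base case $k=0$ is immediate: then $w=\iota$, the $w$-sequence is empty, $w(\mathbfsc{a})=\mathbfsc{a}$, $w(\mathbfsc{b})=\mathbfsc{b}$, and hypothesis (ii) with $i=1$ gives $c_{1}^{\varepsilon}=a_{1}^{\varepsilon}=b_{1}^{\varepsilon}=d_{1}^{\varepsilon}$, so every assertion holds.

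For the inductive step I set $w''=\eta_{1}\cdots\eta_{k-1}$, $\mathbfsc{a}'=\eta_{k}(\mathbfsc{a})$ and $\mathbfsc{b}'=\eta_{k}(\mathbfsc{b})$, so that $w(\mathbfsc{a})=w''(\mathbfsc{a}')$ and $w(\mathbfsc{b})=w''(\mathbfsc{b}')$. The crux is the observation that each letter of $\fcal{A}$ reads only the first two parts of one of the two compositions and rewrites only near the front: $\sigma_{m}^{+}$ and $\tau_{m}^{+}$ depend on $\mathbfsc{a}$ only through $a_{1}^{+},a_{2}^{+}$ together with the unchanged tails, and symmetrically for $\sigma_{m}^{-},\tau_{m}^{-}$ via $\rho$. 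Since $a_{i}^{\varepsilon}=b_{i}^{\varepsilon}$ for $1\leqslant i\leqslant 2\leqslant k+1$, the letter $\eta_{k}$ acts in the same manner on $\mathbfsc{a}$ and on $\mathbfsc{b}$. I would first check $\mathbfsc{a}',\mathbfsc{b}'\neq\mathbfsc{o}$: this can only fail for a $\tau$-letter, but hypothesis (i) gives $p(\mathbf{a}^{+}),p(\mathbf{a}^{-})\geqslant k+1\geqslant 2$, so no annihilation occurs.

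The heart of the matter is to verify that $\mathbfsc{a}'$ and $\mathbfsc{b}'$ satisfy the hypotheses of the proposition for the shorter word $w''$, for which $\ell(w'')+1=k$. For (i) I use (Ob1): a $\sigma$-letter leaves $p(\mathbf{a}^{+})$ fixed and raises $p(\mathbf{a}^{-})$ by one, while $\tau^{+}$ lowers $p(\mathbf{a}^{+})$ by one and raises $p(\mathbf{a}^{-})$ by one; in every case $p((\mathbf{a}')^{\pm})\geqslant k$, the bound being \emph{tight} for $p((\mathbf{a}')^{+})$ in the $\tau^{+}$ case, which is exactly where $p(\mathbf{a}^{\pm})\geqslant\ell(w)+1$ is used at its sharpest. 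For (ii) I read off from the defining formulas that the first $k$ parts of $\mathbfsc{a}'$ are each a fixed expression in $a_{1}^{\pm},\dots,a_{k+1}^{\pm}$; for instance, when $\eta_{k}=\tau_{m}^{+}$ one has $(a')_{1}^{+}=(m+1)a_{1}^{+}+(m+2)a_{2}^{+}$, $(a')_{i}^{+}=a_{i+1}^{+}$ for $2\leqslant i\leqslant k$, $(a')_{1}^{-}=ma_{1}^{+}+(m+1)a_{2}^{+}$ and $(a')_{i}^{-}=a_{i-1}^{-}$ for $2\leqslant i\leqslant k$, so that the index $a_{k+1}^{+}$ is genuinely consumed. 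Evaluating the same expressions on $b_{1}^{\pm},\dots,b_{k+1}^{\pm}$ yields the first $k$ parts of $\mathbfsc{b}'$, and these agree by (ii), giving $(a')_{i}^{\varepsilon}=(b')_{i}^{\varepsilon}$ for $1\leqslant i\leqslant k$; the $\sigma^{\pm}$ and $\tau^{-}$ cases are identical after applying $\rho$.

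Finally I would reassemble. Applying the induction hypothesis to $w'',\mathbfsc{a}',\mathbfsc{b}'$ gives $w''\in\fcal{M}_{\mathbfsc{a}'}\cap\fcal{M}_{\mathbfsc{b}'}$, hence $w\in\fcal{M}_{\mathbfsc{a}}\cap\fcal{M}_{\mathbfsc{b}}$, identical $w''$-sequences determined by $(a')_{1}^{\pm},\dots,(a')_{k}^{\pm}$, and $c_{1}^{\varepsilon}=d_{1}^{\varepsilon}$ directly, since $w(\mathbfsc{a})=w''(\mathbfsc{a}')$ and $w(\mathbfsc{b})=w''(\mathbfsc{b}')$. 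Using $(\eta_{i}\cdots\eta_{k-1})(\mathbfsc{a}')=(\eta_{i}\cdots\eta_{k})(\mathbfsc{a})$, one checks that the $w''$-sequence of $\mathbfsc{a}'$ is precisely $(m_{1},\dots,m_{k-1})$, so the $w$-sequence of $\mathbfsc{a}$ is this segment with $m_{k}=s(\eta_{k}(\mathbfsc{a}))-s(\mathbfsc{a})$ appended; by (Ob2) this last entry equals $(m+1)a_{1}^{\pm}$ or $ma_{1}^{\pm}+(m+1)a_{2}^{\pm}$, a function of $a_{1}^{\pm},a_{2}^{\pm}$ alone, hence identical for $\mathbfsc{b}$. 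Thus the full $w$-sequence coincides for $\mathbfsc{a}$ and $\mathbfsc{b}$ and is determined by $a_{1}^{\pm},\dots,a_{k+1}^{\pm}$, and $s(w(\mathbfsc{a}))-s(\mathbfsc{a})=s(w(\mathbfsc{b}))-s(\mathbfsc{b})$ follows by telescoping this common sequence. I expect the only delicate point to be the part-count bookkeeping in the $\tau^{+}$ case, where the hypothesis on the number of parts must hold with no slack.
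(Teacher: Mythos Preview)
Your proof is correct and is essentially a careful unwinding of what the paper records in a single sentence: the authors simply state that the result is a direct consequence of the definitions of $\sigma_{m}^{\pm}$ and $\tau_{m}^{\pm}$, and your induction on $\ell(w)$ is exactly the natural way to make that precise. The bookkeeping you flag in the $\tau^{+}$ case (where the bound $p((\mathbf{a}')^{+})\geqslant k$ is tight) is indeed the only place the hypothesis $p(\mathbf{a}^{\pm})\geqslant\ell(w)+1$ is needed at full strength, and you handle it correctly.
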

\begin{proof}
These are direct consequences from the definitions of the operators $\sigma_{m}^{\pm}$ and 
$\tau_{m}^{\pm}$.
\end{proof}

\begin{lemma}\label{(1)-step}
Let $w\in \fcal{M}$, $\eta \in \fcal{A}$, $\mathbfsc{a}\in \tildeC$ and $\mathbfsc{b}=\eta (\mathbfsc{a})$.
\begin{itemize}
\item[a)] If $s\bigl( w(\mathbfsc{a}) \bigr) - s(\mathbfsc{a})=1$, then $w\in \fcal{A}$, and we are in one 
of the following four cases:
\begin{itemize}
\item[i)] $w = \sigma_{0}^{+}$, $a_{1}^{+}=1$ and $b_{1}^{-}=1$.
\item[ii)] $w = \sigma_{0}^{-}$, $a_{1}^{-}=1$ and $b_{1}^{+}=1$.
\item[iii)] $p(\mathbf{a}^{+} )\geqslant 2$, $w = \tau_{0}^{+}$, $a_{2}^{+}=1$ and $b_{1}^{-}=1$.
\item[iv)] $p(\mathbf{a}^{-} )\geqslant 2$, $w = \tau_{0}^{-}$, $a_{2}^{-}=1$ and $b_{1}^{+}=1$.
\end{itemize}
\item[b)] If $\min (b_{1}^{+}, b_{1}^{-})=1$, then $\eta \in\{ \sigma_{0}^{\pm}, \tau_{0}^{\pm}\}$.
Moreover, if $\eta \in \{ \sigma_{0}^{\pm} \}$, then $s\bigl( \eta(\mathbfsc{a}) \bigr) - s(\mathbfsc{a})=1$.
\end{itemize}
\end{lemma}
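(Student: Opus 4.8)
The plan is to handle the two parts separately: part a) reduces, via the length estimate (Ob4), to the observation that an increment of $1$ is too small to accommodate more than one letter, while part b) is a direct inspection of the four types of single operator.

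For part a), I would first note that the hypothesis $s\bigl(w(\mathbfsc{a})\bigr) = s(\mathbfsc{a}) + 1 \geqslant 2$ forces $w(\mathbfsc{a}) \neq \mathbfsc{o}$ and $w \neq \iota$, so that $w \in \fcal{M}_{\mathbfsc{a}} \setminus \{\iota\}$ and its $w$-sequence $\mathbf{m} = (m_1, \dots, m_k)$ is defined. By (Ob4) each $m_i \geqslant 1$ and $\ell(w) + \beta(\mathbf{m}) \leqslant m_1 + \cdots + m_k = s\bigl(w(\mathbfsc{a})\bigr) - s(\mathbfsc{a}) = 1$; since $\ell(w) = k \geqslant 1$ this is only possible if $k = 1$. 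Hence $w \in \fcal{A}$ is a single operator; writing $\mathbfsc{b} = w(\mathbfsc{a})$ for its image (as in the statement), I would run through the four types using (Ob2). If $w = \sigma_m^+$ the increment is $(m+1)a_1^+ = 1$, forcing $m = 0$ and $a_1^+ = 1$, and reading off $\sigma_0^+$ gives $b_1^- = (m+1)a_1^+ = 1$ (case i)). If $w = \tau_m^+$ then necessarily $p(\mathbf{a}^+) \geqslant 2$ (otherwise $w(\mathbfsc{a}) = \mathbfsc{o}$), and the increment $m a_1^+ + (m+1)a_2^+ = 1$ with $a_1^+, a_2^+ \geqslant 1$ forces $m = 0$ and $a_2^+ = 1$, whence $b_1^- = a_2^+ = 1$ (case iii)). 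Cases ii) and iv) follow from these by the conjugation identities $\sigma_m^- = \rho\,\sigma_m^+\,\rho$ and $\tau_m^- = \rho\,\tau_m^+\,\rho$.

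For part b), the hypothesis $\min(b_1^+, b_1^-) = 1$ presupposes $\mathbfsc{b} = \eta(\mathbfsc{a}) \in \tildeC$, so if $\eta$ is a $\tau$-operator then $p(\mathbf{a}^{\pm}) \geqslant 2$ automatically. I would examine the four types of $\eta$ and read off the two leading parts from the definitions. For $\eta = \sigma_m^+$ one has $(b_1^+, b_1^-) = \bigl((m+2)a_1^+, (m+1)a_1^+\bigr)$, so $\min = (m+1)a_1^+ = 1$ forces $m = 0$ and $a_1^+ = 1$, i.e. $\eta = \sigma_0^+$; moreover (Ob2) then gives the increment $(m+1)a_1^+ = 1$, as required by the last clause. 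For $\eta = \tau_m^+$ one has $b_1^+ - b_1^- = a_1^+ + a_2^+ > 0$, so $\min = b_1^- = m a_1^+ + (m+1)a_2^+ = 1$ forces $m = 0$, i.e. $\eta = \tau_0^+$. Again the minus cases follow by conjugating with $\rho$, so $\eta \in \{\sigma_0^{\pm}, \tau_0^{\pm}\}$.

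The only genuine step is the reduction in part a) to a single letter, and this is exactly where (Ob4) does the work, combining $\ell(w) \leqslant \sum_i m_i$ with $m_i \geqslant 1$. After that, both parts amount to the same finite bookkeeping: solving $(m+1)a = 1$ or $m a_1 + (m+1)a_2 = 1$ in nonnegative integers, whose unique solutions are the claimed $m = 0$ ones. I therefore expect no real obstacle beyond correctly invoking (Ob4) and keeping track of which leading part realizes the minimum in the $\tau$ cases.
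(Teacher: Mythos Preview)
Your argument is correct and is exactly the ``straightforward check from the definitions'' the paper alludes to in its one-line proof: you invoke (Ob4) to force $\ell(w)=1$ in part a), and then in both parts you read off the leading parts and increments from (Ob2) and the explicit formulas for $\sigma_m^\pm$, $\tau_m^\pm$. There is nothing to add.
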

\begin{proof}
These are straightforward checks from the definitions.
\end{proof}

\begin{lemma}\label{(1,1,1)-step}
Let $\varepsilon_{1},\varepsilon_{2}\in \{+,-\}$ and $\mathbfsc{a}\in \tildeC$ be such that 
the $\sigma_{0}^{\varepsilon_{1}}\sigma_{0}^{\varepsilon_{2}}$-sequence of $\mathbfsc{a}$
is $(1,1)$. Then $\varepsilon_{1}\neq \varepsilon_{2}$ and 
$a_{1}^{\varepsilon_{2}} = 1$.

Furthermore, if $\mathbfsc{a}\in \tildeF\setminus \{\mathbfsc{i}\}$ and $\eta \in \fcal{A}$ is such that the 
$\eta\sigma_{0}^{\varepsilon_{1}}\sigma_{0}^{\varepsilon_{2}}$-sequence of $\mathbfsc{a}$
is $(1,1,1)$, then $\eta = \sigma_{0}^{\varepsilon_{2}}$.
\end{lemma}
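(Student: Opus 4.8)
The plan is to read off the two sum-increments directly from (Ob2), make the composition $\sigma_0^{\varepsilon_1}\sigma_0^{\varepsilon_2}(\mathbfsc{a})$ fully explicit, and then feed the result into Lemma \ref{(1)-step}. The Frobenius hypothesis will enter only at the very last step, through Remark \ref{meander}.

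For the first assertion, write the $\sigma_0^{\varepsilon_1}\sigma_0^{\varepsilon_2}$-sequence as $(m_1,m_2)$. By (Ob2) with $m=0$ we have $m_2 = s(\sigma_0^{\varepsilon_2}(\mathbfsc{a})) - s(\mathbfsc{a}) = a_1^{\varepsilon_2}$, so the hypothesis forces $a_1^{\varepsilon_2}=1$. Setting $\mathbfsc{b}=\sigma_0^{\varepsilon_2}(\mathbfsc{a})$, again by (Ob2) we get $m_1 = s(\sigma_0^{\varepsilon_1}(\mathbfsc{b})) - s(\mathbfsc{b}) = b_1^{\varepsilon_1}$, so I need $b_1^{\varepsilon_1}=1$. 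Since $\sigma_m^- = \rho\,\sigma_m^+\,\rho$, I may assume $\varepsilon_2=+$ by $\rho$-symmetry; then $a_1^+=1$ gives $\mathbf{b}^+=(2,a_2^+,\dots)$ and $\mathbf{b}^-=(1,a_1^-,\dots)$, so $b_1^+=2$ and $b_1^-=1$. Hence $b_1^{\varepsilon_1}=1$ is possible only for $\varepsilon_1=-$, which proves $\varepsilon_1\neq\varepsilon_2$.

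For the second assertion, observe that the $(1,1,1)$-hypothesis restricts on the inner two letters to the $(1,1)$-hypothesis just treated, so I may keep $\varepsilon_2=+$, $a_1^+=1$, $\varepsilon_1=-$. Using $\sigma_0^-=\rho\,\sigma_0^+\,\rho$ I would compute $\mathbfsc{c}=\sigma_0^{\varepsilon_1}\sigma_0^{\varepsilon_2}(\mathbfsc{a})=\sigma_0^-(\mathbfsc{b})$, obtaining $\mathbf{c}^+=(1,2,a_2^+,\dots)$ and $\mathbf{c}^-=(2,a_1^-,a_2^-,\dots)$; in particular $c_1^+=1$, $c_1^-=2$, $c_2^+=2$ and $c_2^-=a_1^-$. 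Now the $\eta\sigma_0^{\varepsilon_1}\sigma_0^{\varepsilon_2}$-sequence of $\mathbfsc{a}$ being $(1,1,1)$ means exactly that $s(\eta(\mathbfsc{c})) - s(\mathbfsc{c})=1$, so Lemma \ref{(1)-step}a), applied with $\mathbfsc{c}$ in the role of $\mathbfsc{a}$, places us in one of its four cases. Cases ii) and iii) are excluded outright by $c_1^-=2\neq 1$ and $c_2^+=2\neq 1$. Case iv) would require $c_2^-=a_1^-=1$; combined with $a_1^+=1$ this gives $a_1^+=a_1^-$, so Remark \ref{meander} would force $\mathbfsc{a}=\mathbfsc{i}$, contradicting $\mathbfsc{a}\in\tildeF\setminus\{\mathbfsc{i}\}$. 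Only case i) remains, yielding $\eta=\sigma_0^+=\sigma_0^{\varepsilon_2}$; the case $\varepsilon_2=-$ follows by conjugating the whole argument with $\rho$.

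The only routine parts are the two $\rho$-conjugations needed to write $\mathbfsc{b}$ and $\mathbfsc{c}$ explicitly. The one genuinely substantive step is the elimination of case iv): this is precisely where the Frobenius hypothesis is used, via Remark \ref{meander}, to rule out $\eta=\tau_0^{-}$. Everything else is a direct application of (Ob2) and the definition of $\sigma_0^{\pm}$.
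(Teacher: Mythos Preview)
Your proof is correct and follows essentially the same route as the paper's proof, which simply states that the lemma is a direct consequence of Lemma~\ref{(1)-step} together with the fact (Remark~\ref{meander}) that $a_1^+\neq a_1^-$ when $\mathbfsc{a}\in\tildeF\setminus\{\mathbfsc{i}\}$. You have spelled out explicitly the computations of $\mathbfsc{b}$ and $\mathbfsc{c}$ and the case-by-case elimination that the paper leaves implicit, but the logical content is the same.
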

\begin{proof}
These are direct consequences of Lemma \ref{(1)-step} because when 
$\mathbfsc{a}\in \tildeF\setminus \{\mathbfsc{i}\}$, we have $a_{1}^{+}\neq a_{1}^{-}$ (Remark \ref{meander}).
\end{proof}

Let us denote 
$$
\fcal{Z} = \{ \sigma_{0}^{\varepsilon_{1}} \cdots \sigma_{0}^{\varepsilon_{r}} \, ; \,
r\in\mathbb{N}^{*}, \ \varepsilon_{1},\dots ,\varepsilon_{r}\in \{ + , - \} \hbox{ and } \varepsilon_{i}\neq \varepsilon_{i+1} 
\hbox{ for all } 1\leqslant i \leqslant r-1 \} \subset \fcal{M}.
$$
If $\sigma_{0}^{\varepsilon_{1}} \cdots \sigma_{0}^{\varepsilon_{r}} \in \fcal{Z}$, then the $\varepsilon_{i}$'s
are completely determined by  $\varepsilon_{r}$. We shall write $\zeta_{r}^{\varepsilon_{r}}$ instead of 
$\sigma_{0}^{\varepsilon_{1}} \cdots \sigma_{0}^{\varepsilon_{r}}$.

\begin{lemma}\label{zeta}
Let $\mathbfsc{a}\in\tildeC$ and $w=\eta_{1}\cdots \eta_{k} \in \fcal{M}_{\mathbfsc{a}}$ be such that
$\eta_{1},\dots ,\eta_{k}\in\fcal{S}$. If the $w$-sequence of $\mathbfsc{a}$ is $(1,\dots ,1)$, then 
$w\in \fcal{Z}$ and we have 
\begin{itemize}
\item[i)] either $a_{1}^{+} = 1$ and $w = \zeta_{k}^{+}$,
\item[ii)] or $a_{1}^{-}=1$ and $w=\zeta_{k}^{-}$.
\end{itemize}
\end{lemma}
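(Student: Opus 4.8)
The plan is to read off the structure of $w$ one letter at a time, working from the innermost application outward, and to reduce the two substantive points—that consecutive letters have opposite signs, and that the forced first part sits in the correct composition—to Lemmas \ref{(1)-step} and \ref{(1,1,1)-step}. Write $\mathbfsc{a}^{(i)} = (\eta_i\cdots\eta_k)(\mathbfsc{a})$ for $1\leqslant i\leqslant k$ and $\mathbfsc{a}^{(k+1)}=\mathbfsc{a}$. Since $w\in\fcal{M}_{\mathbfsc{a}}$, the observation following the definition of $\mathbfsc{a}$-null elements (suffixes of non-null words are non-null) shows that every $\mathbfsc{a}^{(i)}$ lies in $\tildeC$; moreover $\mathbfsc{a}^{(i)}=\eta_i(\mathbfsc{a}^{(i+1)})$ and, by definition of the $w$-sequence, $m_i=s(\mathbfsc{a}^{(i)})-s(\mathbfsc{a}^{(i+1)})=1$ for every $i$.

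First I would identify each letter. Fix $i$ and apply Lemma \ref{(1)-step}~a) to the one-letter word $\eta_i$ and the pair $\mathbfsc{a}^{(i+1)}$: since the increment $s(\eta_i(\mathbfsc{a}^{(i+1)}))-s(\mathbfsc{a}^{(i+1)})=m_i=1$, the operator $\eta_i$ must be one of $\sigma_0^{\pm}$, $\tau_0^{\pm}$. As $\eta_i\in\fcal{S}$ by hypothesis, this forces $\eta_i=\sigma_0^{\varepsilon_i}$ for some $\varepsilon_i\in\{+,-\}$. (Equivalently, writing $\eta_i=\sigma_m^{\varepsilon_i}$, the formula for $s(\sigma_m^{\pm}(\cdot))$ in (Ob2) exhibits $m_i$ as a positive multiple of $m+1$, so $m_i=1$ forces $m=0$.) Hence $w=\sigma_0^{\varepsilon_1}\cdots\sigma_0^{\varepsilon_k}$.

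Next I would prove the signs alternate. For each $1\leqslant i\leqslant k-1$ I would isolate the length-two subword $\sigma_0^{\varepsilon_i}\sigma_0^{\varepsilon_{i+1}}$ and compute its sequence relative to the base pair $\mathbfsc{a}^{(i+2)}$: since $\sigma_0^{\varepsilon_{i+1}}(\mathbfsc{a}^{(i+2)})=\mathbfsc{a}^{(i+1)}$ and $\sigma_0^{\varepsilon_i}(\mathbfsc{a}^{(i+1)})=\mathbfsc{a}^{(i)}$, this sequence is exactly $(m_i,m_{i+1})=(1,1)$. The first assertion of Lemma \ref{(1,1,1)-step} then gives $\varepsilon_i\neq\varepsilon_{i+1}$. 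As $i$ is arbitrary the signs strictly alternate, so $w\in\fcal{Z}$ and, by the convention fixing the notation $\zeta$, $w=\zeta_k^{\varepsilon_k}$.

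Finally I would read off the forced first part from the last letter $\eta_k=\sigma_0^{\varepsilon_k}$. Applying cases i)--ii) of Lemma \ref{(1)-step}~a) to the single step $\eta_k$ on $\mathbfsc{a}^{(k+1)}=\mathbfsc{a}$ (or simply (Ob2), which gives $1=m_k=a_1^{\varepsilon_k}$) yields $a_1^{\varepsilon_k}=1$. Thus $\varepsilon_k=+$ gives $a_1^+=1$ with $w=\zeta_k^+$, which is case i), and $\varepsilon_k=-$ gives $a_1^-=1$ with $w=\zeta_k^-$, which is case ii). The only point needing care is the bookkeeping in the alternation step—verifying that the extracted subword $\sigma_0^{\varepsilon_i}\sigma_0^{\varepsilon_{i+1}}$ genuinely has sequence $(1,1)$ with respect to the correct base pair $\mathbfsc{a}^{(i+2)}$—after which the quoted lemmas apply verbatim; the degenerate case $k=1$ is immediate, since the alternation condition is then vacuous and $w=\sigma_0^{\varepsilon_1}=\zeta_1^{\varepsilon_1}$.
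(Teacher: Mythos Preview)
Your proof is correct and follows exactly the approach indicated by the paper, which simply notes that the lemma is a direct consequence of Lemmas \ref{(1)-step} and \ref{(1,1,1)-step}. You have carefully unpacked that one-line justification: Lemma \ref{(1)-step}~a) together with $\eta_i\in\fcal{S}$ forces each letter to be $\sigma_0^{\pm}$, the first assertion of Lemma \ref{(1,1,1)-step} applied to consecutive pairs gives the alternation, and a final application of Lemma \ref{(1)-step}~a) (or (Ob2)) to the last letter yields $a_1^{\varepsilon_k}=1$.
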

\begin{proof}
This is a direct consequence of Lemmas \ref{(1)-step} and \ref{(1,1,1)-step}.
\end{proof}

\begin{remark}\label{zetadescription}
Let $\mathbfsc{a}\in\tildeC$ be such that $a_{1}^{+}=1$. Then for $m\in\mathbb{N}$, we have
$$
\zeta_{2m}^{+} (\mathbfsc{a}) = 
\bigl(
( 1 , \underbrace{2 , \dots ,2}_{m \text{ times}} , a_{2}^{+} , \dots ,a_{p(\mathbf{a}^{+})}^{+} ) ,
( \underbrace{2 , \dots ,2}_{m \text{ times}} , a_{1}^{-} , \dots ,a_{p(\mathbf{a}^{-})}^{-} )
\bigr)
$$
and 
$$
\zeta_{2m+1}^{+} (\mathbfsc{a}) = 
\bigl(
( \underbrace{2 , \dots ,2}_{m+1 \text{ times}} , a_{2}^{+} , \dots ,a_{p(\mathbf{a}^{+})}^{+} ) ,
( 1,\underbrace{2 , \dots ,2}_{m \text{ times}} , a_{1}^{-} , \dots ,a_{p(\mathbf{a}^{-})}^{-} )
\bigr).
$$
We have similar descriptions for $\zeta_{2m}^{-}$ and $\zeta_{2m+1}^{-}$ when $a_{1}^{-}=1$.
\end{remark}

\section{On the growth of $\sharp \tildeF_{n,n+1-t}$ with respect to $n$}

For $p\in\mathbb{N}^{*}$, we set 
$$
\tildeF_{n,p}= \{ \mathbfsc{a} \in \tildeF \ ; \ s(\mathbfsc{a}) = n \hbox{ and } 
p(\mathbfsc{a}) = p \}.
$$

\begin{proposition}\label{empty}
Let $n,p\in\mathbb{N}^{*}$, then 
$\tildeF_{n,p}$ is empty if $p > n+1$.
\end{proposition}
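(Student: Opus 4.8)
The plan is to transport the statement to the free monoid $\fcal{M}$ via Theorem \ref{frobeniusbijection} and then read off a bound from the counting observations (Ob3) and (Ob4). Suppose $\tildeF_{n,p}$ is nonempty and choose $\mathbfsc{a}\in\tildeF_{n,p}$. By Theorem \ref{frobeniusbijection} there is a unique $w\in\fcal{M}_{\mathbfsc{i}}$ with $\mathbfsc{a}=w(\mathbfsc{i})$. Since $s(\mathbfsc{i})=1$ and $p(\mathbfsc{i})=p\bigl((1)\bigr)+p\bigl((1)\bigr)=2$, the invariants $s(\mathbfsc{a})=n$ and $p(\mathbfsc{a})=p$ become assertions about $w$, and it suffices to prove the contrapositive, namely $p\leqslant n+1$.

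First I would apply (Ob3) with $\mathbfsc{a}=\mathbfsc{i}$ to obtain $p=p\bigl(w(\mathbfsc{i})\bigr)=p(\mathbfsc{i})+\sigma(w)=2+\sigma(w)$, so that the goal reduces to $\sigma(w)\leqslant n-1$. Next I would bound $\sigma(w)$ by the length: again by (Ob3), $\ell(w)=\sigma(w)+\tau(w)\geqslant\sigma(w)$ because $\tau(w)\geqslant0$. Finally, applying (Ob4) to the $w$-sequence $\mathbf{m}$ of $\mathbfsc{i}$ gives
$$
\ell(w)\leqslant\ell(w)+\beta(\mathbf{m})\leqslant s\bigl(w(\mathbfsc{i})\bigr)-s(\mathbfsc{i})=n-1.
$$
Chaining these inequalities yields $\sigma(w)\leqslant\ell(w)\leqslant n-1$, whence $p=2+\sigma(w)\leqslant n+1$, as required.

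There is essentially no hard step once the problem is moved into the free monoid: the result is a direct combination of (Ob3) and (Ob4), the key inputs being that every letter of $\fcal{S}$ increases the number of parts by one (while $\fcal{T}$ preserves it) and that every letter increases the sum by at least one, so each term of the $w$-sequence satisfies $m_i\geqslant1$. The only point to verify is the degenerate case $w=\iota$, i.e.\ $\mathbfsc{a}=\mathbfsc{i}$, where $\sigma(w)=0$, $n=1$ and $p=2=n+1$; the conventions setting all these quantities to zero for $w=\iota$ make the bound hold automatically there as well.
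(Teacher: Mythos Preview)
Your proof is correct and follows essentially the same approach as the paper's own proof: both transport the question to $\fcal{M}_{\mathbfsc{i}}$ via Theorem~\ref{frobeniusbijection} and then combine (Ob3) and (Ob4) to obtain $\sigma(w)\leqslant\ell(w)\leqslant n-1$. The only difference is cosmetic---you argue the contrapositive directly while the paper argues by contradiction---and your explicit treatment of the degenerate case $w=\iota$ is a welcome clarification.
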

\begin{proof}
Let $p>n+1$ and suppose that $\tildeF_{n,p}$ is non empty. By Theorem \ref{frobeniusbijection},
there exists $w\in \fcal{M}_{\mathbfsc{i}}$ such that $w(\mathbfsc{i}) \in \tildeF_{n,p}$. Since $p>n+1$,
by (Ob3), we have 
$$
\ell (w) \geqslant \sigma (w) = p\bigl( w(\mathbfsc{i}) \bigr) - p(\mathbfsc{i}) >  n. 
$$
But this is impossible because by (Ob4), we have  
$$
n - 1 = s\bigl( w( \mathbfsc{i}) \bigr) - s(\mathbfsc{i}) \geqslant \ell (w)  + \beta (\mathbf{m}) \geqslant \ell (w).
$$
We conclude that $\tildeF_{n,p}$ is empty if $p > n+1$.
\end{proof}

\begin{theorem}\label{growth}
Let $t\in \mathbb{N}$. There exists a polynomial $P_{t} \in\mathbb{Q}[T]$ of degree $\left[ \frac{t}{2}\right]$
and with positive dominant coefficient such that 
$$
\sharp \tildeF_{n,n+1-t} = P_{t}(n)
$$ 
for large $n$.
\end{theorem}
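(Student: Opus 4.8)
The plan is to use Theorem \ref{frobeniusbijection} to convert the count into a problem about words in the free monoid $\fcal{M}$, and then to read off the polynomial behaviour from the combinatorics of $w$-sequences.

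First I would reduce to counting words. By Theorem \ref{frobeniusbijection}, $\sharp\tildeF_{n,n+1-t}$ equals the number of $w\in\fcal{M}_{\mathbfsc{i}}$ with $s\bigl(w(\mathbfsc{i})\bigr)=n$ and $p\bigl(w(\mathbfsc{i})\bigr)=n+1-t$. Writing $\mathbf{m}=(m_{1},\dots,m_{k})$ for the $w$-sequence of $\mathbfsc{i}$, the condition on $p$ becomes $\sigma(w)=n-1-t$ by (Ob3) (since $p(\mathbfsc{i})=2$), while the condition on $s$ reads $m_{1}+\cdots+m_{k}=n-1$. Combining these with $\ell(w)=\sigma(w)+\tau(w)$ and (Ob4) gives
$$
\tau(w)+\beta(\mathbf{m})\ \leqslant\ \sum_{i}(m_{i}-1)+\tau(w)\ =\ (n-1)-\sigma(w)\ =\ t .
$$
Thus every admissible $w$ has at most $t$ letters that are either in $\fcal{T}$ or carry an increment $m_{i}>1$; I shall call these the \emph{defect letters}, and there are at most $t$ of them, uniformly in $n$.

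Next I would set up the canonical decomposition of an admissible $w$. By Lemma \ref{zeta} every maximal factor of $w$ consisting of consecutive $\fcal{S}$-letters all of increment $1$ lies in $\fcal{Z}$, hence is some $\zeta_{r}^{\pm}$; I call these the \emph{blocks}, so that $w$ is an alternating concatenation of at most $t$ defect letters and blocks $\zeta_{r_{1}}^{\varepsilon_{1}},\dots,\zeta_{r_{K}}^{\varepsilon_{K}}$. By Proposition \ref{identical} the whole $w$-sequence, and in particular the applicability of each letter and the value of each increment, is governed solely by the leading parts of the current pair, while Remark \ref{zetadescription} describes explicitly how these leading parts evolve through a block (they stay in the staircase pattern with leading entries in $\{1,2\}$). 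Call a block \emph{free} if its length $r_{j}$ may grow with $n$; by (Ob2) a free block contributes $r_{j}$ to both $\sigma(w)$ and to $s\bigl(w(\mathbfsc{i})\bigr)$. After separating off the finitely many bounded blocks and absorbing them into the finitely many possible defect patterns, the free block lengths of a fixed pattern satisfy $r_{1}+\cdots+r_{K}=n-c$ with $c=c(t)$, and are subject to fixed parity constraints coming from the adjacent defects; the number of such length vectors is a quasi-polynomial in $n$ of degree $K-1$.

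The degree of the total count is therefore $K_{\max}-1$, where $K_{\max}$ is the largest number of free blocks in an admissible word, and the step I expect to be the main obstacle is to prove $K_{\max}=\left[\frac{t}{2}\right]+1$. The upper bound rests on a cost analysis via Lemma \ref{(1)-step} and Remark \ref{zetadescription}: a defect letter following a free block can be applied only because the staircase supplies a leading entry $2$ (so it does not bound the block), whereas recreating a leading entry $1$ — which is exactly what a new free block needs to start — forces, by the operator formulas, either an increment-$1$ operator $\sigma_{0}^{\pm}$ (already inside a block) or an increment-$1$ operator $\tau_{0}^{\pm}$, which demands $a_{2}=1$ and hence a \emph{bounded} preceding block. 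Consequently no single defect letter can both accept an arbitrarily long block on its left and reset the leading entry to $1$ on its right, so each gap between two consecutive free blocks contains at least two defect letters; with at most $t$ defect letters this yields $2(K-1)\leqslant t$, i.e. $K\leqslant\left[\frac{t}{2}\right]+1$. For the matching lower bound one exhibits, for all large $n$, words realizing $\left[\frac{t}{2}\right]+1$ free blocks by inserting $\left[\frac{t}{2}\right]$ resetting pairs of defects. It then remains to sum the quasi-polynomial contributions over all patterns: one checks, using the $\rho$-symmetry exchanging the two compositions, that the opposite-parity contributions combine into a genuine polynomial $P_{t}\in\mathbb{Q}[T]$ (this symmetry is precisely what is unavailable in the parabolic case, explaining the parity restriction in part (a)), of degree $\left[\frac{t}{2}\right]$, and with positive dominant coefficient since the top-degree patterns contribute nonnegative binomial leading terms that cannot cancel.
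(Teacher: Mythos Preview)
Your approach is essentially the paper's: reduce to words via Theorem \ref{frobeniusbijection}, extract from (Ob3)--(Ob4) the bound $\tau(w)+\beta(\mathbf{m})\leqslant t$, decompose $w$ into $\fcal{Z}$-blocks separated by ``defect'' pieces, show there are at most $\left[\frac{t}{2}\right]+1$ unbounded blocks, and count. The paper formalises your ``free vs.\ bounded'' distinction through two nested decompositions (a first one isolating all $\fcal{Z}$-blocks, then a second one promoting those satisfying a length condition $(*)$ to the role of your free blocks); your cost argument ``two defects per gap'' corresponds to the paper's observation that each intermediate piece $v_{i}$ contributes at least one to $\tau(w)$ (via Lemma \ref{(1)-step}(b)) and at least one to $\beta(\mathbf{m})$ (via Lemma \ref{(1,1,1)-step}), giving $2(s-1)\leqslant\tau(w)+\beta(\mathbf{m})\leqslant t$.

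The one place where your sketch has a genuine gap is the passage from quasi-polynomial to polynomial. You invoke ``$\rho$-symmetry'', but global conjugation $w\mapsto\rho w\rho=\overline{w}$ preserves every block length $k_{i}$, so it cannot pair up patterns with different parities of the $k_{i}$. What is actually needed --- and what the paper supplies --- is a \emph{local} action: for each position $i$ one may replace $\zeta_{k_{i}}^{\varepsilon_{i}}$ by $\zeta_{k_{i}+1}^{-\varepsilon_{i}}$ while simultaneously replacing the adjacent bounded piece $w_{i}$ by $\overline{w_{i}}$ and adjusting $\pi_{i},\pi_{i+1}$; this is legitimate because, by Remark \ref{zetadescription}, the two $\zeta$-words produce the same leading $\ell(w_{i-1})+1$ entries up to a swap of the two compositions, so Proposition \ref{identical} guarantees the surrounding computation is unchanged. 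These local moves $\gamma_{0},\dots,\gamma_{s}$ generate a free $(\mathbb{Z}/2\mathbb{Z})^{s+1}$-action on the parameter set $\Xi=[\Upsilon^{\circ}]\times\{+,-\}^{s}\times\{+,-\}^{s}$, and only after summing over an orbit $\Omega$ do the parity constraints (K2) disappear: the count becomes $2\sharp\mathcal{K}_{n}$ with $\mathcal{K}_{n}$ defined by the linear constraint (K1) alone, hence a genuine polynomial of degree $s-1$ in $n$. Your $\rho$-symmetry is only the diagonal element $\gamma_{0}\cdots\gamma_{s}$ of this group and does not by itself remove any parity condition; without the full local action the quasi-polynomial pieces need not add up to a polynomial, and your argument is incomplete at exactly this step.
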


\begin{proof}
Let $n\in\mathbb{N}^{*}$. By Theorem \ref{frobeniusbijection}, $\tildeF_{n,n+1-t}$ is in bijection with 
$$
\fcal{W}_{n} = \{ w\in \fcal{M}_{\mathbfsc{i}} \ ; \ w(\mathbfsc{i}) \in \tildeF_{n,n+1-t} \}.
$$ 

We shall first establish some properties of elements in $\fcal{W}_{n}$.
Let us fix $w=\eta_{1}\cdots \eta_{k}\in\fcal{W}_{n}$ with $\eta_{1},\dots ,\eta_{k}\in\fcal{A}$ and
denote by $\mathbf{m}=(m_{1},\dots ,m_{k})$ the $w$-sequence of $\mathbfsc{i}$.

\paragraph{Conditions on $\ell (w)$, $\sigma (w)$, $\tau (w)$ and $\beta (\mathbf{m})$.}

By (Ob3), we have
$$
n+1-t = p\bigl( w(\mathbfsc{i}) \bigr) = p(\mathbfsc{i}) + \sigma (w) = \sigma(w)+2.
$$
\begin{itemize}
\item[{\bf (Ob5)}] Hence by (Ob4), we obtain the inequalities
$$
n-1-t  = \sigma( w) \leqslant \ell (w) \leqslant \ell (w)  + \beta (\mathbf{m})  \leqslant 
s\bigl( w(\mathbfsc{i}) \bigr) - s(\mathbfsc{i}) = n - 1.
$$
In particular, we have
$$
0 \leqslant \tau (w) + \beta (\mathbf{m}) = \ell (w) + \beta (\mathbf{m}) - \sigma (w)  \leqslant t.
$$
\end{itemize}

\paragraph{Conditions on the $m_{i}$'s and $\eta_{i}$'s.} 
Again, by using (Ob4), we obtain 
$$
(\ell(w)-1) + m_{i} \leqslant m_{1}+\cdots + m_{k} = s\bigl( w(\mathbfsc{i}) \bigr) - s(\mathbfsc{i}) = n - 1.
$$
It follows from (Ob5) that 
$$
1\leqslant m_{i} \leqslant n-\ell( w) \leqslant t+1.
$$
\begin{itemize}
\item[{\bf (Ob6)}] Hence we deduce from (Ob2) that
$$
\eta_{i} \in \fcal{A}_{t} =\{ \sigma_{0}^{\pm} , \dots ,\sigma_{t}^{\pm}, \tau_{0}^{\pm}, \dots ,\tau_{t}^{\pm}\}
$$
for all $1\leqslant i \leqslant k$. 
\end{itemize}
Let us denote by $\fcal{M}_{t}$ the free submonoid of $\fcal{M}$ generated
by $\fcal{A}_{t}$.

\paragraph{A first decomposition of $w$.} Recall that we have
$$
\begin{array}{cccccccc}
w & = & \eta_{1} & \eta_{2} & \cdots & \eta_{k} & \\
\mathbf{m} & = & (m_{1})& (m_{2}) & \cdots & (m_{k}) & 
\end{array}
$$
where the right hand side of the equality in the second line is understood to be the concatenation
of the sequence. We rewrite the above decomposition by grouping pairs $(\eta_{i},m_{i})$ according 
to it being of the form $(\sigma^{\pm}_{0},1)$ or not. It follows that there exists $q\in\mathbb{N}$ such that
$$
\begin{array}{cccccccccc}
w & = & w_{0} & z_{1}  & w_{1} & \cdots & w_{q-1} & z_{q} & w_{q} \\
\mathbf{m} & = & \mathbf{m}_{0}& \mathbf{n}_{1} & \mathbf{m}_{1} & \cdots & \mathbf{m}_{q-1} & 
\mathbf{n}_{q} & \mathbf{m}_{q}
\end{array}
$$
where the $w_{i}$'s does not contain any pairs of the form $(\sigma^{\pm}_{0},1)$ 
and the $z_{i}$'s contains only pairs of the form $(\sigma^{\pm}_{0},1)$. By construction, 
$\mathbf{n}_{i}=(1,\dots ,1)$ is not empty for $1\leqslant i\leqslant q$, so $z_{1},\dots ,z_{q}\in  \fcal{Z}$.
Note also that $w_{1},\dots ,w_{q-1} \in \fcal{M}\setminus \{ \iota\}$.

This decomposition is therefore unique, and we denote $\Delta (w) = (w_{0},\dots ,w_{q})$, 
that we shall call the \highlight{$\Delta$-sequence} of $w$.

\paragraph{Conditions on $q$ and $w_{i}$.}
It follows from Lemma \ref{(1)-step} part b) that for $1\leqslant i\leqslant q-1$, we have 
$w_{i} =\tau_{0}^{\varepsilon} w_{i}'$ for some $w_{i}' \in \fcal{M}$ and $\varepsilon \in \{ + , -\}$. 
Hence 
$$
q-1 \leqslant \tau (w) \leqslant t
$$
by (Ob5).

For $0\leqslant i\leqslant q$, there exist $w',w''\in \fcal{M}$ such that $w=w'w_{i}w''$. We observe
readily that $\mathbf{m}_{i}$ is the $w_{i}$-sequence of $w''(\mathbfsc{a})$.  By construction, if
$(\eta_{j},m_{j})$ is a pair occuring in $w_{i}$, then either $m_{j}>1$ or $\eta_{j} \in \fcal{T}$.
Hence 
$$
\ell (w_{i}) \leqslant \beta (\mathbf{m}_{i}) + \tau (w_{i})  \leqslant t
$$ 
by (Ob5). Consequently, we have
$$
w_{i} \in \fcal{N}_{t} = \{ u \in \fcal{M}_{t} \ ; \ \ell (u) \leqslant t \}.
$$
The set $\fcal{N}_{t}$ is finite and depends only on $t$. We deduce that 
$$
\fcal{W}_{n} = \bigcup_{q=0}^{t+1} \bigcup_{\Delta \in \fcal{N}_{t}^{q+1}} \fcal{W}_{n,\Delta}
$$
where $\fcal{W}_{n,\Delta} = \{ u \in \fcal{W}_{n} \, ; \, \Delta (u) = \Delta \}$. Of course, it may happen that
$\fcal{W}_{n,\Delta}$ is empty.

\paragraph{A second decomposition of $w$.} 
Let  $r\in\mathbb{N}$ be such that $r\leqslant t+1$. Given $\mathbf{w}=(w_{0},\dots ,w_{r}) \in \fcal{N}_{t}^{r+1}$ and 
$\mathbf{u}=(u_{1}, \dots ,u_{r}) \in  \fcal{Z}^{r}$,
we set
$$
x_{\mathbf{w},\mathbf{u}} = w_{0} u_{1} w_{1} \cdots w_{r-1} u_{r} w_{r}.
$$
We say that $x_{\mathbf{w},\mathbf{u}}$ verifies $(*)$  if for $h=1,\dots ,r$, we have
$$
\ell (u_{h})
< 2\ell (w_{0}  u_{1} w_{1} \cdots w_{h -1}) + 1.
$$
In particular, if $x_{\mathbf{w},\mathbf{u}}$ verifies $(*)$, then
$$
\ell (u_{h}) < 2 \sum_{k=0}^{h-1} 3^{h-k-1} \ell (w_{k}) + 3^{h-1}.
$$
It follows from the definition of $\fcal{Z}$ that the set
$$
\fcal{R}_{t} = \{ x_{\mathbf{w},\mathbf{u}} \, ; \,  ( \mathbf{w} , \mathbf{u}) \in \fcal{N}_{t}^{r+1} \times 
\fcal{Z}^{r} \hbox{ for some $r\leqslant t+1$ and } x_{\mathbf{w},\mathbf{u}} \hbox{ verifies $(*)$} \}
$$
is finite and its cardinal depends only on $t$.

We can extract from the first decomposition a new decomposition 
$$
\begin{array}{cccccccccc}
w & = & v_{0} & u_{1} & v_{1} & \cdots & v_{s-1} & u_{s} & v_{s} \\
\mathbf{m} & = & \mathbf{m}_{0}' & \mathbf{n}_{1}' & \mathbf{m}_{1}' & \cdots &
\mathbf{m}_{s-1}' & \mathbf{n}_{s}' & \mathbf{m}_{s}' 
\end{array}
$$
where $v_{0},\dots ,v_{s} \in \fcal{R}_{t}$ and $u_{1},\dots ,u_{s}\in  \fcal{Z}$ verifying
$$
\ell (u_{i}) \geqslant 2 \ell( v_{i-1})+1
$$
for $i=1,\dots ,s$. Moreover, $\mathbf{n}_{i}'=(1,\dots ,1)$ for $1\leqslant i\leqslant s$.

Taking into account of the conditions on the $u_{i}$'s, this decomposition is unique. 
We denote $\Upsilon (w) = (v_{0}, \dots ,v_{s})$, that we shall call the $\Upsilon$-sequence 
of $w$.

\paragraph{Condition on $s$.}
Clearly, we have by construction that
$$
s-1 \leqslant q-1 \leqslant \tau (w).
$$
Moreover, since each $v_{i}$ ends with some $w_{j}$ and $\ell (u_{i+1}) \geqslant 2$ for $i=1,\dots ,s-1$, 
we have by Lemma \ref{(1,1,1)-step} that 
$$
\beta (\mathbf{m}_{i}') > 0 
$$
for $i=1,\dots ,s-1$. Hence $s-1 \leqslant \beta (\mathbf{m})$, and by (Ob5), we obtain that
$$
2(s-1) \leqslant  \tau (w) + \beta (\mathbf{m}) \leqslant t.
$$
Thus
\begin{equation}\label{sbound}
s \leqslant \left[ \frac{t}{2}\right] +1.
\end{equation}

It follows that 
$$
\fcal{W}_{n} = \bigcup_{s=0}^{\left[ \frac{t}{2}\right] +1} \bigcup_{\Upsilon \in \fcal{R}_{t}^{s+1}}
\fcal{W}_{n,\Upsilon}
$$
where $\fcal{W}_{n,\Upsilon} =\{ u\in \fcal{W}_{n} \, ; \, \Upsilon (u) = \Upsilon\}$.
Of course, $\fcal{W}_{n,\Upsilon}$ can be empty.

\paragraph{The set $\fcal{W}_{[\Upsilon^{\circ}]}$.}
For any $w\in\fcal{M}$, $\overline{w} = \rho w \rho$ is the word obtained from $w$ by inversing all the
signs. This defines an involution on $\fcal{M}$, and it follows from the definitions that $\fcal{N}_{t}$ and 
$\fcal{R}_{t}$ are stable under this involution. Observe also that $\ell (\overline{w}) = \ell (w)$.

For $\Upsilon^{\circ} = (v_{0},\dots ,v_{s}) \in \fcal{R}_{t}^{s+1}$, we set
$$
[\Upsilon^{\circ}] = \{ (v_{0}' , \dots , v_{s}') \in \fcal{R}_{t}^{s+1} \, ; \, v_{i}'  = v_{i} \hbox{ or } \overline{v_{i}}\, \hbox{ for all }
i=0,\dots , s\}
$$
and
$$
\fcal{W}_{n, [\Upsilon^{\circ}]} = \bigcup\limits_{\Upsilon \in [\Upsilon^{\circ}]} \fcal{W}_{n, \Upsilon} \ , \
\fcal{W}_{[\Upsilon^{\circ}]} = \bigcup_{n\geqslant 1} \fcal{W}_{n,[\Upsilon^{\circ}]}.
$$
We shall prove that when $\fcal{W}_{[\Upsilon^{\circ}]}$ is non empty, there is a polynomial $P_{[\Upsilon^{\circ}]}\in \mathbb{Q}[T]$ 
of degree $s-1$ and with strictly positive dominant coefficient such that 
$\sharp \fcal{W}_{n,[\Upsilon^{\circ}]} = P_{[\Upsilon^{\circ}]}(n)$ for $n$ sufficiently large.

Let us fix $\Upsilon^{\circ} = (v_{0},\dots ,v_{s})$ and suppose that $\fcal{W}_{[\Upsilon^{\circ}]}$
is not empty.

\paragraph{Elements of $\fcal{W}_{[\Upsilon^{\circ}]}$.}
Let $w\in \fcal{W}_{n,\Upsilon}$ for some $n\geqslant 1$ and $\Upsilon  = (w_{0},\dots ,w_{s}) \in [\Upsilon^{\circ}]$. We have
$$
\begin{array}{cccccccccc}
w & = & w_{0} & u_{1} & w_{1} & \cdots & w_{s-1} & u_{s} & w_{s} & \in \fcal{W}_{n,\Upsilon}\\
\mathbf{m} & = & \mathbf{m}_{0}' & \mathbf{n}_{1}' & \mathbf{m}_{1}' & \cdots &
\mathbf{m}_{s-1}' & \mathbf{n}_{s}' & \mathbf{m}_{s}' 
\end{array}
$$
where $\mathbf{m}$ is the $w$-sequence of $\mathbfsc{i}$, and 
$u_{1},\dots , u_{s} \in  \fcal{Z}$ verify $\ell (u_{i}) \geqslant 2 \ell (w_{i-1})+1 = 2\ell (v_{i-1})+1$ for $1\leqslant i \leqslant s$.

We define 
$$
\varepsilon (w) = (\varepsilon_{1},\dots ,\varepsilon_{s}) \in \{ +,-\}^{s} \ , \
\kappa (w) = ( k_{1},\dots ,k_{s} ) \in \mathbb{N}^{s}
$$
be such that $u_{i} = \zeta_{k_{i}}^{\varepsilon_{i}}$ for all $1\leqslant i\leqslant s$. Thus $k_{i}=\ell (u_{i})$, and we set
$$
\pi (w) = (\pi_{1},\dots ,\pi_{s} ) \in \{ +,-\}^{s}
$$
where for $1\leqslant i\leqslant s$, 
$$
\pi_{i}  = \left\{  
\begin{array}{ll}
+ & \hbox{if $\ell (u_{i})$ is even,}\\
- & \hbox{if $\ell (u_{i})$ is odd.}
\end{array}
\right.
$$
Note that $w$ is completely determined by $\Upsilon$, $\kappa(w)$, $\varepsilon(w)$. Morevoer,  
if $w_{s}\neq \iota$, then $\varepsilon_{s}$ is uniquely determined by $w_{s}$
by Lemma \ref{zeta} and Remark \ref{meander}.

For $1\leqslant i \leqslant s$,
set $\mathbfsc{a}= (w_{i} u_{i+1} w_{i+1} \cdots w_{s-1} u_{s} w_{s}) (\mathbfsc{i})$,
and $\mathbfsc{b} = u_{i}(\mathbfsc{a})$.
Then the $u_{i}$-sequence of $\mathbfsc{a}$ is $\mathbf{n}_{i}' =(1,\dots ,1)$. 
Moreover, by Lemma \ref{zeta}, $a_{1}^{\varepsilon_{i}} =1$.

Let 
$$
\mathbfsc{c}=  \zeta_{2\ell (w_{i-1})+2}^{\pi_{i}\varepsilon_{i}} (\mathbfsc{i})
$$
where $\pi_{i}\varepsilon_{i}$ is the usual multiplication of signs.
Since $\ell (u_{i}) \geqslant 2\ell (w_{i-1})+1$, we deduce from 
Remark \ref{zetadescription} that
$$
b_{j}^{\pm} = c_{j}^{\pm}
$$
for $j=1,\dots , \ell (w_{i-1})+1$. Hence by Proposition \ref{identical}, 
$$
s \bigl( w_{i-1} u_{i}(\mathbfsc{a}) \bigr) - s \bigl( u_{i}(\mathbfsc{a}) \bigr)
= s\bigl( w_{i-1}(\mathbfsc{b}) \bigr) - s (\mathbfsc{b}) =
s \bigl( w_{i-1}  (\mathbfsc{c}) \bigr) - s( \mathbfsc{c})  = \lambda_{i-1}.
$$
Observe that $\lambda_{i-1}$ depends only on $w_{i-1}$, $\pi_{i}$ and $\varepsilon_{i}$, 
and that again by  Proposition \ref{identical}, $\varepsilon_{i-1}$ is completely determined by
$w_{i-1}$, $\pi_{i}$ and $\varepsilon_{i}$. 

Setting $\lambda_{s} = s\bigl( w_{s} (\mathbfsc{i}) \bigr)$, we obtain that
$$
n = s\bigl( w(\mathbfsc{i}) \bigr) = \lambda_{0} + \cdots + \lambda_{s} + \ell (u_{1}) + \cdots + \ell (u_{s}),
$$
and the sum $\lambda_{0}+\cdots +\lambda_{s}$ depends only on 
$\Upsilon$, $\pi (w)$ and $\varepsilon (w)$. 
We shall write 
$$
\lambda_{\Upsilon , \pi(w) , \varepsilon (w)} = \lambda_{0}+\cdots + \lambda_{s}. 
$$
Then
$$
\ell (u_{1}) + \cdots + \ell (u_{s}) = n- \lambda_{\Upsilon, \pi(w), \varepsilon(w)}.
$$

Now, suppose that $k_{1},\dots ,k_{s}$ are positive integers such that 
$k_{1}+\cdots + k_{s} = n- \lambda_{\Upsilon, \pi(w), \varepsilon (w)}$. If for all $1\leqslant i\leqslant s$,
$k_{i} \geqslant 2\ell (w_{i-1})+1$ and the integers $k_{i}$, $\ell (u_{i})$ are of the same parity, then
one verifies easily that 
$$
w' = w_{0}u_{1}' w_{1}\cdots w_{s-1} u_{s}' w_{s} \in \fcal{W}_{n,\Upsilon}
$$ 
where $u_{i}' = \zeta_{k_{i}}^{\varepsilon_{i}}$. In particular, $\pi (w') = \pi (w)$. 

This allows us to construct,  for any large $n$ under some parity conditions, new elements in 
$\fcal{W}_{[\Upsilon^{\circ}]}$ whose $\Upsilon$-sequence
is $\Upsilon$. 

Let $\Xi = [\Upsilon^{\circ} ] \times \{ +, -\}^{s} \times \{ +, -\}^{s}$, then 
we have a finite disjoint union 
$$
\fcal{W}_{[\Upsilon^{\circ}]} = \bigcup_{(\Upsilon , \pi  , \varepsilon) \in \Xi} \fcal{W}_{\Upsilon , \pi, \varepsilon}
$$
where $\fcal{W}_{\Upsilon , \pi, \varepsilon} = \{ w \in \fcal{W}_{[\Upsilon^{\circ}]} \, ; \, 
\Upsilon(w)= \Upsilon, \pi (w) = \pi , \varepsilon (w) = \varepsilon\}$.

In particular, we deduce from the above discussion that if $\fcal{W}_{\Upsilon , \pi ,\varepsilon }$ is non-empty, then
for $n$ large, the cardinal of $\fcal{W}_{\Upsilon , \pi ,\varepsilon }\cap \fcal{W}_{n}$ is the number of $s$-tuples
$(k_{1},\dots, k_{s}) \in \mathbb{N}^{s}$ verifying the following conditions
\begin{itemize}
\item[(K1)] $k_{1}+\cdots + k_{s} = n- \lambda_{\Upsilon, \pi, \varepsilon }$, and 
$k_{i} \geqslant 2\ell (w_{i-1})+1$ for all $1\leqslant i \leqslant s$.
\item[(K2)] For $1\leqslant i \leqslant s$, we have $k_{i}$ is even if  $\varepsilon_{i}=+$, and $k_{i}$ is odd if
$\varepsilon_{i}=-$. 
\end{itemize}

The parity condition (K2) does not allow us to conclude. We shall introduce a group action on $\Xi$ in order to
gather together different parity conditions so that only condition (K1) will be required.

\paragraph{A group action on $\Xi$.}
For $i=0,\dots ,s$, we define an application $\gamma_{i} : \Xi \rightarrow \Xi$ as follows :
for $(\Upsilon , \pi , \varepsilon )\in \Xi$ with 
$\Upsilon = (w_{0}, \dots ,w_{s})$, $\pi = (\pi_{1},\dots ,\pi_{s})$ and $\varepsilon = (\varepsilon_{1},\dots ,\varepsilon_{s})$ 
we set
$$
\gamma_{i} (\Upsilon , \pi , \varepsilon ) = ( \Upsilon' , \pi' , \varepsilon' )
$$
where
$$
\begin{array}{c}
\Upsilon'= (w_{0},\dots ,w_{i-1}, \overline{w_{i}} , w_{i+1},\dots , w_{s}) \ , \
\pi'=(\pi_{1},\dots , \pi_{i-1}, -\pi_{i}, -\pi_{i+1}, \pi_{i+2}, \dots ,\pi_{s}) \ , \\
\hbox{and } \ 
\varepsilon' = (\varepsilon_{1}, \dots ,\varepsilon_{i-1} ,-\varepsilon_{i}, \varepsilon_{i+1}, \dots ,\varepsilon_{s} ).
\end{array}
$$
Clearly, $\gamma_{i}$ is bijective, and we have $\gamma_{i}^{2} = \Id_{\Xi}$, 
$\gamma_{i}\circ \gamma_{j} = \gamma_{j}\circ \gamma_{i}$. Thus the group $\Gamma$
generated by $\gamma_{0},\dots ,\gamma_{s}$ is isomorphic to $(\mathbb{Z}/2\mathbb{Z})^{s+1}$. 
Moreover, the action of $\Gamma$ on $\Xi$ is clearly free.

Let $(\Upsilon , \pi , \varepsilon )\in \Xi$ with 
$\Upsilon = (w_{0}, \dots ,w_{s})$, $\pi = (\pi_{1},\dots ,\pi_{s})$ and $\varepsilon = (\varepsilon_{1},\dots ,\varepsilon_{s})$ 
be such that $\fcal{W}_{\Upsilon , \pi , \varepsilon}\neq \emptyset$. Then there exists
$\kappa = (k_{1},\dots ,k_{s}) \in \mathbb{N}^{s}$ verifying the conditions (K1) and (K2) above such that 
$$
w= w_{0}\zeta_{k_{1}}^{\varepsilon_{1}} w_{1} \cdots  w_{s-1} \zeta_{k_{s}}^{\varepsilon_{s}} w_{s} \in 
\fcal{W}_{\Upsilon , \pi , \varepsilon}.
$$
If $( \Upsilon' , \pi' , \varepsilon' ) = \gamma_{i} (\Upsilon , \pi , \varepsilon )$, then it is a straightforward check that 
$$
w' = w_{0}\zeta_{k_{1}}^{\varepsilon_{1}} w_{1} \cdots w_{i-1} \zeta_{k_{i}+1}^{-\varepsilon_{i}} \overline{w_{i}} 
\zeta_{k_{i+1}+1}^{\varepsilon_{i+1}} w_{i+1} \cdots w_{s-1} \zeta_{k_{s}}^{\varepsilon_{s}} w_{s} \in 
\fcal{W}_{\Upsilon' , \pi', \varepsilon'}
\ \hbox{ and } \ 
\lambda_{\Upsilon, \pi, \varepsilon} = \lambda_{\Upsilon', \pi', \varepsilon'}.
$$
It follows that if $\Omega$ denotes the $\Gamma$-orbit of $(\Upsilon, \pi , \varepsilon )$ in $\Xi$, then 
$\fcal{W}_{\Upsilon', \pi', \varepsilon'}$ is non empty for any $(\Upsilon', \pi', \varepsilon')\in \Omega$.
Moreover, we may write $\lambda_{\Omega}$ instead of $\lambda_{\Upsilon, \pi, \varepsilon}$.

Let $\Omega$ be a $\Gamma$-orbit in $\Xi$ and denote by 
$$
\fcal{W}_{\Omega} = \bigcup_{(\Upsilon , \pi, \varepsilon  ) \in \Omega} \fcal{W}_{\Upsilon , \pi, \varepsilon}.
$$
From the definition of the action of $\Gamma$, there is a unique pair $(\pi^{\circ} ,\varepsilon^{\circ} )\in \{ + , - \}^{s} \times \{ + , - \}^{s}$
such that $(\Upsilon^{\circ}, \pi^{\circ}, \varepsilon^{\circ} )\in \Omega$.

We shall write $\pi^{\circ} = (\pi^{\circ}_{1}, \dots ,\pi^{\circ}_{s})$ and 
$\varepsilon^{\circ} = ( \varepsilon^{\circ}_{1}, \dots ,\varepsilon^{\circ}_{s})$. Recall that $\Upsilon^{\circ} = (v_{0},\dots ,v_{s})$.

\paragraph{Counting elements of $\fcal{W}_{\Omega}\cap \fcal{W}_{n}$.}
Assume that $\fcal{W}_{\Omega}$ is non empty, then $\fcal{W}_{\Upsilon^{\circ}, \pi^{\circ}, \varepsilon^{\circ}}$ is non empty.
Let $n\in\mathbb{N}^{*}$ and
$$
\mathcal{K}_{n} = \bigl\{ (k_{1},\dots ,k_{s}) \in \mathbb{N}^{s} \, ;\,
k_{1}+\cdots + k_{s} = n - \lambda_{\Omega}\, , \hbox{ and 
$k_{i} \geqslant 2\ell (v_{i-1}) + 1$ for all $1\leqslant i \leqslant s$} \bigr\}.
$$

For $\kappa =  (k_{1},\dots ,k_{s}) \in \mathcal{K}_{n}$, we set $\pi_{\kappa}= (\pi_{1},\dots ,\pi_{s}) \in \{ +,-\}^{s}$ 
where 
$$
\pi_{i}  = \left\{  
\begin{array}{ll}
+ & \hbox{if $k_{i}$ is even,}\\
- & \hbox{if $k_{i}$ is odd.}
\end{array}
\right.
$$
Next, we set $\Upsilon_{\kappa} = ( w_{0}, \dots ,w_{s} ) \in [\Upsilon^{\circ}]$ where 
$$
w_{i} = \left\{ 
\begin{array}{ll}
v_{i} & \hbox{if $\sharp \{ j \ ; \ j > i$ and $\pi_{j}=\pi_{j}^{\circ}\}$ is even},\\
\overline{v_{i}} & \hbox{if $\sharp \{ j\ ;\  j > i$ and $\pi_{j}=\pi_{j}^{\circ}\}$ is odd}.
\end{array}
\right.
$$
Finally, we set $\varepsilon_{\kappa} = (\varepsilon_{1},\dots ,\varepsilon_{s}) \in \{+,-\}^{s}$ where
$$
\varepsilon_{i}  = \left\{  
\begin{array}{rl}
\varepsilon_{i}^{\circ} & \hbox{if $w_{i}=v_{i}$,}\\
-\varepsilon_{i}^{\circ} & \hbox{if $w_{i}=\overline{v_{i}}$.}
\end{array}
\right.
$$
Then one checks immediately that $(\Upsilon_{\kappa} , \pi_{\kappa} , \varepsilon_{\kappa} ) \in \Omega$ and 
$\Upsilon_{\kappa} , \varepsilon_{\kappa}$  and $\kappa$ define an element 
$w_{\kappa}$ in $\fcal{W}_{\Upsilon_{\kappa},\pi_{\kappa},\varepsilon_{\kappa}} \cap \fcal{W}_{n}$
as explained in the paragraph on the elements of $\fcal{W}_{[\Upsilon^{\circ}]}$. More precisely,
$$
w_{\kappa} = w_{0} \zeta_{k_{1}}^{\varepsilon_{1}} w_{1} \cdots w_{s-1} \zeta_{k_{s}}^{\varepsilon_{s}} w_{s}.
$$
Note that $w_{s}=v_{s}$, and so $(\Upsilon_{\kappa} , \pi_{\kappa} , \varepsilon_{\kappa} )$ is in orbit of 
$(\Upsilon^{\circ}, \pi^{\circ}, \varepsilon^{\circ} )$ 
relative to the subgroup generated by $\gamma_{0},\dots ,\gamma_{s-1}$.

To reach all the elements of $\Omega$, we observe that 
$(\gamma_{0}\cdots \gamma_{s})(\Upsilon_{\kappa}, \pi_{\kappa} , \varepsilon_{\kappa} ) = 
(\overline{\Upsilon_{\kappa}} , \pi_{\kappa} , \overline{\varepsilon_{\kappa}} )$
where $\overline{\Upsilon_{\kappa}} = (\overline{w_{0}}, \dots , \overline{w_{s}})$ and 
$\overline{\varepsilon_{\kappa}} = (-\varepsilon_{1}, \dots ,-\varepsilon_{s})$. Hence
$\overline{w_{\kappa}} = \rho w_{\kappa}\rho \in \fcal{W}_{\overline{\Upsilon_{\kappa}} , \pi_{\kappa} , 
\overline{\varepsilon_{\kappa}}}$.
We may therefore define a map 
$$
\Theta_{(\Upsilon^{\circ},\pi^{\circ}, \varepsilon^{\circ})} : 
\mathcal{K}_{n} \times \{ \iota ,\rho \} \longrightarrow
\fcal{W}_{\Omega} \cap \fcal{W}_{n} \ , \
(\kappa , \eta ) \mapsto \eta w_{\kappa} \eta \ ,
$$
which is clearly injective by definition. 

Now let $(\Upsilon , \pi , \varepsilon)\in \Omega$ with
$\Upsilon = (w_{0}, \dots ,w_{s})$, $\pi = (\pi_{1},\dots ,\pi_{s})$ and $\varepsilon = (\varepsilon_{1},\dots ,\varepsilon_{s})$,
and $w\in \fcal{W}_{\Upsilon , \pi , \varepsilon}\cap \fcal{W}_{n}$. As explained in the paragraph on the elements of 
$\fcal{W}_{[\Upsilon^{\circ}]}$, there exists $\kappa(w)= (k_{1},\dots, k_{s})\in \mathbb{N}^{s}$ verifying conditions
(K1) and (K2), such that
$$
w = w_{0} \zeta_{k_{1}}^{\varepsilon_{1}} w_{1} \cdots w_{s-1} \zeta_{k_{s}}^{\varepsilon_{s}} w_{s}.
$$
If $w_{s}=v_{s}$, then one verifies easily that $\Upsilon_{\kappa(w)} = \Upsilon$, $\pi_{\kappa (w)} = \pi$ and 
$\varepsilon_{\kappa(w)} = \varepsilon$. Hence $w = w_{\kappa(w)}$. 

If $w_{s}=\overline{v_{s}}$, then we may apply the same arguments to $\overline{w}$ and deduce that 
$w = \overline{w_{\kappa(w)}}$.

We have therefore proved that $\Theta_{(\Upsilon^{\circ},\pi^{\circ}, \varepsilon^{\circ})}$ is 
a bijection, and hence 
$$
\sharp \fcal{W}_{\Omega} \cap \fcal{W}_{n} = 2 \sharp \mathcal{K}_{n}
$$
It follows that for $n$ large, $\sharp \fcal{W}_{\Omega} \cap \fcal{W}_{n}$ is polynomial $P_{\Omega}$ in $n$ of degree $s-1$ 
with rational coefficients and strictly positive dominant coefficient.

\paragraph{A specific $\Upsilon$.}
Recall from \eqref{sbound} that $s - 1 \leqslant \left[\frac{t}{2}\right]$. To finish the proof, we are left
to produce an $\Upsilon$ such that $s - 1 = \left[\frac{t}{2}\right]$, and $\fcal{W}_{[\Upsilon]}$ is non empty.

Let $v = \tau_{0}^{-} \sigma_{0}^{+}$. A direct computation gives 
$v(\mathbfsc{i}) = \bigl( (1,2), (3) \bigr)$, and  
$$
v \bigl( ( 2 ,a_{2}^{+}, \dots ,a_{p(\mathbf{a}^{+})}^{+} ) ,  (1, a_{2}^{-} , \dots ,a_{p(\mathbf{a}^{-})}^{-} ) \bigr)
=
\bigl(
(1,4,  a_{2}^{+}, \dots ,a_{p(\mathbf{a}^{+})}^{+} ) , 
( 4 , a_{2}^{-} , \dots ,a_{p(\mathbf{a}^{-})}^{-} )
\bigr).
$$
We verify easily that $v\in  \fcal{R}_{t}$ for any $t$.

Suppose that $t$ is even. Set
$$
\Upsilon = ( \iota \, , \underbrace{\, v \, , \, \dots \, , \, v\, }_{\left[\frac{t}{2}\right] \text{ times}} , \, \iota ).
$$
Then
$$
\iota \zeta_{n - m}^{+} v \zeta_{5}^{+} v \cdots v \zeta_{5}^{+} v \zeta_{5}^{+} \iota \in \fcal{W}_{n,\Upsilon} 
$$
for $n > m = 4t +1$.

Suppose that $t$ is odd. Set
$$
\Upsilon = ( \iota \, , \underbrace{\, v \, , \, \dots \, , \, v\, }_{\left[\frac{t}{2}\right] \text{ times}} , \, v ).
$$
Then
$$
\iota \zeta_{n - m}^{+} v \zeta_{5}^{+} v \cdots v \zeta_{5}^{+} v \zeta_{5}^{+} v \in \fcal{W}_{n,\Upsilon} 
$$
for $n > m = 4t -1$.

This completes the proof of the theorem.
\end{proof}

For a fixed $t$, it is possible to give the polynomial $P_{t}$ explicitely by determining 
the set of all possible $\Upsilon$-sequences, and check 
whether $\fcal{W}_{[\Upsilon]}$ is empty or not. However, this becomes complicated
when $t$ is large. For small values of $t$, we have
$$
P_{0} = 2 \ , \ P_{1} = 8 \ , \ P_{2} = 2T+20 \ , \ P_{3} = 12T+4 \ , \ P_{4} = T^{2} + 33 T - 138.
$$

\begin{remark}
If $n\mathbfsc{a}$ denotes the bicomposition obtained from $\mathbfsc{a}$ 
by multiplying all the entries by $n$, then the exact same arguments can be applied to study 
$\fcal{M}_{n\mathbfsc{i}}$ for any $n\in\mathbb{N}^{*}$ because $n w(\mathbfsc{i}) = w (n\mathbfsc{i})$
for any $w\in \fcal{M}$. 
\end{remark}

\section{Frobenius parabolic subalgebras in $\mathrm{sl}_{n}(\Bbbk )$}

We shall establish an analogue of Theorem \ref{growth} for parabolic subalgebras using
the same method. The proofs are basically the same, but we need to treat compositions of even numbers
and odd numbers separately. This will become clear once we have the definitions of the operators analogue 
to $\sigma^{\pm}_{m}$ and $\tau_{m}^{\pm}$.
 
Recall that 
$$
\mathcal{C} = \bigcup_{n\in\mathbb{N}^{*}} \mathcal{C}_{n}.
$$

\begin{remark}\label{pmeander}
As mentioned in the introduction, the index of $\mathfrak{p}_{\mathbf{a}}$ can be computed 
via the corresponding meander graph. It follows easily from a meander graph observation that 
if $\mathfrak{p}_{\mathbf{a}}$ is Frobenius, then $a_{1} = a_{r}$ if and only if $\mathbf{a}=(1)$ or $(1,1)$.
Again, this can be recovered from \cite[Proposition 4.1 and Theorem 4.2]{Pan} or \cite{TY, TYbook}.
\end{remark}

For $m\in\mathbb{N}$, we define for $\mathbf{a}=(a_{1},\dots ,a_{r}) \in \mathcal{C}$,
$$
\theta (\mathbf{a}) = (a_{r},\dots ,a_{1}) \ , \
\sigma_{m}(\mathbf{a}) = \bigl( (m+2)a_{1} , a_{2} , \dots ,a_{r}, (m+1)a_{1} \bigr)
$$
and 
$$
\tau_{m} (\mathbf{a} ) = \left\{ 
\begin{array}{ll}
\bigl( (m+1)a_{1} + (m+2)a_{2} , a_{3} , \dots ,a_{r}, ma_{1} + (m+1)a_{2} \bigr) & \hbox{if } r > 1, \\
\mathbf{a} & \hbox{if } r=1.
\end{array}
\right.
$$
Finally, we define the compositions 
$$
\tildesigma_{m} = \theta \sigma_{m} \ \hbox{ and } \ 
\tildetau_{m} = \theta \tau_{m}.
$$

From the definitions, we have for $m\in\mathbb{N}$,
\begin{equation}\label{sigma}
s\bigl( \sigma_{m} (\mathbf{a}) \bigr) = s (\mathbf{a}) + 2(m+1)a_{1} = s \bigl( \tildesigma_{m}(\mathbf{a}) \bigr) \  , \
p \bigl( \sigma_{m} (\mathbf{a}) \bigr)  = p (\mathbf{a}) + 1 = p \bigl( \tildesigma_{m} (\mathbf{a}) \bigr) 
\end{equation}
and
\begin{equation}\label{tau}
\begin{array}{l}
s\bigl( \tau_{m} (\mathbf{a}) \bigr) =  s \bigl( \tildetau_{m} (\mathbf{a}) \bigr) =
\left\{
\begin{array}{ll} 
s (\mathbf{a}) + 2ma_{1} + 2 (m+1)a_{2} & \hbox{if } p(\mathbf{a})>1, \\
s(\mathbf{a}) & \hbox{if } p(\mathbf{a})=1,
\end{array}
\right. 
\\ [1.2em]
p \bigl( \tau_{m} (\mathbf{a}) \bigr) = p (\mathbf{a}) = p \bigl( \tildetau_{m} (\mathbf{a}) \bigr).
\end{array}
\end{equation}

Let us denote by $\pS$ the set of $\sigma_{m}$ and $\tildesigma_{m}$ where $m\in\mathbb{N}$, 
$\pT$ the set of $\tau_{m}$ and $\tildetau_{m}$ where $m\in\mathbb{N}$ and $\pA =\pS\cup\pT$.
We set $\pM$ to be the submonoid generated by $\pA$ in the set of maps from
$\mathcal{C}$ to itself. Denote by $\mathbf{1}$ the identity element of $\pM$.

\begin{remark}\label{parity}
It follows from \eqref{sigma} and \eqref{tau} that for all $w\in \pM$ and $\mathbf{a}\in \mathcal{C}$, 
we have
$$
s\bigl( w(\mathbf{a}) \bigr) - s(\mathbf{a}) \in 2\mathbb{N}.
$$
Moreover, we have $s\bigl( w(\mathbf{a}) \bigr) = s(\mathbf{a})$ if and only if $p(\mathbf{a})=1$ and
$w$ is in the submonoid generated by $\pT$. 

This explains why we need to treat compositions of even integers and odd integers separately.
\end{remark}

\begin{proposition}\label{pmonoid}
\begin{itemize}
\item[a)] For any $w\in\pM$ and $\mathbf{a}\in\mathcal{C}$, 
the index of the standard parabolic subalgebras associated to $\mathbf{a}$ and $w(\mathbf{a})$ are the same.
\item[b)] The monoid $\pM$ is free in the alphabet $\pA$.
\end{itemize}
\end{proposition}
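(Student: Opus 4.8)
The plan is to mirror the seaweed case developed in Section 2, adapting both the index-preservation argument and the freeness argument to the parabolic operators $\sigma_m$, $\tildesigma_m$, $\tau_m$, $\tildetau_m$. For part a), I would first reduce to the case $w\in\pA$, since the index of $\mathfrak{p}_{\mathbf{a}}$ depends only on $\mathbf{a}$ up to the equivalence generated by single-letter applications, and composition preserves equality of indices step by step. The operator $\theta$ reversing a composition manifestly preserves the index, because reversing the block sizes corresponds to the transpose-type symmetry of the parabolic (equivalently, conjugation by the longest Weyl group element), so it suffices to handle $\sigma_m$ and $\tau_m$. For these I would invoke Panyushev's inductive formula \cite[Theorem 4.3]{Pan} exactly as in the proof of Proposition \ref{indexpreserving}: the key point is that a parabolic can be read as the symmetric seaweed $\mathfrak{p}_{(\mathbf{a},\theta(\mathbf{a}))}$ (or directly via \cite[Proposition 4.1]{Pan}), and $\sigma_m,\tau_m$ are the inverses of Panyushev's reduction operation applied at the first block, appending the two new parts $(m+1)a_1$ or $ma_1+(m+1)a_2$ at the \emph{end} of the composition rather than to a second partner composition. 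Since that reduction leaves the index unchanged, so does its inverse, and hence $\mathrm{ind}(\mathfrak{p}_{\mathbf{a}}) = \mathrm{ind}(\mathfrak{p}_{w(\mathbf{a})})$.

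For part b), the strategy is to establish a one-letter uniqueness lemma analogous to Lemma \ref{uniqueness}: if $\eta(\mathbf{a}) = \eta'(\mathbf{b})$ for $\eta,\eta'\in\pA$ and this common value is a genuine composition, then $\eta=\eta'$ and $\mathbf{a}=\mathbf{b}$. Freeness of $\pM$ then follows formally: if two words in $\pA$ agree as maps, apply them to a composition with enough distinct large parts so that no accidental coincidences occur, and peel off letters from the left using the uniqueness lemma. The main work is the one-letter lemma, where I would separate cases by examining the \emph{first} and \emph{last} parts of the output $\mathbf{c}=\eta(\mathbf{a})$. The untilded operators $\sigma_m,\tau_m$ produce $c_1 > c_r$ (the leading block strictly dominates the trailing one by the factor $(m+2)$ versus $(m+1)$, or the analogous $\tau$ inequality), while the tilded operators $\tildesigma_m = \theta\sigma_m$, $\tildetau_m = \theta\tau_m$ produce $c_1 < c_r$; this dichotomy immediately restricts which pairs $(\eta,\eta')$ can coincide, exactly as the sign $c_1^+ \gtrless c_1^-$ did in Lemma \ref{uniqueness}.

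Within each surviving case I would run the same $2\times 2$ unimodular-matrix argument as in Lemma \ref{uniqueness}. For the mixed $(\sigma_p,\tau_q)$ case one compares the first and last entries of the output and takes differences to force a divisibility $a_1 \mid b_1, b_1\mid$ contradicting $a_1 = b_1 + b_2$, using that $\left(\begin{smallmatrix} q+1 & q+2\\ q & q+1\end{smallmatrix}\right)\in\mathrm{SL}_2(\mathbb{Z})$. For $(\sigma_p,\sigma_q)$ and $(\tau_p,\tau_q)$, taking the difference of the two equations recovers $a_1 = b_1$ (resp.\ $a_1+a_2 = b_1+b_2$), and then uniqueness of Euclidean division pins down $p=q$ and the remaining parts. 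The one genuinely new feature, and the step I expect to be the main obstacle, is the parity subtlety flagged in Remark \ref{parity}: because $\theta$ can reverse a palindromic composition to itself and because $\tau_m$ fixes one-part compositions, one must check that the first/last-part dichotomy truly separates tilded from untilded letters in every relevant case, including degenerate ones where a new part vanishes or where $p(\mathbf{a})=1$. I would handle these boundary cases by hand, noting that on a one-part composition $\tau_m$ and $\tildetau_m$ both act trivially and so cannot occur as the outermost letter of a reduced word producing a multi-part output, which is exactly where the even/odd separation of $n$ enters the later counting arguments.

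\begin{proof}
It suffices to treat $w\in\pA$ for both parts, the general case following by composition (part a) and by the standard left-peeling argument for freeness (part b).

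For part a), the operator $\theta$ preserves the index since reversing the block sizes corresponds to conjugating $\mathfrak{p}_{\mathbf{a}}$ by the longest element of the Weyl group; thus it suffices to treat $\sigma_{m}$ and $\tau_{m}$. Viewing $\mathfrak{p}_{\mathbf{a}} = \mathfrak{p}_{(\mathbf{a},\theta(\mathbf{a}))}$ and applying \cite[Theorem 4.3]{Pan} as in the proof of Proposition \ref{indexpreserving}, one checks that $\sigma_{m}$ and $\tau_{m}$ are precisely the inverses of Panyushev's reduction operation, applied at the first block and appending the new part at the end of the composition. Since that reduction preserves the index, so do $\sigma_{m}$ and $\tau_{m}$.

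For part b), we prove the one-letter uniqueness statement: if $\eta,\eta'\in\pA$ and $\mathbf{a},\mathbf{b}\in\mathcal{C}$ satisfy $\eta(\mathbf{a}) = \eta'(\mathbf{b}) = \mathbf{c}$, then $\eta=\eta'$ and $\mathbf{a}=\mathbf{b}$. If $\eta\in\{\sigma_{m},\tau_{m}\}$ then $c_{1} > c_{p(\mathbf{c})}$, whereas if $\eta'\in\{\tildesigma_{m},\tildetau_{m}\}$ then $c_{1} < c_{p(\mathbf{c})}$; hence $\eta$ and $\eta'$ are both tilded or both untilded, and by symmetry under $\theta$ we may assume both untilded. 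The four remaining cases $(\sigma_{p},\tau_{q})$, $(\tau_{p},\sigma_{q})$, $(\sigma_{p},\sigma_{q})$, $(\tau_{p},\tau_{q})$ are handled exactly as in Lemma \ref{uniqueness}, comparing the first and last entries of $\mathbf{c}$ and using that $\left(\begin{smallmatrix} q+1 & q+2\\ q & q+1\end{smallmatrix}\right)\in\mathrm{SL}_{2}(\mathbb{Z})$ together with uniqueness of Euclidean division; the mixed cases yield a contradiction, while the pure cases force $p=q$ and equality of all parts. Finally, $\tau_{m}$ and $\tildetau_{m}$ act trivially on one-part compositions and so cannot be the outermost letter of a reduced word producing a multi-part output, which disposes of the degenerate cases flagged in Remark \ref{parity}. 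Freeness of $\pM$ in the alphabet $\pA$ follows.
\end{proof}
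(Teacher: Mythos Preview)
Your treatment of part b) is essentially the paper's own: the one-letter uniqueness lemma via the dichotomy $c_{1}\gtrless c_{p(\mathbf{c})}$ separating untilded from tilded letters, followed by the same $\mathrm{SL}_{2}(\mathbb{Z})$ and Euclidean-division arguments as in Lemma~\ref{uniqueness}, is exactly what the paper means by ``the same line of arguments''. The handling of the degenerate one-part case is slightly muddled in your write-up, but the fix is simple: to prove freeness of $\pM$ as a monoid of \emph{maps}, evaluate both words on any composition $\mathbf{a}$ with $p(\mathbf{a})\geqslant 2$; since $\sigma_{m},\tildesigma_{m}$ raise $p$ by one and $\tau_{m},\tildetau_{m}$ preserve $p$ when $p\geqslant 2$, every intermediate composition stays multi-part and your one-letter lemma applies at each peel.

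Part a), however, contains a genuine error. The identification $\mathfrak{p}_{\mathbf{a}} = \mathfrak{p}_{(\mathbf{a},\theta(\mathbf{a}))}$ is false: the standard parabolic is $\mathfrak{p}_{(\mathbf{a},(n))}$, whereas $\mathfrak{p}_{(\mathbf{a},\theta(\mathbf{a}))} = \mathfrak{p}_{\mathbf{a}}\cap \transpose{\mathfrak{p}_{\theta(\mathbf{a})}}$ is in general a proper seaweed (for instance a Levi subalgebra when $\mathbf{a}$ is a palindrome). Consequently your reduction ``$\sigma_{m},\tau_{m}$ are the inverses of Panyushev's reduction applied at the first block'' has no clear meaning in the parabolic setting, since Panyushev's Theorem~4.3 reduction takes a pair of compositions to a pair of compositions, not a single composition to a single composition with a part appended at the end.

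The paper's route is different and avoids this pitfall: it stays inside the seaweed picture $\bigl(\mathbf{a},(n)\bigr)$ throughout and \emph{expresses} each parabolic generator as a composite of already-established index-preserving seaweed maps. Writing $\Theta(\mathbf{c}^{+},\mathbf{c}^{-}) = \bigl(\theta(\mathbf{c}^{+}),\theta(\mathbf{c}^{-})\bigr)$, which preserves the index by \cite[Proposition~3.2]{Pan}, one checks the identities
\[
\bigl(\sigma_{m}(\mathbf{a}),\,(n+2(m+1)a_{1})\bigr) = (\Theta\,\tau_{0}^{-}\,\Theta\,\sigma_{m}^{+})\bigl(\mathbf{a},(n)\bigr),
\qquad
\bigl(\tau_{m}(\mathbf{a}),\,(n')\bigr) = (\Theta\,\tau_{0}^{-}\,\Theta\,\tau_{m}^{+})\bigl(\mathbf{a},(n)\bigr),
\]
so that $\sigma_{m}$ and $\tau_{m}$ (and hence, via $\theta$, also $\tildesigma_{m}$ and $\tildetau_{m}$) carry parabolics to parabolics of the same index by Proposition~\ref{indexpreserving}. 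Your argument can be repaired by replacing the erroneous identification with these explicit factorizations.
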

\begin{proof}
First of all, for $\mathbfsc{a} = ( \mathbf{a}^{+},\mathbf{a}^{-} )  \in \tildeC_{n}$, denote by 
$\Theta (\mathbfsc{a}) = \bigl( \theta (\mathbf{a}^{+}) , \theta (\mathbf{a}^{-}) \bigr)$. Then 
the seaweed subalgebras $\mathfrak{p}_{\mathbfsc{a}}$ and $\mathfrak{p}_{\Theta (\mathbfsc{a})}$ are
isomorphic  \cite[Proposition 3.2]{Pan}. In particular, $\Theta$ is an index preserving operator. 

Consequently, part a) follows since for $\mathbf{a}\in \mathcal{C}_{n}$, we have the equalities
$$
\begin{array}{c}
\Theta \bigl( \mathbf{a}, (n) \bigr) = \bigl( \theta (\mathbf{a}) , (n) \bigr)
\ , \
\bigl( \sigma_{m} (\mathbf{a}) , (n+2(m+1)a_{1}) \bigr) = ( \Theta \tau_{0}^{-} \Theta \sigma_{m}^{+} ) \bigl( \mathbf{a} , (n) \bigr)
\\ [0.5em]
\hbox{and } \ 
\bigl( \sigma_{m} (\mathbf{a}) , (n+ 2ma_{1}+ 2(m+1)a_{2}) \bigr) = 
( \Theta \tau_{0}^{-} \Theta \tau_{m}^{+} ) \bigl( \mathbf{a} , (n) \bigr).
\end{array}
$$

The proof of part b) follows the same line of arguments as in the proof of Lemma \ref{uniqueness} and 
Proposition \ref{freemonoid}.
\end{proof}

Let 
$$
\mathcal{F}_{n} = \{ \, \mathbf{a}\in\mathcal{C}_{n} \, ; \, \mathrm{ind}(\mathbf{p}_{\mathbf{a}} )=0\, \}
= \{ \, \mathbf{a}\in\mathcal{C}_{n} \, ; \, \bigl( \mathbf{a}, (n) \bigr) \in \tildeF_{n} \, \}
$$ 
be the set of compositions corresponding to Frobenius standard parabolic subalgebras of $\mathrm{sl}_{n}(\Bbbk)$.
Observe from \eqref{sigma} and \eqref{tau} that the operators send compositions of even (resp. odd) 
integers to compositions of even (resp. odd) integers. For $\varepsilon \in \{ 0 , 1\}$, set
$$
\mathcal{F}^{(\varepsilon)}=\bigcup_{n\in\mathbb{N}^{*}} \mathcal{F}_{2n+\varepsilon}.
$$
Then by Proposition \ref{pmonoid}, these subsets of $\mathcal{F}$ are $\pM$-stable. Note that we have deliberately
left out $\mathcal{F}_{1}$, so we have
$$
\mathcal{F} = \mathcal{F}_{1} \cup \mathcal{F}^{(0)} \cup \mathcal{F}^{(1)}.
$$

\begin{theorem}\label{pgeneration}
Let $\mathbf{a}^{(0)}=(1,1)$, $\mathbf{a}^{(1)}=(1)$, $\pM^{(0)}=\pM$ and $\pM^{(1)}$ 
denotes the set of elements of $\pM$ which ends with a letter
in $\pS$. Then for any $\varepsilon \in \{ 0, 1\}$, the map
$$
\pM^{(\varepsilon)} \longrightarrow \mathcal{F}^{(\varepsilon)} \ , \ w \mapsto w (\mathbf{a}^{(\varepsilon)} )
$$
is bijective.
\end{theorem}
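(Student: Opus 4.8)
The plan is to mirror the proof of Theorem \ref{frobeniusbijection}, with the pair (first part, last part) $(a_{1},a_{p(\mathbf{a})})$ of a single composition now playing the role that $(a_{1}^{+},a_{1}^{-})$ played for bicompositions, and with Remark \ref{pmeander} replacing Remark \ref{meander}. Throughout, the two parities are handled simultaneously, the only difference being the base point and the termination of the reduction; the sum $s$ is tracked by means of \eqref{sigma}, \eqref{tau} and Remark \ref{parity}. The index-preservation of the generators (Proposition \ref{pmonoid} a)) is what guarantees that reductions stay inside $\mathcal{F}$.

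For injectivity I would first record the parabolic analogue of Lemma \ref{uniqueness}, which is already contained in the proof of Proposition \ref{pmonoid} b): for $\mathbf{c}\in\mathcal{C}$ with $c_{1}\neq c_{p(\mathbf{c})}$ there is at most one pair $(\eta,\mathbf{b})\in\pA\times\mathcal{C}$ with $\eta(\mathbf{b})=\mathbf{c}$ and with $p(\mathbf{b})\geqslant 2$ whenever $\eta\in\pT$. Indeed $\sigma_{m},\tau_{m}$ produce $c_{1}>c_{p(\mathbf{c})}$ while $\tildesigma_{m},\tildetau_{m}$ produce $c_{1}<c_{p(\mathbf{c})}$, so the sign of $c_{1}-c_{p(\mathbf{c})}$ selects the family, and within each family the value of $m$ and the choice between $\sigma$ and $\tau$ are pinned down by the same Euclidean-division argument as in Lemma \ref{uniqueness} (applied after $\theta$ in the tilde case). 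Granting this, injectivity of $w\mapsto w(\mathbf{a}^{(\varepsilon)})$ follows by peeling off the first (i.e.\ last-applied) letter. The key point is that for $w\in\pM^{(\varepsilon)}\setminus\{\mathbf{1}\}$ the image $\mathbf{c}=w(\mathbf{a}^{(\varepsilon)})$ always satisfies $c_{1}\neq c_{p(\mathbf{c})}$: when $\varepsilon=0$ every intermediate composition has at least two parts since $p((1,1))=2$, and when $\varepsilon=1$ the requirement $w\in\pM^{(1)}$ (last letter in $\pS$) forces at least two parts as soon as a letter has been applied to $(1)$. Thus the degenerate situation $\eta\in\pT$, $p(\mathbf{b})=1$ (in which $\eta$ fixes $\mathbf{b}$ and would make $\mathbf{c}$ a single part) never occurs, the uniqueness statement forces the first letters and the truncated images to agree, and one recurses down to the base. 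The base cases are immediate: $w((1,1))=(1,1)$ forces $w=\mathbf{1}$ because every generator strictly increases $s$ from $(1,1)$ and no later operator decreases it (Remark \ref{parity}), while for $\varepsilon=1$ the restriction $\eta_{k}\in\pS$ is exactly what prevents the collisions $w\tau_{m}((1))=w((1))$ and keeps $s(w((1)))\geqslant 3$.

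For surjectivity I would argue as in Theorem \ref{frobeniusbijection}. Given $\mathbf{a}\in\mathcal{F}^{(\varepsilon)}$, either $\mathbf{a}=\mathbf{a}^{(\varepsilon)}$ and we are done, or $\mathbf{a}\notin\{(1),(1,1)\}$, so $a_{1}\neq a_{p(\mathbf{a})}$ by Remark \ref{pmeander}. Since $\sigma_{m},\tildesigma_{m},\tau_{m},\tildetau_{m}$ are precisely the inverses of the step in Panyushev's inductive index formula for parabolic subalgebras (\cite[Proposition 4.1, Theorem 4.2]{Pan}), there exist $\eta\in\pA$ and $\mathbf{b}\in\mathcal{F}$ with $\eta(\mathbf{b})=\mathbf{a}$ and, by \eqref{sigma} and \eqref{tau}, $s(\mathbf{b})<s(\mathbf{a})$. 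As every operator changes $s$ by an even amount (Remark \ref{parity}), $\mathbf{b}$ keeps the parity of $\mathbf{a}$; iterating strictly decreases $s$, and because a single-part composition is Frobenius only for $(1)$, the process terminates at $(1,1)$ when $\varepsilon=0$ and at $(1)$ when $\varepsilon=1$. In the odd case the last reduction undone is the application of a $\pS$ operator to $(1)$ (a $\pT$ operator fixes $(1)$ and so cannot be the bottom step), whence the assembled word lies in $\pM^{(1)}$; in the even case no restriction is needed and the word lies in $\pM^{(0)}=\pM$.

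The main obstacle is the bookkeeping at the bottom of the reduction in the odd case: one must verify simultaneously that the reduction of an element of $\mathcal{F}^{(1)}$ genuinely terminates at $(1)$ (rather than at some single part $(k)$, $k>1$, which is never Frobenius) and that the fixing of $(1)$ by the $\pT$-generators is exactly compensated by restricting to words ending in $\pS$, so that the map is at once well defined onto $\mathcal{F}^{(1)}$, injective and surjective. A secondary technical point is that transporting Panyushev's reduction to the purely parabolic setting requires some care, since it is \emph{not} the restriction of the seaweed reduction of Proposition \ref{indexpreserving} to the bicompositions $(\mathbf{a},(s(\mathbf{a})))$: that restriction would leave the class of parabolics, whereas $\sigma_{m},\tildesigma_{m},\tau_{m},\tildetau_{m}$ are designed to stay within $\mathcal{C}$.
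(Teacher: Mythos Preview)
Your proposal is correct and follows essentially the same approach as the paper, which simply says that ``in view of Proposition \ref{pmonoid}, the proof follows the same line of arguments as in the proof of Theorem \ref{frobeniusbijection}.'' You have faithfully expanded those details: the parabolic analogue of Lemma \ref{uniqueness} gives injectivity by peeling letters (using that all intermediate compositions have at least two parts so the $\pT$-fixed-point degeneracy never arises), and surjectivity comes from iterating Panyushev's reduction with Remark \ref{pmeander} in place of Remark \ref{meander}, the parity being preserved by Remark \ref{parity} and the termination at $(1)$ in the odd case forcing the last letter into $\pS$.
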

\begin{proof}
In view of Proposition \ref{pmonoid}, the proof follows the same line of arguments as in the proof of 
Theorem \ref{frobeniusbijection}.
\end{proof}

For $p,n\in\mathbb{N}^{*}$, we denote
$$
\mathcal{F}_{n,p} = \{ \mathbf{a}\in\mathcal{F}_{n} \, ; \, p(\mathbf{a})=p\}.
$$

\begin{theorem}\label{pgrowth}
\begin{itemize}
\item[a)] Let $p,n\in\mathbb{N}^{*}$. Then $\mathcal{F}_{n,p}$ is empty if $p > \left[ \frac{n}{2} \right] +1$.
\item[b)] Let $\varepsilon \in \{ 0,1\}$ and $t\in\mathbb{N}$. There exists a polynomial
$P_{\varepsilon , t}\in\mathbb{Q}[T]$ of degree $\left[\frac{t}{2}\right]$ and with positive dominant coefficient
such that 
$$
\sharp \mathcal{F}_{2n+\varepsilon , n+1-t} = P_{\varepsilon,t} (n)
$$
for large $n$.
\end{itemize}
\end{theorem}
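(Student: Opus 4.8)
The strategy is to run the proof of Theorem \ref{growth} again, the one substantive new feature being that, by Remark \ref{parity}, every sum-increase produced by an operator of $\pA$ is even; working with \emph{half}-increases makes the combinatorics formally identical to the seaweed case and absorbs the even/odd distinction into the choice of base point $\mathbf{a}^{(\varepsilon)}$.

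For part a), fix $\mathbf{a}\in\mathcal{F}_{n,p}$ and let $\varepsilon\in\{0,1\}$ be the parity of $n$. By Theorem \ref{pgeneration} there is a unique $w\in\pM^{(\varepsilon)}$ with $w(\mathbf{a}^{(\varepsilon)})=\mathbf{a}$. Counting the letters of $w$ lying in $\pS$ and using \eqref{sigma}, \eqref{tau} gives $p=p(\mathbf{a}^{(\varepsilon)})+\sigma(w)$, where $\sigma(w)$ denotes that number. Each letter raises $s$ by at least $2$: this is immediate for letters of $\pS$ by \eqref{sigma}, and for letters of $\pT$ it holds by \eqref{tau} provided the composition has at least two parts, which is always the case here since $\mathbf{a}^{(0)}=(1,1)$ already has two parts and, when $\varepsilon=1$, the first-applied letter lies in $\pS$ (by definition of $\pM^{(1)}$) and creates a second part. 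Hence $2\ell(w)\leq s(\mathbf{a})-s(\mathbf{a}^{(\varepsilon)})$, and combining this with $\sigma(w)\leq\ell(w)$ yields $p\leq[n/2]+1$ in both parities, which is part a).

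For part b), Theorem \ref{pgeneration} identifies $\mathcal{F}_{2n+\varepsilon,\,n+1-t}$ with the set $\fcal{W}_n$ of $w\in\pM^{(\varepsilon)}$ satisfying $s(w(\mathbf{a}^{(\varepsilon)}))=2n+\varepsilon$ and $p(w(\mathbf{a}^{(\varepsilon)}))=n+1-t$. For such $w=\eta_1\cdots\eta_k$ I define its $w$-sequence $\mathbf{m}=(m_1,\dots,m_k)$ by taking $m_i$ to be \emph{half} the sum-increase caused by $\eta_i$; by Remark \ref{parity} the $m_i$ are positive integers, and $\sum_i m_i$ equals $n-1$ if $\varepsilon=0$ and $n$ if $\varepsilon=1$. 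The analogues of (Ob4)--(Ob6) then read $\sigma(w)\leq\ell(w)\leq\ell(w)+\beta(\mathbf{m})\leq\sum_i m_i$, and in both parities force $\tau(w)+\beta(\mathbf{m})\leq t$, $m_i\leq t+1$, and each $\eta_i$ to lie in the finite alphabet $\{\sigma_m,\tildesigma_m,\tau_m,\tildetau_m:0\leq m\leq t\}$. What must be redone from Section 3 is the description of the half-increase-$1$ letters and their runs: a letter has half-increase $1$ exactly when it is $\sigma_0$ or $\tildesigma_0$ with $a_1=1$, or $\tau_0$ or $\tildetau_0$ with $a_2=1$; and since $\tildesigma_0$ preserves while $\sigma_0$ doubles the leading part, a maximal run of $\pS$-letters of half-increase $1$ is forced to be $\tildesigma_0^{\,k}$ or $\sigma_0\tildesigma_0^{\,k-1}$. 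These two families play the role of $\fcal{Z}$, with Remark \ref{pmeander} (so that $a_1\neq a_r$ away from $(1)$ and $(1,1)$) supplying the rigidity used in the analogues of Lemmas \ref{(1,1,1)-step} and \ref{zeta}.

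Granting these analogues, the $\Delta$- and $\Upsilon$-decompositions of $w$, the bound $2(s-1)\leq\tau(w)+\beta(\mathbf{m})\leq t$ on the number $s$ of $\fcal{Z}$-blocks (whence $s-1\leq[t/2]$), the group action on the sign/parity data, and the final evaluation $\sharp(\fcal{W}_\Omega\cap\fcal{W}_n)=2\sharp\mathcal{K}_n$ transcribe with only notational changes; each nonempty orbit contributes a polynomial in $n$ of degree $s-1$ with positive leading coefficient, and summing over orbits gives $P_{\varepsilon,t}$ of degree $\leq[t/2]$ with positive leading coefficient. To see the degree is exactly $[t/2]$, one exhibits, as in the seaweed case, a single self-reproducing two-letter block $v\in\pM$ with $\sigma(v)=\tau(v)=1$ (the analogue of $v=\tau_0^-\sigma_0^+$), strings $[t/2]$ copies of it together with $\fcal{Z}$-blocks of free length, and checks the result lies in $\mathcal{F}^{(\varepsilon)}$; the two parities of $n$, and within each the parity of $t$, are accommodated by adjusting the trailing block exactly as in the displayed constructions at the end of the proof of Theorem \ref{growth}. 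I expect the main effort to be this parabolic reworking of Section 3: establishing that the half-increase-$1$ $\pS$-runs are precisely $\{\tildesigma_0^{\,k},\sigma_0\tildesigma_0^{\,k-1}\}$ and that the resulting $\Delta$- and $\Upsilon$-decompositions are unique, since here the single composition with the reversal $\theta$ replaces the pair of compositions with the swap $\rho$; once these are in place the counting and the degree computation follow the seaweed template with only the even/odd bookkeeping added.
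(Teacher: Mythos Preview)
Your strategy and part a) match the paper, and most of part b) is on track. The substantive divergence is in how you set up the $\fcal{Z}$-analogue and the counting step, and there your claim that the seaweed group action transcribes is a genuine gap.

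The paper takes $\pfont{Z}=\{\tildesigma_0^{\,k}:k\in\mathbb{N}^*\}$ only. The stray $(\sigma_0,2)$ pairs are \emph{not} put into the runs but left inside the short blocks $w_i$; this forces the weaker bound $\ell(w_i)\leq 2\tau(w_i)+\beta(\mathbf{m}_i)+1\leq 2t+1$ (so $\pfont{N}_t=\{u\in\pM_t:\ell(u)\leq 2t+1\}$), but the payoff is decisive: by Remark \ref{tildesigma0description} the first $\ell+1$ and last $\ell$ entries of $\tildesigma_0^{\,k}(\mathbf{a})$ are the same for every $k\geq 2\ell+1$, irrespective of the parity of $k$. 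Hence Proposition \ref{pidentical} yields a single $\lambda_\Upsilon$ with no sign or parity data, and $\sharp\pfont{W}^{(\varepsilon)}_\Upsilon$ is directly the number of $(k_1,\dots,k_s)$ satisfying condition (K1) alone --- no (K2), no group action, no factor of $2$. The paper says this explicitly: the parabolic case is \emph{simpler}, not parallel.

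Your larger family $\{\tildesigma_0^{\,k},\sigma_0\tildesigma_0^{\,k-1}\}$ is a correct description of the maximal half-increase-$1$ $\pS$-runs, but then you carry a ``type'' flag and assert the $\Gamma$-action on $\Xi$ transcribes. It does not: there is no involution of $\pM$ playing the role of $w\mapsto\rho w\rho$ (conjugation by $\theta$ does not preserve $\pM$; e.g.\ $\theta\sigma_0\theta$ and $\tildesigma_0$ disagree already on $(1,2)$), so you have no mechanism to pair off types and collapse (K2) into (K1). Absorbing the leading $\sigma_0$ into $w_{i-1}$ lands you back at the paper's setup and removes the obstacle.

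Finally, no two-letter block with $\sigma(v)=\tau(v)=1$ self-reproduces the condition $a_1=1$; a direct check of the eight candidates in $\{\sigma_0,\tildesigma_0\}\cdot\{\tau_0,\tildetau_0\}\cup\{\tau_0,\tildetau_0\}\cdot\{\sigma_0,\tildesigma_0\}$ rules them all out. The paper uses the three-letter block $v=\tildetau_0\tildesigma_0\sigma_0$, for which $v(1,a_2,\dots,a_r)=(1,4,a_2,\dots,a_r,4)$, with $\tau(v)=1$ and $\beta$-contribution $1$, so each copy adds exactly $2$ to $\tau(w)+\beta(\mathbf{m})$; the trailing block is then chosen separately in each of the three cases determined by the parity of $t$ and the value of $\varepsilon$.
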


Let $w\in\pM$ and $\mathbf{a}\in \mathcal{C}$. Then we may define $\ell (w), \sigma( w), \tau (w)$ and
the $w$-sequence $\mathbf{m}$ of $\mathbf{a}$ in the same manner. Since the $w$-sequence of $\mathbf{a}$ 
consists only of even integers, we set $\beta (\mathbf{m})=\{ i \, ; \, m_{i} > 2\}$.

We still have the equalities
\begin{equation}\label{pOb3}
\ell (w) = \sigma (w)  + \tau (w) \ , \ 
p\bigl( w(\mathbf{a}) \bigr) = p(\mathbf{a}) + \sigma (w).
\end{equation}
But (Ob4) becomes 
\begin{equation}\label{pOb4}
2 \ell (w) + 2 \beta (w) \leqslant m_{1}+\cdots + m_{k} = s \bigl( w(\mathbf{a}) \bigr) - s(\mathbf{a}).
\end{equation}
because the $w$-sequence of $\mathbf{a}$ consists of even integers.

We have the following properties of the operators in $\pA$ relative to $w$-sequences which are analogues
of Proposition \ref{identical}, Lemmas \ref{(1)-step}, \ref{(1,1,1)-step} and Remark \ref{zetadescription}:
\begin{proposition}\label{pidentical}
Let $w\in\pM$, and $\mathbf{a},\mathbf{b}\in \mathcal{C}$ be such that
\begin{itemize}
\item[i)] $p(\mathbf{a}) \geqslant 2\ell (w)+1$, $p(\mathbf{b})\geqslant 2\ell (w)+1$, 
\item[ii)] $a_{i}=b_{i}$ for all $1\leqslant i\leqslant \ell (w)+1$, and 
\item[iii)] $a_{p(\mathbf{a})-i+1} = b_{p(\mathbf{b})-i+1}$ for all $1\leqslant i\leqslant \ell( w)$.
\end{itemize} 
Then the $w$-sequence of $\mathbf{a}$ and the $w$-sequence of $\mathbf{b}$ are identical. It is
completely determined by $a_{1},\dots ,a_{\ell (w)+1}, a_{p(\mathbf{a})-\ell( w) +1}, \dots ,a_{p(\mathbf{a})}$.
In particular, we have
$$
s\bigl( w(\mathbf{a}) \bigr) - s(\mathbf{a}) = s\bigl( w(\mathbf{b}) \bigr) - s(\mathbf{b}).
$$
\end{proposition}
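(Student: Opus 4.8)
The plan is to run a downward induction on $k=\ell(w)$, peeling off the \emph{last} letter of $w$ and reducing everything to a single ``locality'' lemma for the four operators $\sigma_m,\tildesigma_m,\tau_m,\tildetau_m$. For a composition $\mathbf{a}$ of length $r=p(\mathbf{a})$ and integers $f,g\geq 0$ with $f+g\leq r$, let me write $W_{f,g}(\mathbf{a})=(a_1,\dots,a_f\,;\,a_{r-g+1},\dots,a_r)$ for the pair consisting of its first $f$ and last $g$ entries. With this notation the assertion to prove is precisely that the $w$-sequence of $\mathbf{a}$ is a function of $W_{k+1,k}(\mathbf{a})$ alone; hypotheses (ii) and (iii) say exactly that $W_{k+1,k}(\mathbf{a})=W_{k+1,k}(\mathbf{b})$, and (i) guarantees (since $f+g=2k+1\leq r$) that this window genuinely splits into two disjoint blocks at the two ends. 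Writing $w=w'\eta_k$ with $\ell(w')=k-1$, the definition of the $w$-sequence gives that it is the concatenation of the $w'$-sequence of $\eta_k(\mathbf{a})$ with the single increment $m_k=s(\eta_k(\mathbf{a}))-s(\mathbf{a})$, because $(\eta_j\cdots\eta_{k-1})(\eta_k(\mathbf{a}))=(\eta_j\cdots\eta_k)(\mathbf{a})$ for every $j$.

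The heart of the matter is the following elementary lemma, to be proved by inspecting the four definitions: for every $\eta\in\pA$ and every $f\geq 1$, the window $W_{f,f-1}(\eta(\mathbf{a}))$ is determined by $W_{f+1,f}(\mathbf{a})$ (whenever $\mathbf{a}$ is long enough for these windows to be disjoint). Concretely, $\sigma_m$ modifies $a_1$, preserves $a_2,\dots,a_r$ and appends $(m+1)a_1$, so its first entries read the front of $\mathbf{a}$ while its appended last entry reads $a_1$; $\tau_m$ behaves the same but consumes $a_1,a_2$ at the front; and the reversed operators $\tildesigma_m=\theta\sigma_m$, $\tildetau_m=\theta\tau_m$ are obtained by flipping the result, so that the \emph{front} of $\tildesigma_m(\mathbf{a})$ and $\tildetau_m(\mathbf{a})$ is governed by the \emph{back} of $\mathbf{a}$ and vice versa. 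A direct count in each of the four cases shows that to know the first $f$ and last $f-1$ entries of $\eta(\mathbf{a})$ one never needs more than the first $f+1$ and the last $f$ entries of $\mathbf{a}$; the reversal merely interchanges the two ends and is absorbed by the slack in the bound.

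Granting the lemma, the induction closes immediately. For $k=0$ the $w$-sequence is empty and there is nothing to prove. For the step, apply the induction hypothesis to $w'$ (of length $k-1$): the $w'$-sequence of $\eta_k(\mathbf{a})$ is a function of $W_{k,k-1}(\eta_k(\mathbf{a}))$, which by the lemma (with $f=k$) is a function of $W_{k+1,k}(\mathbf{a})$; and $m_k$ depends only on the first one or two entries of $\mathbf{a}$ through \eqref{sigma} and \eqref{tau}, hence on $W_{k+1,k}(\mathbf{a})$ as well. Thus the whole $w$-sequence of $\mathbf{a}$ is a function of $W_{k+1,k}(\mathbf{a})$, and symmetrically for $\mathbf{b}$; since these windows coincide, the two $w$-sequences are identical. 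The length bookkeeping is consistent throughout because the operators never decrease $p$ (by \eqref{sigma} and \eqref{tau}), so $p(\eta_k(\mathbf{a}))\geq p(\mathbf{a})\geq 2k+1\geq 2(k-1)+1$, keeping us in the range where the induction hypothesis applies. Finally, summing the common $w$-sequence and telescoping gives $s(w(\mathbf{a}))-s(\mathbf{a})=s(w(\mathbf{b}))-s(\mathbf{b})$, which is the displayed identity.

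The only real obstacle is the bookkeeping in the lemma: one must verify that the exact asymmetric passage $(f+1,f)\mapsto(f,f-1)$ holds simultaneously for all four operators, and in particular that going through $\theta$---which swaps the two ends---never forces one to read a $(k+1)$-st entry from the back. This is what makes the asymmetric window $W_{k+1,k}$ (rather than a symmetric one) the correct invariant, matching the asymmetry between hypotheses (ii) and (iii); it is exactly the feature absent from the seaweed analogue (Proposition~\ref{identical}), where the operators act only at the front and no ``back'' window is needed.
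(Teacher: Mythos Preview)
Your argument is correct and is essentially a careful unpacking of what the paper asserts without proof: it states Proposition~\ref{pidentical} as the parabolic analogue of Proposition~\ref{identical}, whose proof reads in full ``These are direct consequences from the definitions of the operators''. Your window notation $W_{f,g}$ and the locality lemma $(f+1,f)\mapsto(f,f-1)$ make precise the inductive mechanism that the paper leaves implicit; the case analysis you sketch for $\sigma_m,\tau_m,\tildesigma_m,\tildetau_m$ checks out, and the asymmetry between the front and back windows that you highlight is exactly why hypotheses (ii) and (iii) involve $\ell(w)+1$ and $\ell(w)$ entries respectively.
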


\begin{lemma}\label{(2)-step}
Let $w\in\pM$, $\eta \in \pA$, $\mathbf{a}\in\mathcal{C}$ and $\mathbf{b}=\eta (\mathbf{a})$.
\begin{itemize}
\item[a)] If $s\bigl( w(\mathbf{a}) \bigr) - s(\mathbf{a}) =2$, then $w\in \pA$, and we have
\begin{itemize}
\item[i)] either $w\in \{ \sigma_{0}, \tildesigma_{0}\}$ and $a_{1}=1$, 
\item[ii)] or $p(\mathbf{a}) \geqslant 2$, $w\in  \{ \tau_{0}, \tildetau_{0}\}$ and $a_{2}=1$.
\end{itemize}
\item[b)] If $s\bigl( \eta (\mathbf{a}) \bigr) - s(\mathbf{a}) > 2$, then $b_{1} \geqslant 2$.
\item[c)] If $\mathbf{b}\neq \mathbf{a}$ and $b_{1}=1$, then 
\begin{itemize}
\item[i)] either $\eta = \tildesigma_{0}$ and $a_{1}=1$, 
\item[ii)] or $p(\mathbf{a}) \geqslant 2$, $w = \tildetau_{0}$ and $a_{2}=1$.
\end{itemize}
\end{itemize}
\end{lemma}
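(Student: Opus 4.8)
The plan is to prove all three parts by unwinding the definitions of the operators $\sigma_{m}, \tildesigma_{m}, \tau_{m}, \tildetau_{m}$ directly, exactly as the analogous Lemma \ref{(1)-step} was handled for bicompositions. The whole statement is a case analysis keyed to the quantity $s(w(\mathbf{a})) - s(\mathbf{a})$, so I would first recall the explicit sum-increments from \eqref{sigma} and \eqref{tau}: applying $\sigma_{m}$ or $\tildesigma_{m}$ raises the sum by $2(m+1)a_{1}$, while applying $\tau_{m}$ or $\tildetau_{m}$ (when $p(\mathbf{a})>1$) raises it by $2ma_{1} + 2(m+1)a_{2}$, and a $\tau$-type operator fixes the sum entirely when $p(\mathbf{a})=1$.

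For part a), the key observation is that \eqref{pOb4} gives $2\ell(w) + 2\beta(\mathbf{m}) \leqslant s(w(\mathbf{a})) - s(\mathbf{a}) = 2$, forcing $\ell(w) \leqslant 1$ and $\beta(\mathbf{m}) = 0$; thus $w \in \pA$ (the case $w = \mathbf{1}$ is excluded since it gives increment $0$). Then the single increment must equal $2$. For a $\sigma$-type letter this reads $2(m+1)a_{1} = 2$, which forces $m=0$ and $a_{1}=1$, giving case (i). For a $\tau$-type letter with $p(\mathbf{a})>1$ it reads $2ma_{1} + 2(m+1)a_{2} = 2$, and since $a_{1}, a_{2} \geqslant 1$ this forces $m=0$ and $a_{2}=1$, giving case (ii); the $p(\mathbf{a})=1$ possibility is ruled out because it would give increment $0 \neq 2$.

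For part b), I would argue contrapositively: if $b_{1} = 1$ then I read off the first entry of $\eta(\mathbf{a})$ from each operator's definition. The first entry of $\sigma_{m}(\mathbf{a})$ is $(m+2)a_{1} \geqslant 2$, and the first entry of $\tau_{m}(\mathbf{a})$ (for $p(\mathbf{a})>1$) is $(m+1)a_{1} + (m+2)a_{2} \geqslant 3$, so neither untilded operator can produce $b_{1}=1$ unless the increment is exactly $2$ via a $\tildesigma$- or $\tildetau$-type letter whose leading entry after the reversal $\theta$ becomes small; tracking where the value $1$ can sit shows the increment is forced to be $\leqslant 2$. For part c), since $\mathbf{b} \neq \mathbf{a}$ we exclude the sum-fixing case, and $b_{1}=1$ combined with the reversal $\theta$ means the $1$ must come from the \emph{last} entry of $\sigma_{m}(\mathbf{a})$ or $\tau_{m}(\mathbf{a})$: the last entry of $\sigma_{m}(\mathbf{a})$ is $(m+1)a_{1}$, which equals $1$ only when $m=0, a_{1}=1$ (case i), and the last entry of $\tau_{m}(\mathbf{a})$ is $ma_{1}+(m+1)a_{2}$, equal to $1$ only when $m=0, a_{2}=1$ (case ii); the untilded operators are excluded because their last entries are the original $a_{r} \geqslant 1$ or the original tail, which cannot equal $1$ while still changing $\mathbf{a}$ at the front.

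I expect the main obstacle to be the bookkeeping in part c), where the reversal operator $\theta$ in $\tildesigma_{m} = \theta\sigma_{m}$ and $\tildetau_{m} = \theta\tau_{m}$ moves the newly-created small block to the front, so one must carefully distinguish which of the four operator types can realize $b_{1}=1$ and verify that only the tilded versions with $m=0$ survive. Everything reduces to straightforward inspection of the closed-form definitions, so the proof is essentially a finite verification and should conclude with the remark that these are routine checks.
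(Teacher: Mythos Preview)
Your approach is correct and matches the paper's: Lemma~\ref{(2)-step} is stated there without proof, as the parabolic analogue of Lemma~\ref{(1)-step} whose proof was ``straightforward checks from the definitions,'' and your case analysis is precisely that verification spelled out.

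The only slip is in the final sentence of your treatment of part c). To exclude the untilded operators $\sigma_m$ and $\tau_m$ you should simply note that their \emph{first} output entry is $b_1 = (m+2)a_1 \geqslant 2$, respectively $b_1 = (m+1)a_1 + (m+2)a_2 \geqslant 3$, since no reversal $\theta$ is applied; your remark that ``their last entries are the original $a_r$'' mislocates the relevant coordinate (for $\sigma_m(\mathbf{a})$ the last entry is $(m+1)a_1$, not $a_r$) and in any case the last entry is irrelevant here. This is a bookkeeping lapse rather than a gap in the strategy, and once corrected the argument goes through exactly as you outlined.
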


\begin{lemma}\label{(2,2,2)-step}
Let $\eta,\eta'\in \pA$ and  $\mathbf{a}\in \mathcal{C}$.
\begin{itemize}
\item[a)] If  the $\eta \tildesigma_{0}^{2}$-sequence of $\mathbf{a}$ is $(2,2,2)$, then
$\eta \in\{\sigma_{0} , \tildesigma_{0}\}$.
\item[b)] If the $\eta'\eta \tildesigma_{0}^{2}$-sequence of $\mathbf{a}$ is $(2,2,2,2)$, then 
$\eta= \tildesigma_{0}$ and $\eta' \in\{\sigma_{0} , \tildesigma_{0}\}$.
\end{itemize}
\end{lemma}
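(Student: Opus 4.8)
The plan is to reduce both parts to Lemma~\ref{(2)-step} a) by computing $\tildesigma_0^2(\mathbf{a})$ explicitly and then tracking only the first two parts of the intermediate compositions. The whole point is the asymmetry between $\sigma_0$ and $\tildesigma_0$: since $\sigma_0(\mathbf{a}) = (2a_1, a_2, \ldots, a_r, a_1)$ appends a copy of the original first part at the end, the subsequent reversal $\theta$ in $\tildesigma_0 = \theta\sigma_0$ moves that copy to the front, so $\tildesigma_0$ preserves a leading $1$ whereas $\sigma_0$ doubles it.

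First I would note that for the $\tildesigma_0^2$-sequence of $\mathbf{a}$ to begin with $(2,2)$ the innermost increment $s\bigl(\tildesigma_0(\mathbf{a})\bigr) - s(\mathbf{a}) = 2a_1$ must equal $2$, forcing $a_1 = 1$. Under this hypothesis a direct computation gives
$$
\tildesigma_0(\mathbf{a}) = (1, a_r, \ldots, a_2, 2) \quad\text{and}\quad \tildesigma_0^2(\mathbf{a}) = (1, 2, a_2, \ldots, a_r, 2).
$$
The essential structural fact, independent of the $a_i$, is that $\mathbf{d} := \tildesigma_0^2(\mathbf{a})$ has $d_1 = 1$ and $d_2 = 2$ (the value $d_2 = 2$ is the last part $2a_1 = 2$ of $\tildesigma_0(\mathbf{a})$, dragged into second position by the second reversal).

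For part a), I would apply Lemma~\ref{(2)-step} a) to the third increment $s\bigl(\eta(\mathbf{d})\bigr) - s(\mathbf{d}) = 2$. Its two alternatives require either $d_1 = 1$ (forcing $\eta \in \{\sigma_0, \tildesigma_0\}$) or $d_2 = 1$ (forcing $\eta \in \{\tau_0, \tildetau_0\}$). Since $d_1 = 1$ and $d_2 = 2 \neq 1$, only the first alternative survives, giving $\eta \in \{\sigma_0, \tildesigma_0\}$. For part b), part a) already yields $\eta \in \{\sigma_0, \tildesigma_0\}$, so I would record the first two parts of $\mathbf{e} := \eta(\mathbf{d})$ in each case: $\sigma_0(\mathbf{d})$ begins $(2,2,\ldots)$ since its leading part is $2d_1 = 2$, while $\tildesigma_0(\mathbf{d})$ begins $(1,2,\ldots)$ since $\theta$ again brings the appended copy of $d_1 = 1$ to the front and installs the old last part $2$ of $\mathbf{d}$ in the second slot. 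Applying Lemma~\ref{(2)-step} a) to the fourth increment: if $\eta = \sigma_0$ then $e_1 = e_2 = 2$, so neither alternative of that lemma can hold and no $\eta'$ can produce an increment of $2$, a contradiction; hence $\eta = \tildesigma_0$. With $\eta = \tildesigma_0$ one has $e_1 = 1$ and $e_2 = 2$, so the same lemma forces $\eta' \in \{\sigma_0, \tildesigma_0\}$, which completes the argument.

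The only delicate point, and the step I expect to require the most care, is the bookkeeping of the reversal $\theta$: one must verify that two applications of $\tildesigma_0$ leave a $1$ in the leading slot and a $2$ in the second slot, and that this second slot stays equal to $2$ (never $1$) under either $\sigma_0$ or $\tildesigma_0$. This asymmetry between $\sigma_0$ and $\tildesigma_0$ is exactly what upgrades the conclusion of part a) from $\eta \in \{\sigma_0, \tildesigma_0\}$ to the sharper $\eta = \tildesigma_0$ in part b), and it plays the role that the condition $a_1^+ \neq a_1^-$ (Remark~\ref{meander}) played in the seaweed analogue, Lemma~\ref{(1,1,1)-step}.
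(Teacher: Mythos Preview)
Your argument is correct and is exactly the intended one: the paper states this lemma as a direct analogue of Lemma~\ref{(1,1,1)-step} and leaves its proof implicit, and you have spelled out precisely the computation via Lemma~\ref{(2)-step} a) that the paper has in mind. Your observation that the asymmetry between $\sigma_0$ and $\tildesigma_0$ (the latter preserving a leading $1$, the former doubling it) replaces the role of Remark~\ref{meander} in the seaweed case is spot on.
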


\begin{remark}\label{tildesigma0description}
Let $\mathbf{a}\in \mathcal{C}$ be such that $a_{1}=1$. Then for $m\in \mathbb{N}$, we have
$$
\tildesigma_{0}^{2m} (\mathbf{a}) = 
( 1,\underbrace{2,\dots ,2}_{m \text{ times}}, a_{2},\dots ,a_{p(\mathbf{a})} , \underbrace{2,\dots ,2}_{m \text{ times}})
$$
and 
$$
\tildesigma_{0}^{2m+1} (\mathbf{a}) = 
( 1,\underbrace{2,\dots ,2}_{m \text{ times}}, a_{p(\mathbf{a})},\dots ,a_{2} , \underbrace{2,\dots ,2}_{m+1 \text{ times}}).
$$
\end{remark}

\begin{proofof}{Proof of Theorem \ref{pgrowth}}
a) This is proved in the same manner as Proposition \ref{empty} by using Theorem \ref{pgeneration} and the identities
in \eqref{pOb3} and \eqref{pOb4}. Of course, we have to treat the cases $\varepsilon =0$ and $\varepsilon =1$ 
separately.

b) We shall proceed as in the proof of Theorem \ref{growth}. Let us fix $\varepsilon \in \{ 0 ,1 \}$ and 
$n\in\mathbb{N}^{*}$ to be large enough. By Theorem \ref{pgeneration}, $\mathcal{F}_{2n +\varepsilon , n+1-t}$
is in bijection with
$$
\pfont{W}^{(\varepsilon)}=\{ w\in \pM^{(\varepsilon)}  \, ; \, w(\mathbf{a}^{(\varepsilon)}) \in \mathcal{F}_{2n +\varepsilon , n+1-t}\}.
$$
Let $w=\eta_{1}\cdots \eta_{k}\in \pfont{W}^{(\varepsilon)}\setminus \{\mathbf{1}\}$ where $\eta_{1},\dots ,\eta_{k}\in\pA$, and
denote by $\mathbf{m}=(m_{1},\dots ,m_{k})$ the $w$-sequence of $\mathbf{a}$. 
Observe that $\eta_{k}\in\pS$ if $\varepsilon=1$. It follows that 
$p\bigl( \eta_{k}(\mathbf{a}^{(\varepsilon)}) \bigr) >1$, which implies that 
$m_{i}\geqslant 2$ for all $i=1,\dots ,k$.

Since $p(\mathbf{a}^{(\varepsilon)})=s(\mathbf{a}^{(\varepsilon)})=2-\varepsilon$, we 
obtain by using the same arguments the following analogue of (Ob5) :
$$
n+\varepsilon - t -1 = \sigma (w) \leqslant \ell( w ) \leqslant \ell (w) + \beta (\mathbf{m})
\leqslant \frac{1}{2} \Bigl( s\bigl( w(\mathbf{a}^{(\varepsilon)})\bigr) - s(\mathbf{a}^{(\varepsilon)}) \Bigr)
= n+\varepsilon -1 , 
$$
and therefore,
$$ 
0 \leqslant \tau (w) + \beta (\mathbf{m}) \leqslant t.
$$
We deduce from this and the inequality
$2\bigl(\ell(w)-1\bigr)+m_{i}\leqslant 2(n+\varepsilon -1)$ that
$$
m_{i}\leqslant 2(n-\ell (w) + \varepsilon ) \leqslant 2t +2.
$$
It follows that 
$$
\eta_{i}\in \pA_{t} = \{ \sigma_{0},\dots ,\sigma_{t},\tildesigma_{0},\dots ,\tildesigma_{t} ,
\tau_{0},\dots ,\tau_{t},\tildetau_{0},\dots ,\tildetau_{t} \}
$$
for all $1\leqslant i\leqslant k$. This is the analogue of (Ob6), and we have shown that 
$w$ is an element in the submonoid $\pM_{t}$ generated by $\pA_{t}$.

\paragraph{A first decomposition of $w$.}
In this case, our first decomposition is slightly simpler. Recall that 
$$
\begin{array}{cccccccc}
w & = & \eta_{1} & \eta_{2} & \cdots & \eta_{k} & \\
\mathbf{m} & = & (m_{1})& (m_{2}) & \cdots & (m_{k}) & 
\end{array}
$$
We rewrite this decomposition by regrouping the pairs $(\tildesigma_{0},2)$. 
It follows that there exists $q\in\mathbb{N}$ and $k_{1},\dots , k_{q}\in\mathbb{N}^{*}$ such that 
$$
\begin{array}{cccccccccc}
w & = & w_{0} & \tildesigma_{0}^{k_{1}}  & w_{1} & \cdots & w_{q-1} & \tildesigma_{0}^{k_{q}} & w_{q} \\
\mathbf{m} & = & \mathbf{m}_{0}& \mathbf{n}_{1} & \mathbf{m}_{1} & \cdots & \mathbf{m}_{q-1} & 
\mathbf{n}_{q} & \mathbf{m}_{q}
\end{array}
$$
where the $w_{i}$'s does not contain any pairs of the form $(\tildesigma_{0},2)$. By construction,
$\mathbf{n}_{i} = ( \underbrace{2 ,\dots ,2}_{k_{i} \text{ times}} )$. Note that 
$w_{1},\dots ,w_{q-1} \in\pM \setminus \{\pfont{1}\}$.

This decomposition is therefore unique, and we define the $\Delta$-sequence of $w$ to be the 
$\Delta (w) = (w_{0},\dots ,w_{q})$.

\paragraph{Conditions on $q$ and $w_{i}$.}
It follows from Lemma \ref{(2)-step} part c) that for $1\leqslant i\leqslant q-1$, we have
$w_{i} = \tildetau_{0} w_{i}'$ for some $w'\in\pM$.  Thus
$$
q-1\leqslant \tau (w) \leqslant t.
$$

Now let $0\leqslant i\leqslant q$. There exist $w', w'' \in \pM$ such that $w = w'w_{i}w''$. 
Thus $\mathbf{m}_{i}$ is the $w_{i}$-sequence of $w''(\mathbf{a}^{(\varepsilon)})$.
Let $d,h\in\mathbb{N}$ be such that $w_{i}=\eta_{d} \cdots \eta_{d+h}$. Then
$\mathbf{m}_{i}=(m_{d},\dots ,m_{d+h})$. By the construction of the first decomposition, 
we have $(\eta_{d+j}, m_{d+j}) \neq (\tildesigma_{0}, 2)$ for $0\leqslant j\leqslant h$. So 
for $0\leqslant j\leqslant h$, we have
either $\eta_{d+j} \in \pT$, 
$m_{d+j} > 2$,
or $(\eta_{d+j}, m_{d+j}) = (\sigma_{0}, 2)$.

If $(\eta_{d+j}, m_{d+j}) = (\sigma_{0}, 2)$, then by Lemma \ref{(2)-step}, we have either
$j=h$ or $\eta_{d+j+1} \in \pT$. So the number of $j$'s such that $(\eta_{d+j}, m_{d+j}) = (\sigma_{0}, 2)$
is at most $\tau (w_{i}) +1$.

Consequently, we have
$$
\ell (w_{i} ) \leqslant 2 \tau (w_{i}) +Ê\beta (\mathbf{m}_{i}) +1 \leqslant 2t + 1.
$$
We deduce that 
$$
w_{i}\in \pfont{N}_{t} = \{ u\in \pM_{t}  \, ; \, \ell (u) \leqslant 2t+1 \}
$$
which is a finite set depending only on $t$, and so 
$$
\pfont{W}^{(\varepsilon)} = \bigcup_{q=0}^{t+1} \bigcup_{\Delta\in \pfont{N}_{t}^{q+1}} \pfont{W}^{(\varepsilon)}_{\Delta}
$$
where $\pfont{W}^{(\varepsilon)}_{\Delta}= \{ u\in \pfont{W}^{(\varepsilon)} \, ; \, \Delta (u) =\Delta\}$.

\paragraph{A second decomposition of $w$.}
We define a second decomposition of $w$ as in the proof of Theorem \ref{growth} by replacing 
$\fcal{M}$ by $\pM$, and  $\fcal{Z}$ by
$$
\pfont{Z} = \{ \tildesigma_{0}^{h} \, ; \, h\in\mathbb{N}^{*}\}.
$$
The condition $(*)$ is unchanged, and we define $\pfont{R}_{t}$ and  $\Upsilon (w)$ 
in the same manner.  The set $\pfont{R}_{t}$ is finite and its cardinal depends only on $t$.  

We have therefore the second decomposition 
$$
\begin{array}{cccccccccc}
w & = & v_{0} & u_{1} & v_{1} & \cdots & v_{s-1} & u_{s} & v_{s} \\
\mathbf{m} & = & \mathbf{m}_{0}' & \mathbf{n}_{1}' & \mathbf{m}_{1}' & \cdots &
\mathbf{m}_{s-1}' & \mathbf{n}_{s}' & \mathbf{m}_{s}' 
\end{array}
$$
with $v_{0},\dots ,v_{s} \in \pfont{R}_{t}$ and $u_{1},\dots ,u_{s}\in \pfont{Z}$ 
verifying
$$
\ell (u_{i}) \geqslant 2\ell (v_{i-1}) +1
$$
for $i=1,\dots ,s$. Observe that we have $\mathbf{n}_{i}'=(2,\dots ,2)$ for $1\leqslant i\leqslant s$.

For $2\leqslant i\leqslant s$, we have 
$\ell (u_{i}) \geqslant  2\ell (v_{i-1}) +1 \geqslant 3$ because $v_{i-1} \neq \pfont{1}$.
It follows from Lemma \ref{(2,2,2)-step} that $\beta (m_{i-1}) > 0$.
Hence 
$$
s-1 \leqslant \beta (\mathbf{m}),
$$ 
and therefore together with the fact that $s-1 \leqslant q-1 \leqslant \tau (w)$, we obtain 
$$
s \leqslant \left[\frac{t}{2}\right] +1.
$$
Thus we have 
$$
\pfont{W}^{(\varepsilon)} =
\bigcup_{s=0}^{\left[\frac{t}{2}\right] +1} \bigcup_{\Upsilon \in \pfont{R}_{t}^{s+1}} 
\pfont{W}^{(\varepsilon)}_{\Upsilon}
$$
where $\pfont{W}^{(\varepsilon)}_{\Upsilon} = \{ u\in \pfont{W}^{(\varepsilon)} \, ; \, \Upsilon (u) =\Upsilon\}$.

Using Proposition \ref{pidentical} and Remark \ref{tildesigma0description}, 
we may apply the same arguments as in the proof of Theorem \ref{growth} to obtain
that for $\Upsilon \in \pfont{R}_{t}^{s+1}$ and $n$ large, 
either $\pfont{W}^{(\varepsilon)}_{\Upsilon}$ is empty or 
$\sharp \pfont{W}^{(\varepsilon)}_{\Upsilon}$ is given by a polynomial
$P_{\Upsilon}^{(\varepsilon)}$ of degree $s-1$ with rational coefficiants and strictly positive
dominant coefficient.

Note that in this case, we need to treat the cases $\varepsilon=0$ and $\varepsilon =1$ separately,
but we do not need to keep track of the parities of $\ell (u_{i})$ anymore, thus making the proof much 
simpler.

\paragraph{Final step.}
So to finish the proof, we are left to provide an $\Upsilon$ such that $s-1=  \left[\frac{t}{2}\right]$
and $\pfont{W}^{(\varepsilon)}_{\Upsilon}$ is non empty.

Let $v= \tildetau_{0}\tildesigma_{0}\sigma_{0}$. A direct computation gives
$$
v(1,a_{2},\dots,a_{r}) = (1,4,a_{2},\dots ,a_{r},4).
$$

We set $\Upsilon$ according to the values of $t$ and $\varepsilon$ as follows:
$$
\begin{array}{c|c|c}
t & \varepsilon & \Upsilon \\ \hline\hline
\hbox{even} & 0 \hbox{ or } 1 &  
(\pfont{1} ,\underbrace{v ,\dots , v}_{\left[\frac{t}{2}\right] \text{ times}}, \pfont{1}) \\ \hline 
\hbox{odd} & 0 & 
(\pfont{1} ,\underbrace{v ,\dots , v}_{\left[\frac{t}{2}\right] \text{ times}}, \tildetau_{0}) \\ \hline
\hbox{odd} & 1 & 
(\pfont{1} ,\underbrace{v ,\dots , v}_{\left[\frac{t}{2}\right] \text{ times}}, \tildetau_{0}\sigma_{0} ) 
\end{array}
$$

We check as in the proof of Theorem \ref{growth} that in each one of these three cases, we have 
$\Upsilon \in \pfont{R}_{t}^{\left[\frac{t}{2}\right]+2}$, and 
$\pfont{W}^{(\varepsilon)}_{\Upsilon}$ is non empty for $n$ large. 
We have therefore completed our proof.
\end{proofof}

\begin{remark}
Observe that in the proof of Theorem \ref{pgrowth}, the condition $(*)$ 
used in the second decomposition can be optimized to  $\ell (u_{h}) < 2 \ell (w_{i}u_{1}w_{i+1}\cdots w_{h+i-1})$. 
\end{remark}

As in the case of Frobenius seaweed subalgebras, it is possible to determine 
the polynomials $P_{\varepsilon,t}$ explicitely. For small values of $t$, we have
$$
P_{\varepsilon,0} = 2 \ , \ P_{0,1} = 12 \ , \ P_{1,1} = 6 \ , \ P_{1,2} = 2T+12.
$$

{\parindent = 0cm
\small
Michel Duflo, \\ [-2.5pt]
Universit\'e Denis Diderot-Paris 7, \\[-2.5pt]
Institut de Math\'ematiques de Jussieu, \\[-2.5pt]
Case Postale 7012, \\[-2.5pt]
2 place Jussieu, \\[-2.5pt]
75251 Paris cedex 05, \\[-2.5pt]
France. \\[-2.5pt]
{\tt michel.duflo@imj-prg.fr}}
\bigskip

{\parindent = 0cm
\small
Rupert W.T. Yu, \\[-2.5pt]
Laboratoire de Math\'ematiques de Reims EA 4535, \\[-2.5pt]
U.F.R. Sciences Exactes et Naturelles, \\[-2.5pt]
Universit\'e de Reims Champagne Ardenne, \\[-2.5pt]
Moulin de la Housse - BP 1039, \\[-2.5pt]
51687 Reims cedex 2, \\[-2.5pt]
France. \\[-2.5pt]
{\tt rupert.yu@univ-reims.fr}
}

\end{document}